\newcommand{\eps}{\varepsilon}
\newcommand{\CC}{\mathbb{C}}
\newcommand{\HH}{\mathbb{H}}
\newcommand{\RR}{\mathbb{R}}
\newcommand{\ZZ}{\mathbb{Z}}
\newcommand{\NN}{\mathbb{N}}
\newcommand{\Aa}{\mathcal{A}}
\newcommand{\bN}{\mathbf{N}}
\newcommand{\bH}{\mathbf{I}}
\newcommand{\bF}{\mathcal{F}}
\newcommand{\SL}{{\rm SL}}
\newcommand{\PSL}{{\rm PSL}}
\newcommand{\im}{\operatorname{Im}}
\newcommand{\re}{\operatorname{Re}}
\newcommand{\sgn}{\mathrm{sgn}}
\newcommand*\smat[4]{\begin{smallmatrix}#1&#2\\#3&#4\end{smallmatrix}}
\newcommand*\pmat[4]{\begin{pmatrix}#1&#2\\#3&#4\end{pmatrix}}
\newcommand{\ol}[1]{\overline{#1}}
\newcommand{\Mod}[1]{\ (\text{mod}\ #1)}
\newcommand{\change}[1]{{#1}}
\newcommand{\sm}{\smallsetminus}
\renewcommand{\phi}{\varphi}
\newtheorem*{theoremA}{Theorem A}
\newtheorem{theorem}{Theorem}[section]
\newtheorem{proposition}{Proposition}[section]
\newtheorem{lemma}{Lemma}[section]
\newtheorem{corollary}{Corollary}[section]
\theoremstyle{definition}
\newtheorem*{remark}{Remark}
\numberwithin{equation}{section}
\numberwithin{figure}{section}
\theoremstyle{plain}
\title[Fourier interpolation with zeros of zeta and $L$-functions]{Fourier interpolation with \\ zeros of zeta and $L$-functions}
\author[Andriy Bondarenko]{Andriy Bondarenko}
\address{Department of Mathematical Sciences \\ Norwegian University of Science and Technology \\ NO-7491 Trondheim \\ Norway}
\email{andriybond@gmail.com}
\author[Danylo Radchenko]{Danylo Radchenko}
\address{ETH Zurich, Mathematics Department, Z\"urich 8092, Switzerland}
\email{danradchenko@gmail.com}
\author[Kristian Seip]{Kristian Seip}
\address{Department of Mathematical Sciences \\ Norwegian University of Science and Technology \\ NO-7491 Trondheim \\ Norway}
\email{kristian.seip@ntnu.no}
\thanks{Bondarenko and Seip were supported in part by Grant 275113 of the Research Council of Norway.}
\subjclass[2010]{11M06, 11F37, 42A38}
\date{}
\begin{document}
\maketitle
\begin{abstract}
We construct a large family of Fourier interpolation bases for functions analytic in a strip symmetric about the real line. Interesting examples involve the nontrivial zeros of the Riemann zeta function and other $L$-functions. We establish a duality principle for Fourier interpolation bases in terms of certain kernels of general Dirichlet series with variable coefficients. Such kernels admit meromorphic continuation, with poles at a sequence dual to the sequence of frequencies of the Dirichlet series, and they satisfy a functional equation. Our construction of concrete bases relies on a strengthening of Knopp's abundance principle for Dirichlet series with functional equations and a careful analysis of the associated Dirichlet series kernel, with coefficients arising from certain modular integrals for the theta group.
\end{abstract}

\tableofcontents

\section{Introduction}
The Riemann--Weil explicit formula (sometimes also called the Guinand--Weil
explicit formula) expresses the familiar duality between the prime numbers and
the nontrivial zeros of the Riemann zeta function $\zeta(s)$ in the following
compelling way:
    \begin{align} \frac{1}{2\pi} \int_{-\infty}^{\infty} f(t) & \left(\frac{\Gamma'(1/4+it/2)}{\Gamma(1/4+it/2)} - \log \pi \right)dt+f\Big(\frac{i}{2}\Big)+f\Big(\frac{-i}{2}\Big) \nonumber \\ \label{eq:RW}
     & = \frac{1}{2\pi} \sum_{n=1}^{\infty} \frac{\Lambda(n)}{\sqrt{n}} \left(\widehat{f}\Big(\frac{\log n}{2\pi}\Big)+\widehat{f}\Big(-\frac{\log n}{2\pi}\Big) \right)+  \sum_{\rho} f\left(\frac{\rho-1/2}{i}\right).
     \end{align}
Here $f(z)$ is a function analytic in the strip $|\im\, z|<1/2+\eps$ for
some $\eps>0$, $|f(z)|\ll (1+|z|)^{-1-\delta} $ for some $\delta>0$ when
$|\re\, z| \to \infty$, and
    \[ \widehat{f}(\xi)\coloneqq
    \int_{-\infty}^{\infty} f(x) e^{-2\pi i x \xi} dx ;\]
$\Lambda(n)$ is the von Mangoldt function defined to be $\log p$ if $n=p^k$, $p$ a prime and $k\ge 1$, and zero otherwise, while the second sum in \eqref{eq:RW} runs over the nontrivial zeros $\rho$ of $\zeta(s)$ (counting multiplicities in the usual way). The Riemann--Weil formula generalizes the classical Riemann--von Mangoldt explicit formula \cite[Ch. 17]{D} and arose to prominence from Weil's work \cite{W}, in which it appeared in a considerably more general form.

Our nontraditional emplacement of the two series in \eqref{eq:RW} on one side of the equation is made to connect the Riemann--Weil formula to the object of study of this paper, the prototype of which is another Fourier duality relation involving the nontrivial zeros of~$\zeta(s)$. We follow the convention of denoting these zeros  by $\rho=\beta+i\gamma$, but instead of accounting for multiple zeros (if any) in the usual way, we associate with each $\rho$ the multiplicity $m(\rho)$ of the zero of $\zeta(s)$ at $\rho$. We let $\mathcal{H}_1$ denote the space of functions $f(z)$ that are analytic in the  strip $|\im\, z|<1/2+\eps$ and satisfy the integrability condition
	\[ \sup_{|y|<1/2+\eps}
	   \int_{-\infty}^{\infty} |f(x+iy)| (1+|x|) dx < \infty \]
for some $\eps>0$. Functions $h(z)$ on $\mathbb{C}$ with the property that
	\[ h(x+iy) \ll_{y,l} (1+|x|)^{-l} \]
for every real $y$ and positive $l$ will be said to be rapidly decaying.
To simplify matters, we state our result only for even functions.

\begin{theorem}\label{thm:zeta}
There exist two sequences of rapidly decaying and even entire functions $U_n(z)$, $n=1,2,...$, and $V_{\rho,j}(z)$, $0\le j < m(\rho)$, with $\rho$ ranging over the nontrivial zeros of $\zeta(s)$ with positive
imaginary part, such that for every even function $f$ in $\mathcal{H}_1$
and every $z=x+iy$ in the strip $|y|<1/2$ we have
    \begin{equation} \label{eq:add} f(z)=\sum_{n=1}^{\infty} \widehat{f}\left(\frac{\log n}{4\pi}\right) U_n(z)+\lim_{k\to \infty} \sum_{0<\gamma\le T_k}
    \sum_{j=0}^{m(\rho)-1} f^{(j)}\left(\frac{\rho-1/2}{i}\right) V_{\rho,j}(z)   \end{equation}
for some increasing sequence of positive numbers $T_1, T_2, \dots$ tending to $\infty$ that does not depend on neither $f$ nor on $z$. Moreover, the functions $U_n(z)$ and $V_{\rho,j}(z)$ enjoy the following interpolatory properties:
    \begin{equation} \label{eq:interp} \begin{array}{rclrcl} U_n^{(j)}\left(\frac{\rho-1/2}{i}\right) & = & 0,   & \widehat{U}_n\left(\frac{\log n'}{4\pi}\right)& =& \delta_{n,n'} ,\\
    V^{(j')}_{\rho,j}\left(\frac{\rho'-1/2}{i}\right)& = & \delta_{(\rho,j),(\rho',j')},  & \widehat{V}_{\rho,j}\left(\frac{\log n}{4\pi}\right)& = &0, \end{array}  \end{equation}
with $\rho, \rho'$ ranging over the nontrivial zeros of $\zeta(s)$
with positive imaginary part, $j,j'$ over all nonnegative integers less than
or equal to respectively $m(\rho)-1, m(\rho')-1$, and $n, n'$ over all
positive integers.
\end{theorem}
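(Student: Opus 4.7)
My plan is to deduce Theorem~\ref{thm:zeta} from the duality principle announced in the abstract, which translates the construction of an interpolation basis of the form \eqref{eq:add}--\eqref{eq:interp} into the construction of a single two-variable kernel $K(s,z)$. On the Dirichlet side, $K$ should admit an expansion as a general Dirichlet series in $s$ with frequencies $\log n/(4\pi)$ whose coefficients, read as functions of $z$, are the desired $U_n(z)$. On the spectral side, $K$ must extend meromorphically in $s$ with poles only at the points $s=(\rho-1/2)/i$, the principal part at each such pole encoding the functions $V_{\rho,j}(z)$ through derivatives of order up to $m(\rho)-1$. Finally, $K$ must satisfy a functional equation in $s$ in which the symmetries $\rho\mapsto 1-\rho$ and $\rho\mapsto \bar\rho$ of the nontrivial zeros of $\zeta(s)$ are built in. Granted such a $K$, the expansion \eqref{eq:add} will follow by pairing $K(s,z)$ against $\widehat{f}$ over a tall rectangular contour in the $s$-plane and applying the residue theorem: the Dirichlet expansion produces the sum over $n$, the residues at $s=(\rho-1/2)/i$ produce the sum over zeros, and a suitable choice of heights makes the horizontal sides negligible.

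\smallskip

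The construction of $K$ is the substantive step. Motivated by the shape of the Riemann--Weil formula \eqref{eq:RW}, I would present $K$ as a Dirichlet series in $s$ whose coefficients arise from certain modular integrals for the theta group $\Gamma_\theta$, chosen so that the modular transformation of these integrals matches the functional equation of $\zeta$. The analytic properties needed---meromorphic continuation, exactly the prescribed pole divisor, and the prescribed functional equation---are precisely the kind of data controlled by Knopp's abundance principle for Dirichlet series with functional equations. The strengthening of that principle referenced in the abstract is what allows one to realise the desired functional equation together with the required analytic profile within a sufficiently rich family of kernels and then to select the member of that family with the correct Dirichlet coefficients.

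\smallskip

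Given such a $K$, the functions $U_n(z)$ are defined as the Dirichlet coefficients of $K(s,z)$ in $s$, while the functions $V_{\rho,j}(z)$ are read off from the principal part of $K$ at $s=(\rho-1/2)/i$. The interpolation identities \eqref{eq:interp} then follow formally from this definition together with the duality principle: the top line of \eqref{eq:interp} records the absence of poles of the $U_n$ at the spectral points and the reproducing property of the Fourier coefficients $\widehat{U}_n(\log n'/(4\pi))$, while the bottom line records the dual reproducing property of the $V_{\rho,j}$. Rapid decay of $U_n$ and $V_{\rho,j}$ will follow from the functional equation of $K$ combined with Stirling-type estimates on the archimedean factors visible in \eqref{eq:RW}, and evenness is immediate from the invariance $z\mapsto -z$ of the construction.

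\smallskip

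The main obstacle I anticipate is the contour-closing step that yields the summation formula \eqref{eq:add}. Because the ordinates $\gamma$ of the zeros of $\zeta$ accumulate, the horizontal sides $\im s=\pm T$ of the rectangle are bad for most $T$, and one must select a sequence $T_k\to\infty$, independent of both $f$ and $z$, on which $K(s,z)$ enjoys polynomial control uniformly on compact subsets of $|\im z|<1/2$. Establishing such heights amounts to a density-type estimate on the growth of $K$ in the regions between consecutive zeros. Combined with a Phragm\'en--Lindel\"of argument based on the functional equation of $K$ and the decay of $\widehat{f}$ afforded by the class $\mathcal{H}_1$, this should simultaneously justify closing the contour and explain the conditional convergence of the sum over zeros in \eqref{eq:add}.
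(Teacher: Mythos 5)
Your outline reproduces the paper's high-level strategy (a two-variable Dirichlet-series kernel with a functional equation, contour integration, a special sequence of heights $T_k$), but as written it has concrete gaps at exactly the places where the work lies. First, your kernel cannot produce the left-hand side of \eqref{eq:add}: you require $K(s,z)$ to have ``poles only at the points $s=(\rho-1/2)/i$,'' but the reproducing term $f(z)$ arises precisely from a diagonal pole of the kernel. In the paper the kernel $H_{\pm}(w,s)$ of \eqref{eq:zetakernel} has simple poles at $w=s$ and $w=1-s$ with residues $1$ and $\pm1$, and it is the residue at $w=s$ that yields $F_\delta(s)=f(z)$ in \eqref{eq:rhs2}; without such a pole your residue computation equates the two sums to zero, not to $f(z)$. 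Relatedly, the pairing must be against $f$ itself (i.e. against $F(w)=f((w-1/2)/i)$ as in \eqref{eq:cont}), not against $\widehat f$: it is then the Dirichlet expansion in $w$ that produces the weights $\widehat f(\log n/(4\pi))$, while the residues at the zeros produce $f^{(j)}((\rho-1/2)/i)$. Pairing $\widehat f$ against poles at $(\rho-1/2)/i$ would instead produce values of $\widehat f$ at complex points. Second, the kernel is never actually constructed: invoking ``Knopp's abundance principle'' restates the abstract but supplies neither the pole divisor nor the coefficients. The paper's mechanism is to build $D(w,s)$ as the Mellin transform of the modular integrals $F_{1/2}^{\pm}(\tau,s)$ for $\Gamma_\theta$ and then set $H=\frac{\zeta^*(s)}{\zeta^*(w)}D(w,s)$; the division by $\zeta(w)$ is what creates the poles at the nontrivial zeros while preserving a Dirichlet series in $n^{-w/2}$ (closure of this class under multiplication), and it is also what makes the interpolatory properties \eqref{eq:interp} checkable, via M\"obius inversion \eqref{eq:hnmoebius} and the interpolation properties \eqref{eq:sqrtninterpolation} of the functions $b_n^{\pm}$. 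None of this, nor any substitute for it, appears in your sketch, and \eqref{eq:interp} does not ``follow formally'' from duality alone.

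Third, the quantitative step you defer is not a routine Phragm\'en--Lindel\"of matter, and the replacement you suggest is too weak for the stated theorem. Closing the contour for $f\in\mathcal{H}_1$ requires (i) polynomial vertical bounds for $D_{\pm}(w,s)$ on the line $\re w=1+\eps$, where the Dirichlet series converges only conditionally, so one needs the cancellation in the partial sums $\sum_{n\le x}\alpha^{\pm}_{n,1/2}(s)$ proved in Section~\ref{sec:estimates} (feeding into Lemma~\ref{lem:Dest}); and (ii) lower bounds for $|\zeta|$ on the horizontal segments of quality $\exp(c(\log\log T)^2)$, which come from the Ramachandra--Sankaranarayanan result (Lemma~\ref{lem:Ind}) and use the Euler product. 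Your proposed ``density-type estimate on the growth of $K$ between consecutive zeros'' is the Jensen/Borel--Carath\'eodory argument the paper discusses in Section~\ref{subsect:other}, which only yields bounds of size $T^{c\log\log T}$; with that bound the horizontal sides are not negligible against the decay $(1+|v|)^{-1}$ available for $f\in\mathcal{H}_1$, so your route would at best prove a weaker theorem with much stronger decay hypotheses on~$f$.
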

As an immediate corollary we get the following result that appears to be difficult to
obtain without relying on the interpolation formula from Theorem~\ref{thm:zeta}.
\begin{corollary}\label{cor:zeta}
(i) If an even function $f$ in $\mathcal{H}_1$ satisfies
$\widehat{f}(\frac{\log n}{4\pi})=f^{(j)}(\frac{\rho-1/2}{i})=0$ for all $n\ge1$
and $0\le j<m(\rho)$, where $\rho$ ranges over the nontrivial zeros of $\zeta(s)$,
then $f$ vanishes identically.

(ii) An even function $f$ in $\mathcal{H}_1$ that is divisible by \change{
$\zeta(\frac12+is)$} (in
the sense that \change{$f(s)/\zeta(\frac12+is)$} is holomorphic for $|\im\, z|<1/2+\eps$)
is
uniquely determined by the values $\widehat{f}(\frac{\log n}{4\pi})$, $n\ge1$.
\end{corollary}
It is worth emphasizing that both Theorem~\ref{thm:zeta} and the above corollary
are rather sensitive to the choice of interpolation points and break down if one
removes any single point from the set $\{\frac{\log n}{4\pi}\}_{n\ge 1}$ or from the
(multi)set of nontrivial zeros of $\zeta(s)$.

Both  \eqref{eq:RW} and \eqref{eq:add} rely crucially on
the functional equation
    \[ \pi^{-s/2} \Gamma(s/2) \zeta(s)=\pi^{-(1-s)/2} \Gamma((1-s)/2) \zeta(1-s), \]
but a principal distinction between them is that the deduction of the Riemann-Weil formula
starts from the Euler product representation of $\zeta(s)$, while formula \eqref{eq:add}
is tied to the Dirichlet series representation  of $\zeta(s)$. Hence we may think of the
two formulas as expressing respectively a multiplicative and an additive duality relation
between the zeta zeros and a distinguished sequence of integers. We observe that the
sequence of integers involved in~\eqref{eq:RW} (the prime powers \change{$n=p^k$,
corresponding to the nontrivial terms~$\Lambda(n)\widehat{f}(\frac{\log n}{2\pi})$
in~\eqref{eq:RW}}) is a rather sparse subsequence of the corresponding sequence appearing
in \eqref{eq:add} (the square-roots of the positive integers, \change{corresponding
to~$\widehat{f}(\frac{\log\sqrt{n}}{2\pi})$ in~\eqref{eq:add}}).

In view of this inclusion, we may think of \eqref{eq:RW} as arising from \eqref{eq:add} in the following way: The left-hand side of \eqref{eq:RW} defines a linear functional on $\mathcal{H}_1$, while the right-hand side gives the representation of this functional with respect to the basis functions of Theorem~\ref{thm:zeta}. This is the rationale for our way of writing the Riemann--Weil formula.

Assuming for a moment the truth of the Riemann hypothesis and that all the zeta zeros are simple, we may think of a formula like \eqref{eq:add} as a confirmative answer to the following Fourier analytic question: Is it possible to recover, in a non-redundant way, any sufficiently nice function~$f$ on the real line from samples of~$f$ and its Fourier transform~$\widehat{f}$ along two suitably chosen sequences in respectively the time and frequency domain? Here the recovery being non-redundant means that it fails as soon as any point is removed from either of the two sequences. Clearly, such a favorable situation requires a delicate interplay between the two sequences.

Radchenko and Viazovska~\cite{RV} have shown that one obtains a Fourier interpolation formula of desired type by choosing both sequences to be $\pm\sqrt{n}$ with~$n$ ranging over the nonnegative integers. For simplicity, we restrict again to the case of even functions.
\begin{theoremA}[\cite{RV}]
There exists a sequence of even Schwartz functions~$a_n\colon\RR\to\RR$
with the property that for every even Schwartz function
$f:\RR\to\RR$ we have
    \begin{equation} \label{eq:sqrt}
    f(x) = \sum_{n=0}^{\infty} f(\sqrt{n})a_n(x) + \sum_{n=0}^{\infty}\widehat{f}(\sqrt{n}) \widehat{a}_n(x) ,
    \end{equation}
where each of the two series on the right-hand side converges absolutely
for every real~$x$. The functions $a_n$ satisfy the following interpolatory properties: $a_n(\sqrt{m})=\delta_{n,m}$ and
$\widehat{a}_n(\sqrt{m})=0$ when $m\ge 1$, and in addition
\begin{equation} \label{eq:poisson} a_0(0)=\widehat{a}_0(0)=\frac{1}{2},  \quad a_{n^2}(0)=-\widehat{a}_{n^2}(0)=-1 , \quad a_{n}(0)=-\widehat{a}_{n}(0)=0 \quad \text{otherwise}. \end{equation}
\end{theoremA}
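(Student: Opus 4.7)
My strategy is to reduce the construction of the basis $\{a_n\}$ to finding a pair of generating functions with prescribed modular behavior under the theta group $\Gamma_\theta \subset \SL(2,\ZZ)$, generated by $\tau \mapsto \tau+2$ and $\tau \mapsto -1/\tau$. I would define
$$F(\tau, x) \coloneqq \sum_{n=0}^\infty a_n(x)\, e^{\pi i n \tau}, \qquad \widetilde{F}(\tau, x) \coloneqq \sum_{n=0}^\infty \widehat{a}_n(x)\, e^{\pi i n \tau}$$
for $\tau \in \HH$, so that by construction both are $2$-periodic in $\tau$ and $a_n(x)$ is recovered as the $n$-th Fourier coefficient by a horizontal contour integral. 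Testing the desired identity \eqref{eq:sqrt} against the Gaussian family $f_\tau(x)=e^{\pi i \tau x^2}$, whose Fourier transform equals $(-i\tau)^{-1/2} e^{-\pi i x^2/\tau}$, one finds that \eqref{eq:sqrt} is equivalent to the functional equation
$$F(\tau, x) + (-i\tau)^{-1/2}\, \widetilde{F}(-1/\tau, x) = e^{\pi i \tau x^2}$$
for $\tau \in \HH$, together with its $S$-twin. The pair $(F,\widetilde{F})$ is therefore a modular integral of weight $1/2$ for $\Gamma_\theta$ whose $S$-cocycle is the Gaussian kernel. Conversely, once such a pair is in hand, the identity for a general even Schwartz $f$ follows by superposition: any such $f$ can be represented as a contour integral of Gaussians $f_\tau$ over $\tau \in \HH$ (a Laplace-type representation suffices after writing $f$ in terms of Hermite-like data), and \eqref{eq:sqrt} then extends by linearity.

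The core technical step is the explicit construction of $F$ and $\widetilde{F}$. The quotient $\Gamma_\theta \backslash \HH$ has genus zero with two cusps $i\infty$ and $1$, and is uniformized by a Hauptmodul $\lambda(\tau)$. The classical Jacobi theta $\theta(\tau)=\sum_{n\in\ZZ} e^{\pi i n^2 \tau}$ is a non-vanishing weight $1/2$ form for $\Gamma_\theta$, so every weakly holomorphic weight $1/2$ form for $\Gamma_\theta$ can be written as $\theta(\tau)\cdot R(\lambda(\tau))$ for a rational function $R$. I would look for $F(\tau,x)$ and $\widetilde{F}(\tau,x)$ as Poincar\'e-like integrals of such weakly holomorphic modular forms against a Gaussian kernel in $x$, with the rational function $R$ chosen so that the principal parts of $F$ at the two cusps match precisely the Gaussian $e^{\pi i \tau x^2}$ that appears as the cocycle. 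The interpolatory relations $a_n(\sqrt{m})=\delta_{n,m}$ and $\widehat{a}_n(\sqrt{m})=0$ for $m\ge 1$ then fall out automatically: substituting $x=\sqrt{m}$ in the cocycle collapses the Gaussian to the single Fourier mode $e^{\pi i m \tau}$, and matching Fourier coefficients on both sides of the functional equation forces the Kronecker delta.

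The main obstacle will be analytic: one has to prove convergence of the Poincar\'e-type integrals defining $F$ and $\widetilde{F}$, show that the resulting generating function admits a genuine Fourier expansion with rapidly decreasing coefficients $a_n(x)$, and justify the interchange of sum and integral used in the Gaussian-superposition argument. This requires a careful control of the asymptotics of the weight $1/2$ weakly holomorphic modular form at both cusps, together with uniform bounds in $x$ and $\tau$. The special values \eqref{eq:poisson} then emerge by specializing the cocycle identity at $x=0$: the values $a_0(0)=\widehat{a}_0(0)=1/2$ and $a_{n^2}(0)=-\widehat{a}_{n^2}(0)=-1$, with $a_n(0)=\widehat{a}_n(0)=0$ otherwise, are dictated by the Jacobi transformation $\theta(-1/\tau)=(-i\tau)^{1/2}\theta(\tau)$, which is precisely the single linear relation among the sampling functionals $f\mapsto f(\sqrt{n}),\ \widehat{f}(\sqrt{n})$ common to all even Schwartz $f$, namely Poisson summation for $\ZZ$.
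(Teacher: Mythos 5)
Your proposal follows essentially the same route as the Radchenko--Viazovska proof that the paper cites (and whose machinery it reproduces in Sections~\ref{sec:modint} and~\ref{sec:sqrt}): test \eqref{eq:sqrt} on Gaussians $e^{\pi i\tau x^2}$ to get a weight-$1/2$ modular-integral functional equation for the generating series of the $a_n$, construct the solution by contour integration of weakly holomorphic forms for $\Gamma_{\theta}$ built from $\theta$ and the Hauptmodul, read off the interpolatory properties from the $q$-expansion, extend to all even Schwartz functions by a Gaussian-superposition/density argument, and obtain \eqref{eq:poisson} from Poisson summation. Your only cosmetic deviation is working with the coupled pair $(F,\widetilde F)$ rather than first diagonalizing into Fourier eigenfunctions $b_n^{\pm}$ (and the forms integrated against the Gaussian kernel have weight $3/2$ in the integration variable, the dual weight to $1/2$), which does not change the argument.
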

The non-redundancy of the representation~\eqref{eq:sqrt} follows from the specific properties of the functions~$a_n$ as shown in~\cite{RV}. We note in passing that the methods developed in the present paper allow us to sharpen this result considerably (see Section~\ref{sec:sqrt} below).

Returning to the general discussion, we note that the properties \eqref{eq:poisson} show that \eqref{eq:sqrt} becomes the Poisson summation formula
when evaluated at $x=0$. We have therefore a curious analogy between the
Poisson summation and Riemann--Weil formulas: They can both be viewed as
representing distinguished linear functionals in terms of a Fourier
interpolation basis. Both formulas owe their existence and importance to an
inherent algebraic structure, which in the first case is additive (periodicity)
and in the latter multiplicative.


To construct our interpolation formulas, we will use weakly holomorphic modular
forms for the theta group. The core ingredient in our construction is a
function of two complex variables~$w$ and~$s$, which in the case of even
functions takes the form
    \[ D(w, s)\coloneqq\sum_{n=1}^{\infty} \beta_{n}(s) n^{-w/2}, \]
with $\beta_n(s)$ being the Fourier coefficients of a certain $2$-periodic analytic function on the upper half-plane that is related through a Mellin
transform to the functions $F_{+}(x,\tau)$ considered in~\cite{RV}.
The Dirichlet series $w\mapsto D(w,s)$ converges absolutely
in a half-plane depending on $s$ and has a meromorphic extension to $\mathbb{C}$ with simple
poles at the three points $1,s,1-s$. A crucial point is that $D(w,s)$ is related to the Riemann
zeta function in the following precise way:
    \begin{equation}
    \label{eq:kernel}
    H(w,s)\coloneqq\frac{\zeta(s)}{\zeta(w)} D(w,s)
    \end{equation}
satisfies the functional equations $H(1-w,s)=-H(w,s)$ and $H(w,1-s)=H(w,s)$. These properties of $H(w,s)$, along with suitable estimates for $D(w,s)$ and a familiar contour integration argument applied to
    \begin{equation} \label{eq:cont}
    \frac{1}{2\pi i} \int_{1/2+\eps-i\infty}^{1/2+\eps+i\infty} f\left(\frac{w-1/2}{i}\right) H(w, iz+1/2) dw,
    \end{equation}
are what we will use to establish Theorem~\ref{thm:zeta}. It is essential that $H(w,s)$ is a Dirichlet series in~$w$ so that \eqref{eq:cont} produces a weighted sum of Fourier transforms of~$f$.

We may now observe that if we replace $\zeta(s)/\zeta(w)$ by $F(s)/F(w)$ in \eqref{eq:kernel}, with $F(s)$ an $L$-function satisfying a functional equation of the form
    \[ Q^{-s} \Gamma(s/2) F(s)=Q^{-(1-s)} \Gamma((1-s)/2) \overline{F(1-\overline{s})} \]
for some positive $Q$, then we still have a Dirichlet series in the variable $w$ that is amenable to our method of proof. This observation allows us to associate Fourier interpolation bases with the nontrivial zeros of all such $L$-functions. Hence the single function $D(w,s)$ generates an abundance of Fourier interpolation bases. We stress that this situation relies on a special multiplicative structure inherent in the present setting, namely that the class of Dirichlet series over exponentials of the form $n^{-w/2}$ is closed under multiplication.

The general phenomenon of Fourier interpolation bases may be thought of as ranging from Theorem~A via our Theorem~\ref{thm:zeta} to the ``degenerate'' situation related to the cardinal series
    \[ \sum_{n=-\infty}^{\infty} f(n) \frac{\sin \pi (z-n)}{\pi(z-n)}. \]
We find it enlightening to place our results in this more general context by considering two necessary conditions for a pair of sequences to generate Fourier interpolation bases.  First, we show that the existence of a kernel function with properties similar to those of the function $H(w,s)$ is a prerequisite for Fourier interpolation. This observation yields a precise notion of duality between the two sequences involved in a Fourier interpolation basis, closely aligned with modular relations as for example studied in some generality by Bochner \cite{B}. Second, we discuss a recent density theorem of Kulikov \cite{Ku}, which is a version of the uncertainty principle valid for Fourier interpolation bases. We observe that there is a precise correspondence between Kulikov's density condition and the Riemann--von Mangoldt formula for the density of the nontrivial zeros of zeta and $L$-functions.

\subsection*{Outline of the paper} We will begin our discussion in Section~\ref{sec:generalkernel} with some general considerations as outlined in the preceding paragraph. We have included in this section also a brief subsection pointing out that Fourier interpolation bases generate families of ``crystalline'' measures, a topic that in our context goes back to Guinand \cite{Gu} and that recently has received notable attention.
See for example the recent papers of Kurasov and Sarnak~\cite{KS},
Lev and Olevskii~\cite{LO}, and Meyer~\cite{Me}.

We then proceed in Section~\ref{sec:modint} to construct the modular integrals that are used to build the Dirichlet series $D(w,s)$ referred to above. This requires a fairly comprehensive discussion of modular forms for the theta group. This section builds largely on ideas that go back to Knopp~\cite{K3}, with an important additional ingredient from~\cite{RV}, namely, the construction of modular integrals using contour integrals with modular kernels. 

Based on the groundwork laid in Section~\ref{sec:modint}, we may proceed to prove a weak version of Theorem~\ref{thm:zeta}. By this we mean the following: We may prove that \eqref{eq:add} holds for functions $f$ that are analytic in a sufficiently wide strip and that has sufficient decay at $\pm \infty$. This is our rationale for proceeding to the proof of Theorem~\ref{thm:zeta} in Section~\ref{sec:basisconstr} and the corresponding results for other $L$-functions and Dirichlet series in Section~\ref{sec:lfunkernels}, postponing the most technical part of the proof to the later Section~\ref{sec:estimates}.
We hope this choice of exposition will give the reader easier access to the main ideas underlying formula~\eqref{eq:add}.

Section~\ref{sec:estimates} contains precise estimates for the coefficients of $D(w,s)$, including bounds for associated partial sums.
The estimates obtained in this section appear to be close to optimal. Indeed, in certain ranges of the parameters that are involved, this may be concluded up to a logarithmic factor. By the results of this section, we obtain the precise quantitative restrictions on the function $f$ in Theorem~\ref{thm:zeta}. We also obtain, as will be shown in the final Section~\ref{sec:sqrt}, a new version of Theorem~A with rather mild constraints on the function $f$ being represented by the Fourier interpolation formula \eqref{eq:sqrt}.

\section{Generalities on Fourier interpolation}
\label{sec:generalkernel}

The main purpose of this section is to show that Fourier interpolation bases
generically arise from certain kernels that we will \change{refer to as Dirichlet series
kernels for suggestive reasons}. We do not explicitly use the results of this section
anywhere else in the paper, but it does provide motivation for some of our constructions.

We start from the assumption that any ``reasonable'' function $f$ can be
represented as
	\begin{equation} \label{eq:fex} f(x)= \sum_{\lambda\in \Lambda} f(\lambda) g_{\lambda}(x) + \sum_{{\lambda}^*\in \Lambda^*}
	\widehat{f}(\lambda^*) h_{\lambda^*}(x),
	\end{equation}
where $\Lambda$ and $\Lambda^*$ are two sequences of real numbers with no finite accumulation point and the associated functions $g_{\lambda}(x)$ and $h_{\lambda^*}(x)$ also are ``reasonable'', so that convergence of the two series is ensured. We also require that this representation behaves in the expected way under Fourier \change{transformation}, so that
	\[ \widehat{f}(x)= \sum_{\lambda\in \Lambda} f(\lambda) \widehat{g_{\lambda}}(x) + \sum_{{\lambda}^*\in \Lambda^*}
	\widehat{f}(\lambda^*) \widehat{h_{\lambda^*}}(x).\]

The basic phenomenon that we observe is that the two (general)
Dirichlet series
	\[ \sum_{\lambda^*\ge 0}  h_{\lambda^*}(x) e^{-2\pi i \lambda^* z}
	\quad \text{and} \quad
	\sum_{\lambda^*\le 0}  h_{\lambda^*}(x) e^{-2\pi i \lambda^* z} \]
admit meromorphic extension to~$\mathbb{C}$ for every~$x$, with
simple poles at~$x$ and at the points of the dual sequence~$\Lambda$.
Moreover, the two Dirichlet series are intertwined by a functional equation.
By duality, an analogous result holds if we reverse the roles of the two
sequences and replace $h_{\lambda^*}(x)$ by $\widehat{g_{\lambda}}(\xi)$.
Conversely, as will be demonstrated in concrete terms in later sections
of this paper, Dirichlet series kernels with such properties generate,
by contour integration, Fourier interpolation formulas, with the range
of validity depending on specific quantitative properties of these kernels.

Guided by the canonical case when the two sequences~$\Lambda$
and~$\Lambda^*$ consist of the same points $\pm\sqrt{n}$, $n=0,1,2,...$,
we will assume that one of the sequences satisfies a sparseness condition
asserting that there is an entire function vanishing on~$\Lambda$
whose growth is at most of order~2 and finite type in any horizontal strip.
In what follows, we will let~$\Lambda$ be the sequence enjoying
this property.

Before turning to precise results about general Dirichlet series
kernels, we would like to point out that more liberal assumptions
could certainly be made, such as~$\Lambda$ and~$\Lambda^*$ be multisets
(so that derivatives appear in~\eqref{eq:fex}) or the sequence~$\Lambda$
be located in a strip. The assumptions of Theorem~\ref{thm:genker}
below represent a trade-off between describing a general phenomenon
and avoiding excessive technicalities and inessential difficulties.

\subsection{The Dirichlet series kernel associated with $\Lambda$}
In what follows, we use the convention that a prime on a summation
sign, like in $\sum'_{\lambda^*}$, means that a possible term corresponding
to $\lambda^*=0$ should be divided by~2, while all other terms are
summed in the usual way.

\begin{theorem} \label{thm:genker}
	Let $\Lambda$ be a sequence of distinct real numbers such that
	there exists an entire function $G_{\Lambda}$ vanishing on $\Lambda$
	and satisfying the growth estimate $G_{\Lambda}(x+iy)\ll_{\eta} e^{cx^2}$
	for some positive $c$ in every strip $|y|\le \eta$. Let $\Lambda^*$
	be another locally finite subset of the real line, and suppose there exist
	associated sequences of functions $g_{\lambda}:\mathbb{R}\to \mathbb{C}$, $\lambda\in \Lambda$ and $h_{\lambda^*}:\mathbb{R}\to\mathbb{C}$, $\lambda^*\in \Lambda^*$
	with the following properties:
	\begin{itemize}
		\item[(a)] There exists a positive number $\eta_0$ such that for every real $x$, the two Dirichlet series
		$ E_{\pm}(x,z)\coloneqq2\pi i \sum'_{\mp \lambda^*\ge 0} h_{\lambda^*}(x)e^{-2\pi i \lambda^* z}$ converge absolutely for $\pm \im z\ge \eta_0$.
		\item[(b)] For every $\eps>0$ and $x$ in $\mathbb{R}$, $g_{\lambda}(x) e^{-\eps \lambda^2} \to 0$ when $|\lambda|\to \infty$.
		\item[(c)] For every $\eps>0$ and $z$ satisfying $|\im z|\ge \eta_0$, the function $f_{\eps,z}(x)\coloneqq e^{-\eps x^2}/(z-x)$
		can be represented as
		\begin{equation}\label{eq:fe}
		f_{\eps,z}(x)=\sum_{\lambda\in \Lambda} f_{\eps,z}(\lambda) g_{\lambda}(x) + \sum_{{\lambda}^*\in \Lambda^*}
		\widehat{f_{\eps,z}}(\lambda^*) h_{\lambda^*}(x).
		\end{equation}
	\end{itemize}
	Then for every $x$, the functions $z\mapsto E_{\pm}(x,z)$ extend to meromorphic functions with simple poles at $x$ and every point $\lambda$ in $\Lambda$ with respective residues $\pm 1$ and $\pm g_{\lambda}(x)$, and the functional equation
	\[ E_{+}(x,z)=-E_{-}(x,z) \]
	holds.
\end{theorem}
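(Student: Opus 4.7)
The plan is to apply the interpolation identity \eqref{eq:fe} to the Gaussian-tempered Cauchy kernels $f_{\varepsilon,z}(x)=e^{-\varepsilon x^2}/(z-x)$ furnished by hypothesis~(c) and then let $\varepsilon\to 0^+$. Since $f_{\varepsilon,z}(\lambda)=e^{-\varepsilon\lambda^2}/(z-\lambda)$, the $\Lambda$-sum in \eqref{eq:fe} becomes the regularized partial-fraction expansion
\[
P_\varepsilon(x,z)\coloneqq\sum_{\lambda\in\Lambda}\frac{g_\lambda(x)\,e^{-\varepsilon\lambda^2}}{z-\lambda},
\]
which converges absolutely for every $\varepsilon>0$ by~(b). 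The Fourier transform $\widehat{f_{\varepsilon,z}}$ can be written as the convolution of a narrow Gaussian with $\widehat{1/(z-\cdot)}$; a residue calculation gives $\widehat{1/(z-\cdot)}(\xi)=-2\pi i\,e^{-2\pi i z\xi}\mathbf{1}_{\xi<0}$ for $\im z>0$ and $+2\pi i\,e^{-2\pi i z\xi}\mathbf{1}_{\xi>0}$ for $\im z<0$. As $\varepsilon\to 0^+$ the Gaussian concentrates, so for $\im z>0$ one finds that $\widehat{f_{\varepsilon,z}}(\lambda^*)$ converges pointwise to $-2\pi i\,e^{-2\pi i \lambda^* z}$ when $\lambda^*<0$, to $-\pi i$ when $\lambda^*=0$, and to $0$ when $\lambda^*>0$, with the obvious analogous statement for $\im z<0$.

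\medskip
By the absolute convergence hypothesis~(a), dominated convergence allows one to exchange limit and $\Lambda^*$-sum: for $\im z>\eta_0$ the $\Lambda^*$-sum tends to $-E_-(x,z)$, and for $\im z<-\eta_0$ it tends to $+E_+(x,z)$. Since the left-hand side of \eqref{eq:fe} tends to $1/(z-x)$, the existence of these two limits forces $\Phi(x,z)\coloneqq\lim_{\varepsilon\to 0^+}P_\varepsilon(x,z)$ to exist in each half-plane, and rearranging yields
\[
E_-(x,z)=\Phi(x,z)-\tfrac{1}{z-x}\ \ (\im z>\eta_0),\qquad E_+(x,z)=\tfrac{1}{z-x}-\Phi(x,z)\ \ (\im z<-\eta_0).
\]
The theorem thus reduces to showing that $\Phi(x,\cdot)$ extends meromorphically to $\mathbb{C}$ with simple poles only at the points of $\Lambda$ and residues $g_\lambda(x)$. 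Granted this, the two displayed identities deliver the meromorphic extensions of $E_\pm$ with the claimed poles and residues, and they combine to give the functional equation $E_+(x,z)=-E_-(x,z)$ as meromorphic functions on $\mathbb{C}$.

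\medskip
The main obstacle is this meromorphic continuation across the critical strip $|\im z|\le\eta_0$, in which the Gaussian regularization need not converge uniformly on compacta. The sparseness encoded by $G_\Lambda$ is what makes it possible. Writing $\widetilde{G}_\lambda(z)\coloneqq G_\Lambda(z)/(z-\lambda)$ for the entire function obtained by cancelling the simple zero of $G_\Lambda$ at $\lambda$, we obtain the entire regularization
\[
\Psi_\varepsilon(z)\coloneqq G_\Lambda(z)\,P_\varepsilon(x,z)=\sum_{\lambda\in\Lambda}g_\lambda(x)\,e^{-\varepsilon\lambda^2}\,\widetilde{G}_\lambda(z).
\]
The growth estimate $G_\Lambda(x+iy)\ll_\eta e^{cx^2}$ in each horizontal strip, combined with the uniform boundedness of $E_\pm$ on the lines $\im z=\pm\eta_0$ (a direct consequence of~(a)), produces via Phragm\'en--Lindel\"of an $\varepsilon$-uniform bound of the form $|\Psi_\varepsilon(z)|\le Ce^{c(\re z)^2}$ in the strip. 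Montel's theorem then extracts an entire limit $\Psi=\lim_{\varepsilon\to 0^+}\Psi_\varepsilon$, and since $\Psi$ agrees with $G_\Lambda(z)\Phi(x,z)$ in both half-planes, the function $\Phi(x,z)=\Psi(z)/G_\Lambda(z)$ is the sought meromorphic extension; its residue at $\lambda$ is $\lim_{\varepsilon\to 0^+}g_\lambda(x)\,e^{-\varepsilon\lambda^2}=g_\lambda(x)$, as required.
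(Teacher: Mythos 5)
Your proposal follows essentially the same route as the paper's proof: Gaussian regularization of the Cauchy kernel, the explicit $\varepsilon\to0$ limits of $\widehat{f_{\varepsilon,z}}$ together with (a) to control the $\Lambda^*$-sum on the two lines $\im z=\pm\eta_0$, multiplication of the pole sum by $G_\Lambda$, propagation of the $\varepsilon$-uniform boundary bound into the strip, and a normal-families limit identified on the half-planes via (c). Two small corrections are needed. First, your labels are swapped relative to definition (a): the series converging for $\im z\ge\eta_0$ is $E_+$ (it runs over $\lambda^*\le 0$), so for $\im z>\eta_0$ the $\Lambda^*$-sum tends to $-E_+(x,z)$, not $-E_-(x,z)$; after the swap your displayed identities become $E_+=\Phi-\tfrac{1}{z-x}$ on the upper line and $E_-=\tfrac{1}{z-x}-\Phi$ on the lower line, exactly as in the paper, the functional equation being unaffected (note the residue of $E_\pm$ at $x$ then comes out as $\mp1$, which is what the paper's own proof yields, rather than the literal $\pm1$ of the statement). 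Second, Phragm\'en--Lindel\"of in a strip cannot be invoked from the boundary bounds alone: for each fixed $\varepsilon$ you must supply an a priori growth estimate for $\Psi_\varepsilon$ inside the strip; this does follow from (b) together with the hypothesis on $G_\Lambda$ in a slightly wider strip (which controls $\widetilde G_\lambda$ uniformly in $\lambda$), and it is the step the paper instead handles by using (b) to produce far-away vertical segments on which $F_\varepsilon$ tends to zero and then applying the maximum principle on rectangles. Finally, when dividing by $G_\Lambda$ you should note (the paper leaves this implicit as well) that possible extra or multiple zeros of $G_\Lambda$ create no spurious poles and do not alter the residues: integrating $P_\varepsilon$ over small circles avoiding the zeros of $G_\Lambda$ and using the locally uniform convergence shows that the principal part of the limit at each $\lambda$ is exactly $g_\lambda(x)/(z-\lambda)$, and that the limit is holomorphic at any other zero of $G_\Lambda$.
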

Note that the two assumptions (a) and (b) guarantee that the two
series in \eqref{eq:fe} converge absolutely.

\begin{proof}[Proof of Theorem~\ref{thm:genker}]
We fix $x$ and consider the function
	\[ F_{\eps}(z)\coloneqq \sum_{\lambda\in \Lambda} f_{\eps, z}(\lambda) g_{\lambda}(x), \]
which by (b) represents a meromorphic function in $\mathbb{C}$.
By \eqref{eq:fe}, we may write
	\begin{equation} \label{eq:Fout} F_{\eps}(z)=f_{\eps,z}(x)- \sum_{{\lambda}^*\in \Lambda^*}
	\widehat{f_{\eps,z}}(\lambda^*) h_{\lambda^*}(x) \end{equation}
when $|\im z| \ge \eta_0$. We may use assumption (a)
to control the sum on the right-hand side of \eqref{eq:Fout} on the
two lines $\im z = \pm\eta_0$. To this end, assume first that $\im z = \eta_0$.
Then since
	\begin{equation} \label{eq:fourier} \widehat{f_{\eps,z}}(\xi)=\frac{-2\pi^{3/2} i}{\sqrt{\eps}} \int_{-\infty}^{0} e^{-2\pi iw z} e^{-\pi^2 \eps^{-1}(\xi-w)^2} dw,\end{equation}
we have
	\begin{equation} \label{eq:fourier2} \widehat{f_{\eps,z}}(\xi) \ll \begin{cases} e^{2\pi \eta_0 \xi}, & \xi \le 0 ,\\ e^{-\pi^2 \eps^{-1} \xi^2}, & \xi > 0, \end{cases}\end{equation}
uniformly when $\im z=\eta_0$ and $0<\eps \le 1$, say. The same argument applies to
	\begin{equation} \label{eq:fourier3} \widehat{f_{\eps,z}}(\xi)=\frac{2\pi^{3/2} i}{\sqrt{\eps}} \int_{0}^{\infty} e^{-2\pi iw z} e^{-\pi^2 \eps^{-1}(\xi-w)^2} dw \end{equation}
when  $\im z=-\eta_0$, and hence, by \eqref{eq:Fout} and (a), $F_{\eps}(z)$ is uniformly bounded on $|\im z|=\eta_0$ for $0<\eps\le 1$.
This along with our assumption on the sequence $\Lambda$ implies that the function
	\[ F_{\eps}(z) G_{\Lambda}(z) e^{-cz^2} \]
is bounded on $|\im z|=\eta_0$, uniformly for $0<\eps\le 1$. It is also clear by assumption (b) and the sparseness of
$\Lambda$ that there exist $t_n\to \infty $ such that $F_{\eps}(t_n+i y)\to 0$ for any fixed $\eps$, uniformly when $|y|\le \eta_0$.
Similarly, there exist $\tau_n\to -\infty $ such that $F_{\eps}(\tau_n+i y)\to 0$ for any fixed $\eps$, uniformly when $|y|\le \eta_0$. Hence, by the maximum modulus principle, $F_{\eps}(z) G_{\Lambda}(z) e^{-cz^2}$ is bounded in the strip $|\im z|\le \eta_0$. Now a normal family argument shows that when $\eps \to 0$, $F_{\eps}(z) $ tends locally uniformly to a meromorphic function $F(z)$ with simple poles at the sequence $\Lambda$. On the other hand, it follows from \eqref{eq:fourier} and \eqref{eq:fourier3} along with the uniform bound \eqref{eq:fourier2} that
	\[ F(z)=\begin{cases}\frac{1}{z-x}+E_+(x,z), & \im z=\eta_0 \\
	\frac{1}{z-x}-E_-(x,z), & \im z=- \eta_0 . \end{cases}  \]
This relation yields the asserted meromorphic continuation of the two functions $E_{\pm}(x,z)$ as well as the functional equation $E_+(x,z)=-E_-(x,z)$.
\end{proof}


\subsection{The Dirichlet series kernel associated with $\Lambda^*$}
In the preceding section, we put a sparseness condition on $\Lambda$ to control the growth of the entire function $G_{\Lambda}$. In contrast, the dual sequence $\Lambda^*$ could be arbitrarily dense, and this means that the Phragm\'{e}n--Lindel\"{o}f-type argument used above would not work to establish an analogue of Theorem~\ref{thm:genker}. This obstacle may be circumvented by relying instead on Theorem~\ref{thm:genker}. To avoid unnecessary technicalities, we begin by stating a result that follows quite easily from Theorem~\ref{thm:genker} without giving the exact analogue that we are aiming for.

In what follows, we will use the function
	\[ \Phi(x,w)\coloneqq  \int_{-i\eta_0}^{i\eta_0} e^{2\pi i zw} E_-(x,z) dz\]
\change{several times}.
Here the integral is to be interpreted in the principal value sense,
should $z\mapsto E_-(x,z)$ have a simple pole at~0.
We observe that $w\mapsto \Phi(x,w)$ is an entire function for every real~$x$. It will also be convenient to employ the usual notation $H(x)$ for
the Heaviside step function.

\begin{theorem}\label{thm:dualker}
	Let the assumptions be as in Theorem~\ref{thm:genker} and assume in addition the following:
	\begin{itemize}
		\item[(d)] There exists a positive number $\nu_0$ such that the
		two Dirichlet series
		$E^*_{\pm}(x,w)\coloneqq 2\pi i \sum_{\pm \lambda\ge 0}' g_{\lambda}(x) e^{2\pi i \lambda w}$
		converge absolutely for $\pm \im w \ge \nu_0$.
		\item[(e)] We have $E_{\pm}(x, t_n +i\eta)\ll e^{c |t_n|}$ for
		some~$c>0$, uniformly for $|\eta|\le \eta_0$ and
		for suitable sequences $\{t_n\}_{n\ge1}$ tending to $\pm \infty$.
	\end{itemize}
	Then
	\begin{align*} E^*_{+}(x,w)
	& =  - 2\pi i e^{2\pi i xw}H(x)  +  \sum_{\lambda^*\ge 0}\!\phantom{}^{'} h_{\lambda^*}(x)\frac{e^{-2\pi \eta_0 (w-\lambda^*)}}{(w-\lambda^*)}
	+  \sum_{\lambda^*\le 0}\!\phantom{}^{'} h_{\lambda^*}(x)\frac{e^{2\pi \eta_0 (w-\lambda^*)}}{(w-\lambda^*)}   + \Phi(x,w) \\
	E^*_{-}(x,w)
	& = - 2\pi i e^{2\pi i xw} - E_+^*(x,w) .\end{align*}
\end{theorem}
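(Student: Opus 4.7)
The argument is a residue-theorem computation: apply Cauchy's theorem to $z \mapsto E_-(x,z)\,e^{2\pi i zw}$ on a rectangle with horizontal sides on $\im z = \pm \eta_0$ and left side essentially the segment $[-i\eta_0, i\eta_0]$, so as to enclose the right-half-plane poles of $E_-$ and recover the Dirichlet series $E^*_+(x,w)$ of (d). Fix $x \in \RR$ and $w$ with $\im w \ge \nu_0$, and let $\Gamma_T$ be the counter-clockwise rectangle with vertices $\pm i\eta_0$ and $T \pm i\eta_0$ (indented about $z=0$ if necessary). By Theorem~\ref{thm:genker}, $E_-(x,z)$ is meromorphic with simple poles at $z = x$ (residue $-1$) and at $z = \lambda \in \Lambda$ (residue $-g_\lambda(x)$); the residue theorem therefore gives
\[
\oint_{\Gamma_T} E_-(x,z)\,e^{2\pi i zw}\,dz = -2\pi i\,e^{2\pi i xw}H(x) \;-\; 2\pi i\!\!\sum_{\substack{\lambda \in \Lambda \\ 0 < \lambda \le T}}\!\! g_\lambda(x)\,e^{2\pi i \lambda w},
\]
whose right-hand side tends to $-2\pi i\,e^{2\pi i xw}H(x) - E^*_+(x,w)$ as $T \to \infty$, by (d).

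I next evaluate the four edges. On the lower edge $\im z = -\eta_0$, the Dirichlet series for $E_-$ converges absolutely; interchanging sum and integral reduces the integral to a sum of elementary pieces $\int_0^T e^{2\pi i t (w-\lambda^*)}\,dt$ whose boundary term at $t = T$ vanishes as $T \to \infty$ because $\im w \ge \nu_0 > 0$. This produces in the limit the $\lambda^* \ge 0$ sum of the theorem, absolutely convergent thanks to the summability of $h_{\lambda^*}(x)\,e^{-2\pi\eta_0 \lambda^*}$ implied by (a). The upper edge is treated symmetrically using $E_- = -E_+$ together with the Dirichlet series for $E_+$, producing the companion sum over $\lambda^* \le 0$. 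The left edge, traversed from $i\eta_0$ down to $-i\eta_0$, equals $-\Phi(x,w)$ by the definition of $\Phi$. Finally, the right edge is controlled by (e): along the sequence $T = t_n$, $|E_-(x,t_n+i\eta)| \ll e^{c|t_n|}$ while $|e^{2\pi i(t_n + i\eta)w}| \ll e^{-2\pi t_n \im w}$, so this edge vanishes once $\im w > c/(2\pi)$; the resulting identity then extends to all $w$ with $\im w \ge \nu_0$ by meromorphy in $w$. Solving for $E^*_+(x,w)$ yields the first formula.

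The identity for $E^*_-$ emerges by running the same argument on the reflected rectangle with vertices $\pm i\eta_0$ and $-T \pm i\eta_0$ (now assuming $\im w \le -\nu_0$). The imaginary-axis edge is traversed in the opposite sense, so the $\Phi$-term picks up the opposite sign; the Heaviside factor becomes $H(-x)$; and the two horizontal sums swap their $e^{\pm 2\pi\eta_0(w-\lambda^*)}$ prefactors. Adding the resulting expressions for $E^*_+$ and $E^*_-$, the $\Phi$-terms and the $h_{\lambda^*}$-sums cancel pairwise, $H(x) + H(-x) = 1$, and one obtains $E^*_+(x,w) + E^*_-(x,w) = -2\pi i\,e^{2\pi i xw}$, which is the claimed relation.

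\textbf{Main obstacle.} The principal technical difficulty is the right-edge estimate: assumption (e) bounds $E_\pm$ only along a discrete sequence $\{t_n\}$ and with exponential growth $e^{c|t_n|}$, so the decay $e^{-2\pi t_n \im w}$ can absorb it only for $\im w > c/(2\pi)$, and the identity in the stated range $\im w \ge \nu_0$ must be recovered by meromorphic continuation of both sides. Secondary concerns are the interchange of sum and integral on the horizontal edges, justified by absolute convergence of the Dirichlet series on those lines, and the handling of a possible pole at $z = 0$ via an arbitrarily small semicircular indentation, which produces exactly the half-residues encoded by the primed summation convention.
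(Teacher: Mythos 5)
Your argument is correct and is essentially the paper's own proof: the paper evaluates $F_{\pm}(w)=\frac{1}{2\pi i}\int e^{2\pi i wz}E_-(x,z)\,dz$ over the two halves of the line $\im z=-\eta_0$ in two ways — termwise integration of the Dirichlet series versus shifting the contour to $\im z=\eta_0$ (picking up the residues at $x$ and $\lambda\ge 0$, the segment giving $\Phi$, the functional equation $E_+=-E_-$, and assumption (e) along $t_n$ to kill the far edge, with the range of $w$ then relaxed by analyticity) — which is exactly your rectangle $\Gamma_{t_n}$ decomposed into its four edges, together with the reflected contour for $E^*_-$ and the final addition of the two formulas. One remark: carried out carefully, your edge computations (like the paper's own) attach $e^{2\pi\eta_0(w-\lambda^*)}$ to the $\lambda^*\ge 0$ sum and $e^{-2\pi\eta_0(w-\lambda^*)}$ to the $\lambda^*\le 0$ sum, which is the arrangement that converges under (a); the opposite placement in the displayed statement appears to be a typo, so your loose phrase ``the $\lambda^*\ge 0$ sum of the theorem'' inherits that discrepancy but the computation itself is sound.
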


We see from the latter two expressions that the two functions $E_{\pm}^*(x,w)$ have
meromorphic extensions to $\mathbb{C}$. We also observe that in order to obtain the
desired counterpart to the functional equation {\color{blue} $E_+(x,z)=-E_-(x,z)$}, we should apply Fourier transformation in the
variable~$x$. Such a step would require some additional mild assumptions that we prefer
not to specify. If we take this step for granted and denote the Fourier transforms of
$x\mapsto E^*_{\pm}(x,w)$ by $\widehat{E^*_{\pm}}(\xi,w)$, then we get the desired
equation $\widehat{E^*_{+}}(\xi,w) =-\widehat{E^*_{-}}(\xi,w)$
and also that $\widehat{E^*_{\pm}}(\xi,w)$ has simple poles at~$\xi$ and
each point $-\lambda^*$ of $-\Lambda^*$ with respective residues~$\pm 1$
and~$\pm\widehat{h_{\lambda^*}}(\xi)$.

One could imagine situations in which assumption (d) fails, for instance because the nodes~$\lambda$ come arbitrarily close to each other. This could be remedied by using only assumption (e) to define $E_{\pm}^*(x,w)$ in a slightly more involved way in terms of convergence of sequences of partial sums. We find that nothing essential is lost by refraining from entering such technicalities.

\begin{proof}[Proof of Theorem~\ref{thm:dualker}]
We set
	\[ F_+(w)\coloneqq \frac{1}{2\pi i} \int_{-i\eta_0}^{\infty-i\eta_0} e^{2\pi i wz} E_-(x,z) dz \]
when $\im w>0$. This function is well-defined since $E_-(x,z)$ is uniformly bounded on the line of integration by assumption (a) of Theorem~\ref{thm:genker}. By absolute convergence of the Dirichlet series representation of $E_-(x,z)$, we may integrate termwise and obtain that
	\begin{equation} \label{eq:Fdef} F_+(w)=- \frac{1}{2\pi i} \sum_{\lambda^*\ge 0}\!\phantom{}^{'} h_{\lambda^*}(x)\frac{e^{2\pi \eta_0 (w-\lambda^*)}}{(w-\lambda^*)} .\end{equation}
Here the series on the right-hand side converges absolutely when $w$ is not one of the points~$\lambda^*$, and hence $F(w)$ extends to a meromorphic function in~$\CC$ with poles at~$\lambda^*$ with $\lambda^*\ge 0$. Adding the constraint that $\im w \ge \max(c,\nu_0)$, we now move the line of integration to $z=\xi+i\eta_0$, $\xi>0$. Using the functional equation $E_+(x,z)=-E_-(x,z)$ and assumption (e), we then find by the residue theorem that
	\begin{equation} \label{eq:Fsec}F_+(w)  =  -e^{2\pi i xw}H(x)-\sum_{\lambda\ge 0}\!\phantom{}^{'} g_{\lambda}(x) e^{2\pi i \lambda w}   - \frac{1}{2\pi i} \int_{i\eta_0}^{\infty+i\eta_0} e^{2\pi i wz} E_+(x,z) dz  + \frac{\Phi(x,w)}{2\pi i} .  \end{equation}
Hence using the Dirichlet series representation of $E_+(x,z)$ and integrating termwise in the first integral on the right-hand side of \eqref{eq:Fsec}, we obtain the alternate representation
	\begin{align*} F_+(w)  = & - e^{2\pi i xw}H(x)-\sum_{\lambda\ge 0}\!\phantom{}^{'} g_{\lambda}(x) e^{2\pi i \lambda w} \\ &
	+ \frac{1}{2\pi i} \sum_{\lambda^*\ge 0}\!\phantom{}^{'} h_{\lambda^*}(x)\frac{e^{-2\pi \eta_0 (w-\lambda^*)}}{(w-\lambda^*)}  + \frac{\Phi(x,w)}{2\pi i} . \end{align*}
Combining this with \eqref{eq:Fdef}, we find that
	\begin{align} \label{eq:Eplus} E_+^*(x,w)\coloneqq 2\pi i \sum_{\lambda\ge 0}\!\phantom{}^{'} g_{\lambda}(x) e^{2\pi i \lambda w}
	= & - 2\pi i e^{2\pi i xw}H(x)  +  \sum_{\lambda^*\ge 0}\!\phantom{}^{'} h_{\lambda^*}(x)\frac{e^{-2\pi \eta_0 (w-\lambda^*)}}{(w-\lambda^*)} \\
	& +  \sum_{\lambda^*\le 0}\!\phantom{}^{'} h_{\lambda^*}(x)\frac{e^{2\pi \eta_0 (w-\lambda^*)}}{(w-\lambda^*)}   + \Phi(x,w) , \nonumber \end{align}
which yields the required expression for $E_+^*(x,w)$.
By similar calculations applied to the function
	\[ F_-(w)\coloneqq \frac{1}{2\pi i} \int_{-\infty-i\eta_0}^{-i\eta_0} e^{2\pi i wz} E_-(x,z) dz \]
for $\im w \le \max(c,\nu_0)$, we arrive at the representation
	\begin{align*}
	E_-^*(x, w) = & - 2\pi i e^{2\pi i xw}H(-x)
	- \sum_{\lambda^*\ge 0}\!\phantom{}^{'} h_{\lambda^*}(x)\frac{e^{-2\pi \eta_0 (w-\lambda^*)}}{(w-\lambda^*)} \\
	& - \sum_{\lambda^*\le 0}\!\phantom{}^{'} h_{\lambda^*}(x)\frac{e^{2\pi \eta_0(w- \lambda^*)}}{(w-\lambda^*)}  - \Phi(x,w).
	\end{align*}
Combining this formula with \eqref{eq:Eplus}, we obtain the required
relation between $E_+^*(x,w)$ and $E_-^*(x,w)$.
\end{proof}

\subsection{Examples}$\phantom{}$
We illustrate the above discussion with two examples where the corresponding
Fourier interpolation identity is known: the Whittaker--Shannon interpolation
formula and the Fourier interpolation formula from~\cite{RV}.
Theorem~\ref{thm:zeta}, one of the main results of this
paper, yields a third example that will be treated in Section~\ref{sec:zetakernels}; a large family of related formulas will then be presented in the subsequent Section~\ref{subsect:other}.
\subsubsection{The Paley--Wiener case.}
Suppose that $f$ is such that the periodized function
	\[ F(y)\coloneqq \sum_{n\in \mathbb{Z}} \widehat{f}(y+n) \]
is well-defined and in $L^2(-1/2,1/2) $. Then we may express $f$ as
	\begin{align*} \label{eq:gencard} f(x) & =\sum_{n \in  \mathbb{Z}} f(n) \operatorname{sinc}(\pi(x-n)) + \int_{-\infty}^{\infty} \left(\widehat{f}(y)-F(y) \mathds{1}_{[-1/2,1/2]}(y)\right)  e^{2\pi i xy} dy \\
	\nonumber & =  \sum_{n \in  \mathbb{Z}} f(n) \operatorname{sinc}(\pi(x-n)) + \int_{|y|\ge 1/2} \widehat{f}(y)e^{2\pi i xy}\Big(1-\sum_{n\neq 0}\mathds{1}_{[n-1/2,n+1/2]}(y)e^{-2\pi i xn}\Big) dy.\end{align*}
We may think of this formula as representing the degenerate case when
$\Lambda=\mathbb{Z}$ and $\Lambda^*=(-\infty,-1/2]\cup [1/2,\infty)$.
The associated kernels are
	\begin{align*} E_{\pm}(x,z) & \coloneqq 2\pi i\int_{\mp y \ge 1/2} e^{-2\pi i y(z-x)}\Big(1-\sum_{n\neq 0}\mathds{1}_{[n-1/2,n+1/2]}(y)e^{-2\pi i xn}\Big) dy ,\\
	\widehat{E^*_{\pm}}(\xi,w) & : =\mathds{1}_{[-1/2,1/2]}(\xi)\Big(\pi i+2\pi i \sum_{n=1}^{\infty} e^{\pm 2\pi i  n (w-\xi)}\Big). \end{align*}
We may in this case compute their meromorphic continuations explictly:
	\begin{align*} E_{\pm}(x,z) & = \mp \Big(\frac{e^{\pi i (z-x)}}{z-x} - \frac{\pi e^{-\pi i z}}{\sin \pi z} \operatorname{sinc} \pi(z-x) \Big),\\
	\widehat{E^*_{\pm}}(\xi,w) & =\pm \pi  \mathds{1}_{[-1/2,1/2]}(\xi) \cot\pi(w-\xi). \end{align*}
The latter kernel has a pole of residue $\pm 1$ at $\xi$; all other poles are located in $\Lambda^*$, and the collection of all such poles when $\xi$ varies in $[-1/2,1/2]$ is indeed the entire set $\Lambda^*$.

\subsubsection{The $\sqrt{n}$ case.}
We can reinterpret the results of~\cite{RV} in terms of Dirichlet series
kernels as follows.
As $\Lambda=\Lambda^*=\{\pm\sqrt{n}\}_{n\in\ZZ}$, all of the identities
can be symmetrized over $x\mapsto -x$. In particular, this implies that
we may assume $E_{-}(x,z)=E_{+}(-x,-z)$. It is convenient to split the kernels
into even and odd parts. First, we look at the even kernels
$\frac{1}{2}(E_{+}(x,z)+E_{+}(-x,z))$.
Since for all even Schwartz functions $f$ we have
	\[f(x) = \sum_{n\ge0}a_n(x)f(\sqrt{n})
	+\sum_{n\ge0}\widehat{a_n}(x)\widehat{f}(\sqrt{n}),\]
we get
	\begin{align*}
	\frac{1}{2}(E_{+}(x,z)+E_{+}(-x,z))
	= 2\pi i\sum_{n\ge0}\widehat{a_n}(x)e^{2\pi i \sqrt{n}z}
	,\\
	\frac{1}{2}(E_{+}^{*}(x,z)+E_{+}^{*}(-x,z))
	= 2\pi i\sum_{n\ge0}a_n(x)e^{2\pi i \sqrt{n}z}
	.
	\end{align*}
Theorem~\ref{thm:genker} and Theorem~\ref{thm:dualker} in this case tell
us that the functions $z\mapsto E_{+}(x,z)$ and $z\mapsto E_{+}^{*}(x,z)$,
given by a general Dirichlet series over~$e^{2\pi i \sqrt{n} z}$, $n\ge0$ in the upper
half-plane extend to meromorphic functions in~$\CC$ with simple poles
at $\pm\sqrt{n}$, as well as a simple pole at $\pm x$. Moreover, the
analytic extension to the lower half-plane in each case is given by a general Dirichlet series over~$e^{-2\pi i \sqrt{n} z}$, $n\ge0$.

One can treat the odd kernels $\frac{1}{2}(E_{+}(x,z)-E_{+}(-x,z))$
similarly. In this case we use the interpolation formula for odd
functions~\cite[Thm.~7]{RV}
	\[f(x)= c_0(x)\frac{f'(0)+i\change{\hat{f}'(0)}}{2}
	+\sum_{n\ge1}c_n(x)\frac{f(\sqrt{n})}{\sqrt{n}}
	-\sum_{n\ge1}\widehat{c_n}(x)\frac{\widehat{f}(\sqrt{n})}{\sqrt{n}}.\]
From this we obtain
	\begin{align*}
	\frac{1}{2}(E_{+}(x,z)-E_{+}(-x,z))
	= (2\pi i)\big(-\widehat{c_0}(x)(\pi i z)-\sum_{n\ge 1}
	\frac{\widehat{c_n}(x)}{\sqrt{n}}e^{2\pi i \sqrt{n}z}\big)
	\end{align*}
and an analogous expression for the dual odd
kernel~$\frac{1}{2}(E_{+}^{*}(x,z)-E_{+}^{*}(-x,z))$.
A new feature in the odd case is a pole of order two at $z=0$, which
corresponds to the fact that the interpolation formula involves~$f'(0)$
and~\change{$\hat{f}'(0)$}.

\subsection{The joint density of $\Lambda$ and $\Lambda^*$}
We now come to a basic necessary condition for existence of formulas like \eqref{eq:fex}, that was recently established by Kulikov \cite{Ku}. This condition yields a joint bound for the two counting functions $N_{\Lambda}(T)$ and $N_{\Lambda^*}(W)$, which we define as the number of points from the respective sequences to be found in the two intervals $[-T,T]$ and $[-W,W]$. Kulikov made the  assumptions that $N_{\Lambda}(T)\ll T^L$ for some positive integer $L$ and that the functions $g_{\lambda}(x)$ be polynomially bounded in the two variables $\lambda$ and $x$. Assuming also the validity of \eqref{eq:fex} for all functions $f$ with $C^\infty$-smooth and compactly supported Fourier transform, he showed that for every $\eta>0$, there exists a positive constant $C$ such that
	\begin{equation} \label{eq:TF} N_{\Lambda}(T) +N_{\Lambda*}(W)\ge 4WT -C \log^{2+\eta}(4WT)\end{equation}
holds whenever $W, T \ge 1$. This result relies on sharp estimates of Karnik, Romberg, and Davenport
\cite{KRD} for the eigenvalue distribution of time-frequency
localization operators. We may view \eqref{eq:TF} as a manifestation of
the uncertainty principle as discussed for instance in the work of
Slepian~\cite{S}. To simplify matters, we have again suppressed the
possibility that $\Lambda$ and $\Lambda^*$ be multi-sets which however
is accounted for in~\cite{K}.

We observe that in the $\sqrt{n}$ case, $N_{\Lambda}(T)=2T^2+O(1)$ and $N_{\Lambda^*}(W)=2W^2+O(1)$, so that \eqref{eq:TF} holds since
$T^2+W^2\ge 2WT$. To relate Kulikov's bound to Theorem~\ref{thm:zeta}, we let $\Lambda$ consist of the points $(\rho-1/2)/i$ and $\Lambda^*$ be the sequence of points $\pm (\log n)/(4\pi)$ for $n\ge 1$. Then $N_{\Lambda}(T)=2N(T)$ and $N_{\Lambda^*}(W)=e^{4\pi W}+O(1)$, where we in the first relation use the standard notation $N(T)$ for the usual counting function for the nontrivial zeros of~$\zeta(s)$. Then \eqref{eq:TF} yields
	\[ 2N(T)+e^{4\pi W}\ge 4WT -C \log^{2+\eta} (4WT). \]
If we now set $W=(\log T-\log (2\pi))/(4\pi)$, then we get
	\[ N(T)\ge \frac{T}{2\pi} \log \frac{T}{2\pi e} - C \log^{2+\eta} T , \]
which clearly holds in view of the Riemann-von Mangoldt formula
	\begin{equation} \label{eq:RvM}
	N(T) = \frac{T}{2\pi} \log \frac{T}{2\pi e} + O(\log T).
	\end{equation}
There is a similar precise relation between Kulikov's bound \eqref{eq:TF} and the Riemann-von Mangoldt formula for any $L$-function to which the methods developed in this paper apply. We will return to this point in Subsection~\ref{subsect:other}.

We should like to emphasize that Kulikov does not assume minimality of the system of functions $g_{\lambda}(x)$ and $h_{\lambda^*}(x)$. It seems reasonable to expect that an assumption about minimality should imply a sparseness condition that would complement \eqref{eq:TF}. It would be interesting to see if a general version of the Riemann--von Mangoldt formula \eqref{eq:RvM} (though with a less precise remainder term) could be obtained as a consequence of \eqref{eq:TF} along with such a sparsity condition.

\subsection{Fourier interpolation and crystalline measures} It is immediate that a formula like \eqref{eq:fex} should imply that the distributional Fourier transform of
\[ \mu_x:=\delta_x - \sum_{\lambda\in \Lambda} g_{\lambda}(x) \delta_{\lambda} \]
will be
\[ \widehat{\mu_x}=\sum_{\lambda^*\in \Lambda*} h_{\lambda^*}(x) \delta_{\lambda^*},\]
where as usual $\delta_{\xi}$ is \change{the unit mass} at the point $\xi$. This means
that any Fourier interpolation formula as a byproduct generates a whole family of measures
that are crystalline in an appropriate sense. We refer to \cite{KS, LO, Me} for some
interesting recent results on crystalline measures and Fourier quasicrystals. It is our
impression that the results of the present paper, while perhaps shedding some light on
Dyson's thoughts on the Riemann hypothesis in his acclaimed lecture~\change{\cite{Dy}},
adds further evidence to the common belief (see \cite{KS}) that a classification of such
measures would
probably be very difficult to obtain.

\section{Modular integrals for the theta group}
\label{sec:modint}
In this section we construct a family of special functions (modular integrals)
on the complex upper half-plane $\HH\coloneqq\{\tau\in\CC\colon \im \tau>0\}$
whose Mellin transforms form the building blocks for our
Dirichlet series kernels.

The definition of these functions is most naturally viewed in terms of
Eichler cohomology for the theta group. Nevertheless, they have a simple
elementary description: we are interested in 2-periodic analytic functions
$F\colon\HH\to\CC$ that are of moderate growth (see below) and satisfy
    \begin{equation}\label{eq:feq-preliminary}
    F(\tau) - \eps(\tau/i)^{-k}F(-1/\tau)
    = (\tau/i)^{-s} - \eps(\tau/i)^{s-k}.
    \end{equation}
Here $\eps\in\{\pm1\}$, $k\ge 0$, and $s\in\CC$.
In what follows, we will always interpret the expression $(z/i)^{\alpha}$
for $z\in\HH$ as the principal branch, i.e., $(z/i)^{\alpha}$ takes
the value $x^{\alpha}$ for $z=ix$, $x>0$
(equivalently, $(z/i)^{\alpha}=e^{\alpha\,\mathrm{Log}(z/i)}$).

The condition~\eqref{eq:feq-preliminary} is not sufficient to uniquely pinpoint
the function $F$. Nevertheless, it determines~$F$ uniquely modulo a
finite-dimensional space of modular forms if we additionally require~$F$ to be
of moderate growth. Following Knopp~\cite{K3}, we say that
a function $\phi\colon\HH\to\CC$ is of moderate growth if
    \[\phi(\tau) \ll \im(\tau)^{-\alpha}+|\tau|^{\beta},\qquad \tau\in\HH,\]
where $\alpha$ and $\beta$ are some positive constants.
Equivalently, $\phi$ is of moderate growth if and only if for some $r>0$
we have $|\phi(i\frac{1-z}{1+z})|\ll(1-|z|)^{-r}$ for all~$z$ in the unit
disk $|z|<1$.

For a 2-periodic function $F$ moderate growth is tantamount to
having a Fourier expansion
	\[F(\tau) = \sum_{n\ge 0}a_ne^{\pi i n\tau},\qquad \tau\in\HH,\]
where the sequence $\{a_n\}_{n\ge0}$ has polynomial growth.
To make the solution unique, we require in addition that
the first few coefficients $a_n$ vanish. More precisely,
we require $a_n=0$ for $n<\nu_{\eps}$, where we set
    \begin{equation} \label{eq:nupmdef}
    \nu_{-}=\nu_{-}(k)\coloneqq \Big\lfloor\frac{k+2}{4}\Big\rfloor,\qquad
    \nu_{+}=\nu_{+}(k)\coloneqq \Big\lfloor\frac{k+4}{4}\Big\rfloor.
    \end{equation}

\begin{theorem}
    \label{thm:modint}
    If $\phi\colon\HH\to\CC$ is an analytic function of moderate growth,
    then for any $k\ge0$ and $\eps\in\{\pm 1\}$ there exists a unique
    2-periodic analytic function $F\colon\HH\to\CC$	of moderate growth with
    a Fourier expansion of the form
    \[F(\tau) = \sum_{n\ge \nu_{\eps}}a_ne^{\pi i n \tau},
    \qquad \tau\in\HH\]
    such that
    \begin{equation}
    \label{eq:fksfeq_general}
    F(\tau) - \eps(\tau/i)^{-k}F(-1/\tau) =
    \phi(\tau) - \eps(\tau/i)^{-k}\phi(-1/\tau).
    \end{equation}
\end{theorem}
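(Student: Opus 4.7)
The plan is to prove uniqueness by identifying the difference of two solutions with a holomorphic modular form for the theta group $\Gamma_\theta$ that must vanish, and to prove existence by first constructing any 2-periodic $\widetilde F$ satisfying \eqref{eq:fksfeq_general} and then subtracting a modular form to enforce the prescribed Fourier vanishing.

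For uniqueness, suppose $F_1$ and $F_2$ both satisfy the conclusion. Then $G\coloneqq F_1-F_2$ is 2-periodic, of moderate growth, satisfies $G(\tau)=\eps(\tau/i)^{-k}G(-1/\tau)$, and has Fourier expansion vanishing up to order $\nu_\eps$. This is exactly the statement that $G$ is a holomorphic modular form of weight $k$ on $\Gamma_\theta$ with multiplier determined by $\eps$, vanishing at the cusp $\infty$ to order at least $\nu_\eps$. The valence formula on $\HH/\Gamma_\theta$, which has cusps at $\infty$ (width $2$) and at $1$ (width $1$) together with an elliptic point of order $2$ at $i$, combined with the parity constraint imposed by $\eps$ at the elliptic point, forces $G\equiv 0$. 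The integers $\nu_\pm(k)$ in \eqref{eq:nupmdef} are chosen precisely so that this conclusion holds.

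For existence, I would follow the Knopp--Radchenko--Viazovska philosophy in two stages. First, produce a 2-periodic analytic $\widetilde F$ of moderate growth satisfying \eqref{eq:fksfeq_general} but without any Fourier vanishing. A direct attempt is the Poincar\'e-type sum
\[\widetilde F(\tau) = \sum_{\gamma\in \langle T^2\rangle\backslash \Gamma_\theta} \chi_\eps(\gamma)\,\phi|_k\gamma(\tau),\]
convergent for sufficiently large $k$. For general $k\ge 0$ one regularizes this sum by realizing $\widetilde F$ as a contour integral
\[\widetilde F(\tau) = \int_{\mathcal C} K(\tau,z)\,\phi(z)\,dz\]
against a kernel $K(\tau,z)$ that is weakly modular in $z$ for $\Gamma_\theta$ with the appropriate weight and multiplier and is 2-periodic analytic in $\tau$. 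Shifting $\mathcal C$ across the poles of $K$ invokes the modular functional equation of $K$ in $z$ and produces exactly the cocycle $\phi(\tau)-\eps(\tau/i)^{-k}\phi(-1/\tau)$ on the right-hand side of \eqref{eq:fksfeq_general}; moderate growth of $\widetilde F$ then follows from that of $\phi$ together with sufficient decay of $K$.

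In the second stage, correct the Fourier expansion: write $\widetilde F(\tau)=\sum_{n\ge 0}b_n e^{\pi i n\tau}$ and subtract the modular form $M\in M_k(\Gamma_\theta,\eps)$ whose Fourier coefficients agree with those of $\widetilde F$ for $n<\nu_\eps$, which exists and is uniquely determined by the same dimension count that underlies uniqueness. Then $F\coloneqq \widetilde F-M$ has the required Fourier expansion, still satisfies \eqref{eq:fksfeq_general} since $M$ contributes zero to $F-\eps(\tau/i)^{-k}F(-1/\tau)$, and is 2-periodic and of moderate growth. The main obstacle I expect is the construction and analytic control of the kernel $K$: one must exhibit a weakly modular function on $\Gamma_\theta$ with enough decay to make the contour integral converge for every $\phi$ of moderate growth while possessing precisely the right residues to reproduce the desired cocycle, and this is where the substantive work of Section~\ref{sec:modint} must be concentrated.
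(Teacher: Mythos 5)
Your overall architecture matches the paper's: uniqueness because a $2$-periodic moderate-growth solution of the homogeneous equation is an element of $M_k(\Gamma_{\theta},\eps)$ whose Fourier expansion starts at $\nu_{\eps}$ and hence vanishes, and existence via a contour integral against a two-variable modular kernel. But the existence half has a genuine gap exactly where you flag it: you never construct the kernel $K(\tau,z)$, and for a statement valid for \emph{all} real $k\ge0$ and \emph{every} $\phi$ of moderate growth this construction is the substance of the proof. The paper writes it down explicitly in \eqref{eq:modkernels}: $\mathcal{K}_k^{\pm}(\tau,z)$ is assembled from $\theta$, the Hauptmodul $J$ and $J_-$, and the factor $J^{\nu_{\pm}}(z)/J^{\nu_{\pm}}(\tau)$ does two jobs at once --- it forces the $q$-expansion in $\tau$ to begin at $e^{\pi i \nu_{\pm}\tau}$ (so no second ``subtract a modular form'' stage is needed), and it makes $\mathcal{K}_k^{\pm}(\tau,z)$ decay rapidly as $z\to\pm1$, which is precisely what lets $\int_{-1}^{1}\mathcal{K}_k^{\pm}(\tau,z)\phi(z)\,dz$ converge for an arbitrary moderate-growth $\phi$; the identity \eqref{eq:jderiv} gives residue $(\pi i)^{-1}$ at $z=\tau$, so deforming the contour produces exactly the cocycle in \eqref{eq:fksfeq_general} (Proposition~\ref{prop:modkereq}). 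Moreover, moderate growth of the resulting $F$ does not simply ``follow from decay of $K$'': the integral representation is valid only on $\bF$, the extension to $\HH$ goes through the cocycle relations, and the paper must invoke Knopp's theorem~\cite{K4} on the growth of automorphic integrals (refined later by the estimates of Section~\ref{sec:estimates}) to conclude moderate growth. As written, your existence argument postulates the object whose construction is the theorem's real content.

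Two further points. First, your ``direct attempt'' $\sum_{\gamma\in\langle T^2\rangle\backslash\Gamma_{\theta}}\chi_{\eps}(\gamma)\,\phi|_k\gamma$ cannot work as stated: if it converges absolutely it is invariant under the full slash action of $\Gamma_{\theta}$, so it satisfies the \emph{homogeneous} equation $F-\eps(\tau/i)^{-k}F(-1/\tau)=0$ rather than \eqref{eq:fksfeq_general}; the Poincar\'e series that is actually relevant (see the remark following Proposition~\ref{prop:modkereq}) is a two-variable series for the kernel, not an average of $\phi$. Second, your uniqueness argument via the valence formula would have to be carried out for real weight $k$ with the multiplier $j_{\theta}^{2k}\chi_{\eps}$, where the cusp and elliptic contributions depend on the multiplier; you assert, but do not verify, that $\nu_{\eps}(k)$ from \eqref{eq:nupmdef} is the correct cutoff. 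The paper sidesteps this by writing any $f\in M_k(\Gamma_{\theta},\eps)$ as $\theta^{2k}J_{\eps}\,p(1/J)$ and reading the degree bound off the expansions \eqref{eq:asymptexp} at the cusp $1$ (Proposition~\ref{prop:mfdim}); your route is plausible but, as presented, an assertion rather than a proof. Your second-stage correction (subtracting the unique $M\in M_k(\Gamma_{\theta},\eps)$ matching the first $\nu_{\eps}$ coefficients) is fine once Proposition~\ref{prop:mfdim} is in place, and is a legitimate alternative to the paper's device of building the coefficient vanishing directly into the kernel.
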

The proof of uniqueness will come as a simple corollary of some basic
properties of modular forms for the theta group (see Proposition~\ref{prop:mfdim}), while the proof of existence follows from Proposition~\ref{prop:modkereq}.

Let us denote the function~$F$ from Theorem~\ref{thm:modint}
by $F_{k}^{\eps}(\tau,\phi)$. Since $\phi_s(\tau)=(\tau/i)^{-s}$ is of
moderate growth in~$\HH$ for any $s\in\CC$, there is a unique function
$F_k^{\pm}(\tau,s)\coloneqq F_k^{\pm}(\tau,\phi_s)$ with a Fourier expansion
	\begin{equation*}
	F_k^{\pm}(\tau,s) = \sum_{n\ge \nu_{\pm}}
	\alpha_{n,k}^{\pm}(s)e^{\pi i n \tau}
	\end{equation*}
such that
	\begin{equation*}
	F_k^{\eps}(\tau,s) - \eps(\tau/i)^{-k}F_k^{\eps}(-1/\tau,s) =
	(\tau/i)^{-s} - \eps(\tau/i)^{s-k}.
	\end{equation*}
This is exactly the function that we are interested in.

\begin{remark}
For $k>2$ the existence part of Theorem~\ref{thm:modint} follows from the
results of Knopp on Eichler cohomology~\cite{K3}. Instead of this we use a
construction with contour integrals as in~\cite{RV}
(in Section~\ref{sec:modkernels} below we will sketchily explain the motivation
behind this construction). The main reason for doing this is, first, because
the construction works for all $k\ge0$, and second, since it can be used to
give relatively good estimates for the size of the coefficients
$\alpha_{n,k}^{\pm}(s)$ as $n\to\infty$, at least in the range $0\le k\le 2$.

Let us also note that for $k=0$ the existence of the decomposition
\eqref{eq:fksfeq_general} is related to the result of Hedenmalm
and Montes-Rodriguez~\cite{HMR} that the system of functions $e^{i\pi nx}$,
$e^{i\pi n/x}$, $n\in \ZZ$ is weak-star complete in~$L^{\infty}(\RR)$.
\end{remark}

\subsection{Preliminaries on the theta group}
The group $\SL_2(\RR)$ of $2\times 2$ real matrices with determinant~1
acts in the usual way on the upper half-plane~$\HH$ by
    \[\gamma \tau = \frac{a\tau+b}{c\tau+d},\quad
    \gamma=\pmat abcd\in\SL_2(\RR).\]
Since the matrix $-I=(\smat{-1}{0}{0}{-1})$ acts trivially, we will
work with the group $\PSL_2(\RR)\coloneqq \SL_2(\RR)/\{\pm I\}$ instead, but we
still prefer to write the elements of $\PSL_2(\RR)$ as matrices.
Let us denote
    \[S\coloneqq \pmat{0}{-1}{1}{0}, \quad T\coloneqq \pmat1101.\]

The theta group $\Gamma_{\theta}\subset \PSL_2(\ZZ)$ is the subgroup
generated by $S$ and $T^2$.
The group~$\Gamma_{\theta}$ consists of all the elements of $\PSL_2(\ZZ)$
congruent to $(\smat 1001)$ or $(\smat 0110)$ modulo~2
(see~\cite[p.7 Cor.~4]{K5}).
The only relation between the generators of~$\Gamma_{\theta}$
is $S^2=1$. This implies that any element $\gamma\in\Gamma_{\theta}$ can
be written in a unique way as
$\gamma=S^{\eps_0}T^{2m_1}ST^{2m_2}\dots ST^{2m_k}S^{\eps_1}$,
where $\eps_j\in\{0,1\}$, which we call the canonical word
or the canonical representation for $\gamma$.

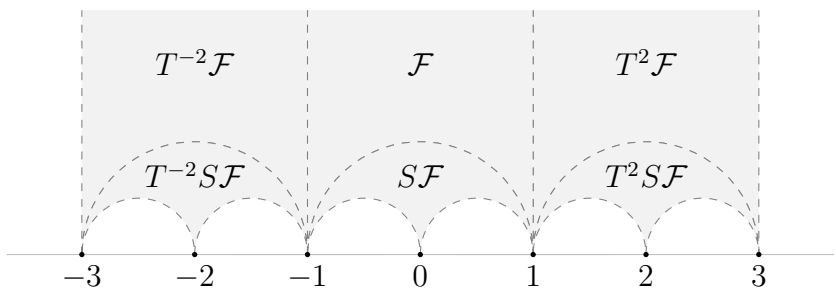
\begin{figure}[h]
    \centering
    \begin{tikzpicture}
    \definecolor{cv0}{rgb}{0.95,0.95,0.95}
    \definecolor{cv1}{rgb}{0.90,0.90,0.90}
    \clip(-9,-0.5) rectangle (7,3.5);

    \begin{scope}[scale=0.5, xshift=5cm]
    \draw[lightgray] (-5,0) -- (5,0);
    \fill[color=cv0]  (3,6.5) -- (3,0) arc (0:180:3) -- (-3,6.5);
    \draw[gray,dashed] (-3,0)  --  (-3,6.5);
    \draw[gray,dashed] (3,0)  --  (3,6.5);
    \draw[gray,dashed] (-3,0) arc  (180:0:3);
    \fill[color=cv0]  (-3,0) arc (180:0:3) arc (0:180:1.5) arc (0:180:1.5);
    \draw[gray,dashed]  (-3,0) arc (180:0:3) arc (0:180:1.5) arc (0:180:1.5);
    \draw (0,1.5) node[above]{$T^2S\bF$};
    \draw (0,4.5) node[above]{$T^2\bF$};
    \end{scope}

    \begin{scope}[scale=0.5, xshift=-7cm]holomorphic
    \draw[lightgray] (-5,0) -- (5,0);
    \fill[color=cv0]  (3,6.5) -- (3,0) arc (0:180:3) -- (-3,6.5);
    \draw[gray,dashed] (-3,0)  --  (-3,6.5);
    \draw[gray,dashed] (3,0)  --  (3,6.5);
    \draw[gray,dashed] (-3,0) arc  (180:0:3);
    \fill[color=cv0]  (-3,0) arc (180:0:3) arc (0:180:1.5) arc (0:180:1.5);
    \draw[gray,dashed]  (-3,0) arc (180:0:3) arc (0:180:1.5) arc (0:180:1.5);
    \draw (0,1.5) node[above]{$T^{-2}S\bF$};
    \draw (0,4.5) node[above]{$T^{-2}\bF$};
    \end{scope}

    \begin{scope}[scale=0.5, xshift=-1cm]
    \draw[lightgray] (-5,0) -- (5,0);
    \fill[color=cv0]  (3,6.5) -- (3,0) arc (0:180:3) -- (-3,6.5);
    \draw[gray,dashed] (-3,0)  --  (-3,6.5);
    \draw[gray,dashed] (3,0)  --  (3,6.5);
    \draw[gray,dashed] (-3,0) arc  (180:0:3);
    \draw (0,4.5) node[above]{$\bF$};
    \fill[color=cv0]  (-3,0) arc (180:0:3) arc (0:180:1.5) arc (0:180:1.5);
    \draw[gray,dashed]  (-3,0) arc (180:0:3) arc (0:180:1.5) arc (0:180:1.5);
    \draw (0,1.5) node[above]{$S\bF$};
    \fill[black] (-9,0) circle (0.07) node[below] {$-3$};
    \fill[black] (-6,0) circle (0.07) node[below] {$-2$};
    \fill[black] (-3,0) circle (0.07) node[below] {$-1$};
    \fill[black] (0,0) circle (0.07) node[below] {$0$};
    \fill[black] (3,0) circle (0.07) node[below] {$1$};
    \fill[black] (6,0) circle (0.07) node[below] {$2$};
    \fill[black] (9,0) circle (0.07) node[below] {$3$};
    \end{scope}
    \end{tikzpicture}
    \label{fig:gammathetadomain}
    \caption{Fundamental domain for $\Gamma_{\theta}$ and some of its translates}
\end{figure}

A fundamental domain for $\Gamma_{\theta}$ is given by
(see Figure~\ref{fig:gammathetadomain})
    \[\bF \coloneqq \{z\in\HH \colon -1<\re z<1 ,\, |z|>1\}.\]
Since $\bF$ is a fundamental domain for the group $\Gamma_{\theta}$,
for any $\tau\in\HH$ there exists an element
$\gamma=\gamma_{\tau}\in\Gamma_{\theta}$ such that
$\gamma\tau$ is in $\ol{\bF}$. Moreover, if $\tau$ does not
belong to the set $\bigcup_{\gamma\in\Gamma_{\theta}}\partial \bF$
(which is nowhere dense and of measure $0$), then the
element $\gamma$ is unique, and otherwise there are
at most two such elements: $\{\gamma,S\gamma\}$
or $\{\gamma,T^{2}\gamma\}$.
The element $\gamma_{\tau}$ can be found by repeatedly performing the following operation: first apply some power of $T^{2}$ to get $\tau$ into the
strip $\{|\re \tau|\le1\}$, and then, if the resulting point is not yet in
the fundamental domain, apply the inversion~$S$.

\subsection{Modular forms for the theta group}
In this subsection we will collect the necessary basic facts about
modular forms for the theta group. A more detailed
exposition can be found in~\cite[Ch.~6]{BK}.

Let $\theta(\tau)$ be the Jacobi theta function
    \begin{equation*} \label{eq:thetadef}
    \theta(\tau) \coloneqq \sum_{n\in \ZZ}e^{\pi i n^2 \tau}.
    \end{equation*}
The function~$\theta\colon\HH\to\CC$ is holomorphic,
and it satisfies the transformations
    \[(\tau/i)^{-1/2}\theta(-1/\tau) = \theta(\tau)
    ,\qquad \theta(\tau+2)=\theta(\tau),\]
which correspond to the two generators of the theta group~$\Gamma_{\theta}$.
More generally, for any $\gamma=(\smat abcd)\in\Gamma_{\theta}$
with $c>0$ or $c=0,d>0$, we have
    \[\theta(\tau) =
    \zeta_\gamma(c\tau+d)^{-1/2}\theta\Big(\frac{a\tau+b}{c\tau+d}\Big),\]
where $(c\tau+d)^{-1/2}$ is the principal branch and $\zeta_{\gamma}$
is a certain $8$th root of unity that can be written explicitly
in terms of Jacobi symbols (see~\cite[Th.~7.1]{M}). Finally,
as a corollary of the Jacobi triple product identity, one has
$\theta(\tau) = \frac{\eta^5(\tau)}{\eta^2(2\tau)\eta^2(\tau/2)}$,
where $\eta(\tau)\coloneqq q^{1/24}\prod_{n\ge1}(1-q^n)$ is
the Dedekind eta function, and thus we see that $\theta(\tau)$ does not vanish
anywhere in~$\HH$. Here and in what follows we define the nome~$q$ by
$q\coloneqq e^{2\pi i \tau}$ and for arbitrary rational number $r$
we will interpret $q^r$ as $e^{2\pi i r\tau}$.

\subsubsection{The theta automorphy factor.}
We define the theta automorphy factor $j_{\theta}(\tau,\gamma)$ by
    \[j_{\theta}(\tau,\gamma) \coloneqq  \frac{\theta(\tau)}{\theta(\gamma\tau)}
    ,\qquad \gamma\in\Gamma_{\theta}.\]
It satisfies $j_{\theta}(\tau,\gamma_1\gamma_2)=j_{\theta}(\tau,\gamma_2)j_{\theta}(\gamma_2\tau,\gamma_1)$ and $j_{\theta}(\tau,\gamma)^8=(c\tau+d)^{-4}$. We define the
slash operator in weight $k$ with theta automorphy factor by
    \[(f|_{k}\gamma)(\tau) = j_{\theta}^{2k}(\tau,\gamma)f(\gamma\tau).\]
It is easy to see that this formula defines a right action of $\Gamma_{\theta}$
on the space of functions $f\colon\HH\to\CC$. More generally,
let $\chi_{\eps}\colon\Gamma_{\theta}\to\{\pm1\}$, where $\eps=\pm 1$
be the homomorphism defined by $\chi_{\eps}(T^2)=1$
and $\chi_{\eps}(S)=\eps$. We then define
    \[(f|_{k}^{\eps}\gamma)(\tau)
    := \chi_{\eps}(\gamma)j_{\theta}^{2k}(\tau,\gamma)f(\gamma\tau).\]
Note that all of the above definitions remain valid for real $k\ge0$
(and in fact for all complex~$k$),
if we interpret $j_{\theta}^{2k}(\tau,\gamma)$
as $\frac{\theta^{2k}(\tau)}{\theta^{2k}(\gamma\tau)}$
and $\theta^{2k}(\tau)$ using the principal branch, i.e.,
	\[
	\theta^{a}(\tau) \coloneqq  \exp\left(a\int_{i\infty}^{\tau}\frac{\theta'(z)}{\theta(z)}dz\right)
	,  \qquad   a\in\CC  .
	\]

\subsubsection{Modular forms for $\Gamma_{\theta}$.}
We define $M_k(\Gamma_{\theta},\eps)$ to be the space of holomorphic
modular forms of weight $k$ with respect to the above slash action, i.e.,
$f\colon\HH\to\CC$ is in $M_k(\Gamma_{\theta},\eps)$ if and only
if $f$ is a holomorphic function of moderate growth and $f|_k^{\eps}\gamma=f$
for all $\gamma\in\Gamma_{\theta}$.
We also denote by $M_k^{!}(\Gamma_{\theta},\eps)$ the space of
weakly holomorphic modular forms of weight $k$: a holomorphic function
$f\colon\HH\to\CC$ belongs to $M_k^{!}(\Gamma_{\theta},\eps)$
if $f|_k^{\eps}\gamma=f$ for all $\gamma\in\Gamma_{\theta}$
and its Fourier expansion at each of the cusps has at most finitely many
negative powers of $q$ (i.e.,~$f$ has at worst poles at the cusps).

If we let
$J(\tau)=J_{+}(\tau)\coloneqq\frac{16}{\lambda(\tau)(1-\lambda(\tau))}=(\frac{\theta(\tau)}{\eta(\tau)})^{12}$
and $J_-(\tau)\coloneqq1-2\lambda(\tau)$, where $\lambda(\tau)$
is the modular lambda invariant, then $J_{\pm}$ is in $M_0^{!}(\Gamma_{\theta},\pm)$.
Moreover, $J_+(\tau)$ is a Hauptmodul for the group~$\Gamma_{\theta}$ and
it maps the fundamental domain~$\bF$ conformally
onto the cut plane $\CC\sm(-\infty,64]$, as shown in Figure~\ref{fig:jconf}.
In particular, since $J(\tau)$ is a Hauptmodul, any
$f\in M_k^{!}(\Gamma_{\theta},\pm)$ can be written
as $f(\tau)=\theta^{2k}(\tau)J_{\pm}(\tau)P(J(\tau))$, where $P$ is some
Laurent polynomial. (A priori $P$ can be a rational function,
but since $f$ has poles only at the cusps and
$J(\tau)$ takes values $0$ and $\infty$ at the two cusps,
the poles of $P$ must be contained in $\{0,\infty\}$.)
Note that the identity
	\[M_k^{!}(\Gamma_{\theta},\pm) = \theta^{2k}J_{\pm}\CC[J,J^{-1}]\]
makes sense for all complex values of $k$ if we interpret $\theta^{2k}(\tau)$
as the principal branch.

\begin{figure}[h]
    \centering
    \begin{tikzpicture}
    \definecolor{cv0}{rgb}{0.95,0.95,0.95}
    \definecolor{cv1}{rgb}{0.90,0.90,0.90}
    \clip(-9,-0.5) rectangle (7,3.5);
    \begin{scope}[scale=0.5,xshift=-9cm]
    \draw[lightgray] (-5,0) -- (5,0);
    \fill[color=cv0] (-3,0) arc (180:90:3) -- (0,6.5) -- (-3,6.5);
    \fill[color=cv1] (0,3) arc (90:0:3) -- (3,6.5) -- (0,6.5);
    \draw[green] (-3,0)  --  (-3,6.5);
    \draw[green] (3,0)  --  (3,6.5);
    \draw[red] (-3,0) arc  (180:0:3);
    \draw[lightgray,dashed] (0,3)  --  (0,6.5);
    \draw (0.5,4.5) node[above]{$\bF$};

    \fill[black] (0,3) circle (0.07) node[below] {$i$};
    \fill[black] (-3,0) circle (0.07) node[below] {$-1$};
    \fill[black] (3,0) circle (0.07) node[below] {$1$};

    \draw (-4,4.5) node[above]{$\HH$};
    \end{scope}

    \draw [->] (-1.5,1) -- (0,1);
    \draw (-0.7,1) node[above] {$J$};

    \begin{scope}[scale=0.5,xshift=7cm]
    \fill[color=cv0] (-4,2.5) -- (4,2.5) -- (4,6.5) -- (-4,6.5);
    \fill[color=cv1] (-4,2.5) -- (4,2.5) -- (4,-4.5) -- (-4,-4.5);

    \draw[green] (-4,2.5)  --  (-1,2.5);
    \draw[red] (-1,2.5)  --  (1,2.5);
    \draw[lightgray,dashed] (1,2.5)  --  (4,2.5);
    \fill[black] (-1,2.5) circle (0.07) node[above] {$0$};
    \fill[black] (1,2.5) circle (0.07) node[above] {$64$};

    \draw[blue,dashed] (-1,2.5)  --  (-1,1.5);
    \draw[blue,dashed] (-1,1.5)  --  (1,1.5);
    \draw[blue,dashed] (1,1.5)  --  (1,2.5);
    \draw (0,0.6) node[above]{{\small $\ell$}};
    \draw (-3,4.5) node[above]{$\mathbb{C}$};
    \end{scope}
    \end{tikzpicture}
    \caption{$J(z)$ as a conformal map.}
    \label{fig:jconf}
\end{figure}
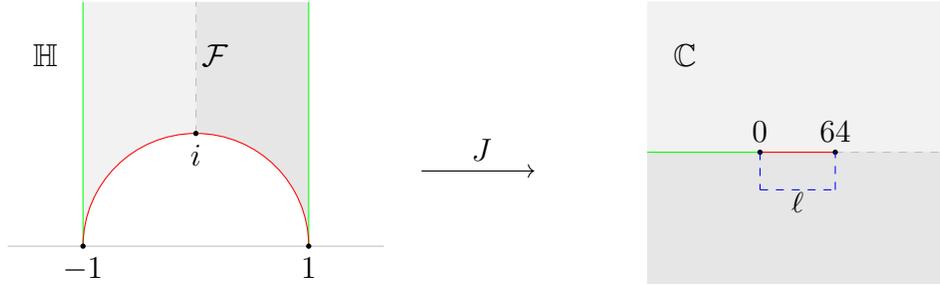

Since $J(\tau)$ has a pole at the cusp at $\infty$,
if $f\in M_k(\Gamma_{\theta},\eps)$, then
$f(\tau)=\theta^{2k}(\tau)J_{\eps}(\tau)p(1/J(\tau))$, where $p(x)\in\CC[x]$
is now a polynomial (without constant term if $\eps=+$). From
    \begin{equation} \label{eq:asymptexp}
    \begin{split}
    \tfrac{1}{2}(\tfrac{\tau}{i})^{-1/2}\theta(1-\tfrac{1}{\tau})
    &= q^{1/8}+q^{9/8}+q^{25/8}+\dots,\\
    -2^{-12}J(1-\tfrac{1}{\tau})   &= q+24q^2+300q^3+\dots,\\
    8\,J_-(1-\tfrac{1}{\tau}) &=
    q^{-1/2}+20q^{1/2}-62q^{3/2}+\dots
   	\end{split}
    \end{equation}
we see that
$f$ is in $M_k(\Gamma_{\theta},+)$ if and only if $\deg(p)\le \nu_{+}(k)$
and
$f$ is in $M_k(\Gamma_{\theta},-)$ if and only if $\deg(p)\le \nu_{-}(k)-1$.
Thus we get the following (see~\cite[Thm.~6.3]{BK}).
\begin{proposition}
    \label{prop:mfdim}
    We have $\dim M_k(\Gamma_{\theta},\eps)=\nu_{\eps}(k)$,
    where $\nu_{\pm}$ are defined in~\eqref{eq:nupmdef}.
    Moreover, any $f\in M_k(\Gamma_{\theta},\eps)$ with
    a Fourier expansion of the form
    $f(\tau)=\sum_{n\ge \nu_{\pm}}c_ne^{\pi i n\tau}$
    must vanish identically.
\end{proposition}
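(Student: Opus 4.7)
The plan is to combine the parameterization of $M_k(\Gamma_\theta,\eps)$ achieved in the preceding discussion with a triangular argument applied to the Fourier expansion at the cusp $\infty$; both assertions of the proposition then fall out of the same computation.

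First I would record the consequence of the preceding text: it yields an explicit linear isomorphism between $M_k(\Gamma_\theta,\eps)$ and a finite-dimensional space of polynomials. More precisely, every $f \in M_k(\Gamma_\theta,\eps)$ is uniquely of the form $f = \theta^{2k}\,J_\eps\,p(1/J)$ where $p \in \CC[x]$ is required to have no constant term when $\eps = +$, and to satisfy $\deg p \le \nu_+(k)$ when $\eps = +$, respectively $\deg p \le \nu_-(k)-1$ when $\eps = -$. Uniqueness of $p$ uses that $\theta$ has no zeros in $\HH$ (already noted above via the eta-product formula), that $J_\eps \not\equiv 0$, and that $J$ is a Hauptmodul, so $p(1/J) \equiv 0$ forces $p \equiv 0$. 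Counting free parameters gives $\CC$-dimension $\nu_+(k)$ (parameters $c_1,\dots,c_{\nu_+(k)}$) when $\eps = +$, and $\nu_-(k)$ (parameters $c_0,\dots,c_{\nu_-(k)-1}$) when $\eps = -$, which establishes $\dim M_k(\Gamma_\theta,\eps) = \nu_\eps(k)$.

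For the vanishing claim, I set $\tilde q \coloneqq e^{\pi i\tau}$ and record the leading behaviour at $\infty$: $\theta^{2k}(\tau) = 1 + O(\tilde q)$, $J(\tau) = \tilde q^{-1}(1 + O(\tilde q))$, and $J_{-}(\tau) = 1 + O(\tilde q)$ (the last because $\lambda(\tau) = O(\tilde q)$). Writing $f = \sum_j c_j \theta^{2k} J_\eps J^{-j}$, the $j$-th summand expands at $\infty$ as $c_j \tilde q^{j-1}(1 + O(\tilde q))$ when $\eps = +$, respectively $c_j \tilde q^{j}(1 + O(\tilde q))$ when $\eps = -$. Hence the linear map sending the coefficient vector of $p$ to the tuple of the first $\nu_\eps(k)$ Fourier coefficients of $f$ at $\infty$ is given by an upper-triangular matrix with $1$'s on the diagonal. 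Under the hypothesis $f(\tau) = \sum_{n \ge \nu_\eps(k)} c_n e^{\pi i n\tau}$, every entry on the input side of this triangular system vanishes, so back-substitution forces each $c_j = 0$; therefore $p \equiv 0$ and $f \equiv 0$.

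Essentially all of the heavy lifting has already been carried out in the preceding subsections; the one point requiring verification is that the diagonal of the triangular system is nonvanishing, and this is immediate from the fact that the leading coefficients in the $\tilde q$-expansions of $\theta^{2k}$, $J^{-1}$, and $J_\eps$ at $\infty$ all equal $1$.
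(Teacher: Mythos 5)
Your proposal is correct and takes essentially the same route as the paper: the dimension count comes from the parameterization $f=\theta^{2k}J_{\eps}\,p(1/J)$ with the degree bounds already derived from the cusp expansions in the preceding text, and the vanishing statement from the distinct leading orders of $\theta^{2k}J_{\eps}J^{-j}$ at the cusp $\infty$. The only difference is that you write out explicitly the triangular system forcing the coefficients of $p$ to vanish, a step the paper leaves implicit (deferring to \cite[Thm.~6.3]{BK}).
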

Note that this immediately implies uniqueness in Theorem~\ref{thm:modint},
since any two 2-periodic solutions of the functional
equation~\eqref{eq:fksfeq_general} differ by an element
of $M_k(\Gamma_{\theta},\eps)$, which must vanish by
Proposition~\ref{prop:mfdim}.

Finally, let us record some simple asymptotic relations between various
functions in the fundamental domain $\bF$. For $z\to i\infty$, we have
$\im (1-1/z)=\im (z)|z|^{-2}\asymp \im (z)^{-1}$
and $J(1-1/z)\sim -4096e^{2\pi i z}$,
so that $\log|J(z)|\asymp -\im (z)^{-1}$,
as $z$ tends to $\pm 1$ in the fundamental domain.
From this we deduce that, when expressed in terms of $w=J(z)$, as $w\to 0$
(which again corresponds to $z\to\pm1$ inside the fundamental domain),
we have $\im (z) \asymp \frac{1}{\log|w^{-1}|}$, and
therefore
    \[ |\theta(z)|^2 \asymp |w|^{1/4}\log|w^{-1}|.\]
Moreover, since $J_{-}(z)^2 = 1-64/J(z)$, we get that
$J_{-}(z) = \pm\sqrt{1-64/w}$.

We also record here the following identity
    \begin{equation} \label{eq:jderiv}
    J'(z) = {-\pi i}\,\theta^4(z)J(z)J_{-}(z) .
    \end{equation}
In particular, this implies that if we set $w=J(z)$, then
	\begin{equation} \label{eq:theta4dz}
	\theta^4(z)dz = \pi^{-1}w^{-1/2}(64-w)^{-1/2}dw.
	\end{equation}

\subsection{Modular kernels}
\label{sec:modkernels}
We define the following two-variable meromorphic functions
on the upper half-plane:
	\begin{align} \label{eq:modkernels}
	\begin{split}
	\mathcal{K}_k^{+}(\tau,z)
	&\coloneqq \theta^{2k}(\tau)\theta^{4-2k}(z)\frac{J^{\nu_+}(z)}{J^{\nu_+}(\tau)}\frac{J(\tau)J_{-}(z)}
	{J(\tau)-J(z)} =
	\sum_{n=\nu_{+}}^{\infty} g_{n,k}^{+}(z)q^{n/2}, \\
	\mathcal{K}_k^{-}(\tau,z)
	&\coloneqq \theta^{2k}(\tau)\theta^{4-2k}(z)\frac{J^{\nu_-}(z)}{J^{\nu_-}(\tau)}\frac{J(\tau)J_{-}(\tau)}
	{J(\tau)-J(z)} =
	\sum_{n=\nu_{-}}^{\infty} g_{n,k}^{-}(z)q^{n/2}.
	\end{split}
	\end{align}
Here we view the series on the right as formal power series in the variable
$q^{1/2}$. Note that by construction $\mathcal{K}_{k}^{\pm}(\tau,z)$ is a
meromorphic modular form of weight $k$ in $\tau$ (respectively of weight $2-k$ in $z$)
for each fixed $z$ (respectively~$\tau$). For fixed $\tau$ it has simple poles
for $z\in \Gamma_{\theta}\tau$ and no other singularities in $\HH$.
Moreover,~\eqref{eq:jderiv} implies that the residue
of $\mathcal{K}_{k}^{\pm}(\tau,z)$ at $z=\tau$ is $(\pi i)^{-1}$. From
the above estimates for $\theta(z)$ and $J(z)$ near the cusp at $z=\pm1$,
we get that for any fixed $\tau$ the function $\mathcal{K}_{k}^{\pm}(\tau,z)$
is rapidly decreasing as $z\to\pm 1$ non-tangentially.

Let us also record some important properties of the coefficients
$g_{n,k}^{\pm}(z)$.
\begin{proposition}\label{prop:gnkproperties}
	The functions $g_{n,k}^{\pm}\colon \HH\to\CC$, $n\ge \nu_{\pm}$ belong to
	$M_{2-k}^!(\Gamma_{\theta},\mp)$, vanish at the cusp at~$\pm1$,
	and satisfy
	\begin{equation}
	\label{eq:qexpproperty}
	g_{n,k}^{\pm}(\tau) = q^{-n/2} + O(q^{-(\nu_{\pm}-1)/2})
	,\qquad \tau\to i \infty.
	\end{equation}
\end{proposition}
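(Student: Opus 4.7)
The plan is to read each of the three properties directly from an explicit closed form for $g_{n,k}^\pm$ obtained by expanding the geometric series inside the kernel. For $|J(\tau)|>|J(z)|$ one has $\frac{J(\tau)}{J(\tau)-J(z)}=\sum_{m\ge 0}(J(z)/J(\tau))^m$, and substituting this into~\eqref{eq:modkernels} and extracting the $q_\tau^{n/2}$-coefficient yields
\[
g_{n,k}^+(z)=\theta^{4-2k}(z)J_-(z)J^{\nu_+}(z)P_{n,k}^+(J(z)),\qquad g_{n,k}^-(z)=\theta^{4-2k}(z)J^{\nu_-}(z)P_{n,k}^-(J(z)),
\]
where $P_{n,k}^\pm$ is a polynomial in $J$ of degree at most $n-\nu_\pm$ whose coefficients are the $q_\tau^{n/2}$-Fourier coefficients of $\theta^{2k}(\tau)J^{-r}(\tau)$ (in the $+$ case) and of $\theta^{2k}(\tau)J_-(\tau)J^{-r}(\tau)$ (in the $-$ case) for $\nu_\pm\le r\le n$. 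Comparing with the structural identification $M_{2-k}^!(\Gamma_\theta,\mp)=\theta^{4-2k}J_\mp\CC[J,J^{-1}]$ recalled earlier immediately places $g_{n,k}^\pm$ in $M_{2-k}^!(\Gamma_\theta,\mp)$.

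Next, for the vanishing at the cusp $\pm 1$ I would feed the local expansions from~\eqref{eq:asymptexp} into the closed form. In the local uniformizer $\tilde q$ at that cusp, $J$ vanishes to first order, $J_-$ acquires a pole of order $1/2$, and $\theta^{4-2k}$ contributes a factor proportional to $(\tilde\tau/i)^{2-k}\tilde q^{(2-k)/4}$; since $P_{n,k}^\pm$ remains bounded as $J\to 0$, the leading exponent of $\tilde q$ in $g_{n,k}^+$ is $(2-k)/4+\nu_+-1/2$ and in $g_{n,k}^-$ it is $(2-k)/4+\nu_-$. A quick case check in $k\bmod 4$ using $\nu_+=\lfloor(k+4)/4\rfloor$ and $\nu_-=\lfloor(k+2)/4\rfloor$ shows both exponents are strictly positive, so $g_{n,k}^\pm$ vanishes at the cusp.

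The refined $q$-expansion at $i\infty$ is the most delicate step, and this is where I expect the main obstacle. The leading term is immediate: since $\theta^{2k}(\tau)J^{-n}(\tau)=q_\tau^{n/2}(1+O(q_\tau^{1/2}))$, the top coefficient of $P_{n,k}^\pm$ equals~$1$, and hence $g_{n,k}^\pm(z)\sim q_z^{-n/2}$ as $z\to i\infty$. To rule out intermediate polar terms, I would write
\[
g_{n,k}^\pm(z)=\tfrac{1}{2}\int_0^2\mathcal{K}_k^\pm(x+iY,z)\,e^{-\pi i n(x+iY)}\,dx
\]
for large $Y$ and deform the contour to a fixed height $\im\tau=Y'$ with $0<Y'<\im z$. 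The main difficulty is the bookkeeping here: one must verify, using the elementary bound $\im(\gamma z)\le\im(z)/|cz+d|^2$ for nonidentity $\gamma\in\Gamma_\theta$, that for $\im z$ sufficiently large the only $\Gamma_\theta$-translate of $z$ entering the strip $(Y',\im z)\times[0,2)$ is $\tau=z$ itself. Granting this, the residue at $\tau=z$ equals $-(\pi i)^{-1}$ by~\eqref{eq:jderiv} and contributes precisely $q_z^{-n/2}$ via the residue theorem, while on the shifted contour the expansion $\frac{J^{\nu_\pm}(z)}{J(\tau)-J(z)}=-J^{\nu_\pm-1}(z)\bigl(1+O(J(\tau)/J(z))\bigr)$ gives $\mathcal{K}_k^\pm(\tau,z)=O(q_z^{-(\nu_\pm-1)/2})$ uniformly in $\tau$, so the remaining integral is $O(q_z^{-(\nu_\pm-1)/2})$ and the asymptotic~\eqref{eq:qexpproperty} follows.
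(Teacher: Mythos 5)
Your argument is correct and is essentially the proof the paper intends: the membership in $M^!_{2-k}(\Gamma_\theta,\mp)$ and the vanishing at the cusp $\pm1$ are exactly what the paper treats as immediate from the definition (your geometric-series closed form in $J$ together with the cusp expansions \eqref{eq:asymptexp}), while your contour-shift/residue argument for \eqref{eq:qexpproperty} is the same argument as in \cite[Thm.~3]{RV} (and in \cite{AKN}, \cite{Z2}), to which the paper's proof defers. Your bookkeeping checks out: once $\im z$ is large, the only point of $\Gamma_\theta z$ above a fixed height $Y'$ (modulo $T^2$) is $z$ itself, the residue of $\mathcal{K}_k^{\pm}(\tau,z)$ in the variable $\tau$ at $\tau=z$ is $-(\pi i)^{-1}$ and produces the main term $q^{-n/2}$, and the integral along the fixed height is $O(|J(z)|^{\nu_\pm-1})=O(|q|^{-(\nu_\pm-1)/2})$, which is the claimed error.
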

\begin{proof}
	The first two claims follow trivially from the definition, while
	the last statement is proved in exactly the same way as Theorem~3
	in~\cite{RV} (see also earlier papers by Asai, Kaneko, and Ninomiya~\cite[Sec.~3]{AKN} and by Zagier~\cite{Z2}
	where analogues of $g_{n,k}^{\pm}$ for the full modular group appear).
\end{proof}

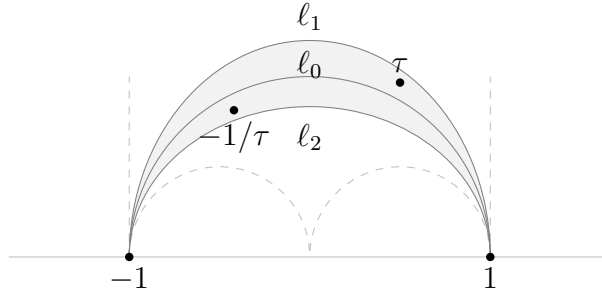
\begin{figure}[h]
	\centering
	\begin{tikzpicture}
	\definecolor{cv0}{rgb}{0.95,0.95,0.95}
	\definecolor{cv1}{rgb}{0.90,0.90,0.90}
	\clip(-9,-0.5) rectangle (7,3.5);
	\begin{scope}[scale=0.8,xshift=-1cm]
	\draw[lightgray] (-5,0) -- (5,0);
	\draw[lightgray,dashed] (-3,0) arc  (180:0:1.5);
	\draw[lightgray,dashed]  (3,0) arc  (0:180:1.5);
	\draw[lightgray,dashed]  (-3,0) -- (-3,3);
	\draw[lightgray,dashed]  (3,0) -- (3,3);
	\fill[color=cv0] (-3,0)  arc (180:0:3 and 3.6) arc (0:180:3 and 2.5);
	\draw[gray] (-3,0) arc (180:0:3 and 3.6);
	\draw[gray] (-3,0) arc (180:0:3 and 2.5);
	\draw[gray] (-3,0) arc  (180:0:3);
	\draw (0,3.6) node[above]{$\ell_1$};
	\draw (0,2.8) node[above]{$\ell_0$};
	\draw (0,2.4) node[below]{$\ell_2$};
	\fill[black] (1.5,2.9) circle (0.07) node[above] {$\tau$};
	\fill[black] (-1.26,2.44) circle (0.07) node[below] {$-1/\tau$};

	\fill[black] (-3,0) circle (0.07) node[below] {$-1$};
	\fill[black] (3,0) circle (0.07) node[below] {$1$};


	\end{scope}
	\end{tikzpicture}
	\label{fig:contour}
	\caption{Deforming the contour of integration}
\end{figure}

\begin{proposition} \label{prop:modkereq}
    If $\phi\colon\HH\to\CC$ is a holomorphic function of moderate
    growth, then
    \begin{equation} \label{eq:contourintegral}
    F_k^{\pm}(\tau,\phi) \coloneqq \frac{1}{2}\int_{-1}^{1}\mathcal{K}_k^{\pm}(\tau,z)\phi(z)dz
    ,\qquad    \tau\in\bF,
    \end{equation}
    where the integral is taken over a semicircle in the upper half-plane,
    admits an analytic continuation to~$\HH$ that satisfies the conditions
    of Theorem~\ref{thm:modint}.
\end{proposition}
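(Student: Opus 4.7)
The plan is to verify the four conditions of Theorem~\ref{thm:modint} for the integral~\eqref{eq:contourintegral}: analyticity on $\HH$, $2$-periodicity, the prescribed Fourier expansion, and the functional equation~\eqref{eq:fksfeq_general}. First, for $\tau\in\bF$ the pole set $\Gamma_\theta\tau$ of $z\mapsto\mathcal{K}_k^\pm(\tau,z)$ is disjoint from the compact contour $\ell_0$ (since $\tau$ sits strictly outside the closed unit half-disk while $-1/\tau\in S\bF$ sits in its interior, and all other orbit points lie still farther away), so $F_k^\pm(\tau,\phi)$ is analytic in $\bF$. Substituting the $q$-expansion of the kernel from~\eqref{eq:modkernels} into the integral and integrating term by term (justified by the uniform convergence on $\ell_0$ for $\im\tau$ large enough that follows from Proposition~\ref{prop:gnkproperties}) produces a Fourier series
\[F_k^\pm(\tau,\phi) = \sum_{n\ge\nu_\pm} a_n\,q^{n/2}, \qquad a_n = \tfrac12\!\int_{\ell_0} g_{n,k}^\pm(z)\phi(z)\,dz,\]
of the required form, from which $2$-periodicity is immediate.

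The heart of the argument is the functional equation. Fix $\tau\in\bF$ and analytically continue $F(\sigma)$ from $\sigma=\tau$ to $\sigma=-1/\tau\in S\bF$ by deforming $\ell_0$ into a contour $\ell_0'$ dictated by the motion of the two pole crossings: the self-pole $z=\sigma$ moves from above $\ell_0$ to below it, and the $S$-dual pole $z=-1/\sigma$ moves from below $\ell_0$ to above it. Hence $\ell_0'$ at $\sigma=-1/\tau$ passes below $-1/\tau$ and above $\tau$, and the closed loop $\ell_0'-\ell_0$ traces a counterclockwise circuit around $-1/\tau$ together with a clockwise circuit around $\tau$. The two relevant residues are the prescribed self-pole residue $(\pi i)^{-1}\phi(-1/\tau)$ at $z=-1/\tau$ and $\eps(\tau/i)^k(\pi i)^{-1}\phi(\tau)$ at $z=\tau$, the latter obtained from the $z$-modularity $\mathcal{K}_k^\pm(\sigma,-1/z)=-\eps(z/i)^{2-k}\mathcal{K}_k^\pm(\sigma,z)$ by the change-of-variable rule. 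Combining with the $\tau$-modular identity $\mathcal{K}_k^\pm(-1/\tau,z)=\eps(\tau/i)^k\mathcal{K}_k^\pm(\tau,z)$, which gives $\tfrac12\int_{\ell_0}\mathcal{K}_k^\pm(-1/\tau,z)\phi(z)\,dz=\eps(\tau/i)^kF(\tau)$, I arrive at
\[F(-1/\tau) = \eps(\tau/i)^kF(\tau) + \phi(-1/\tau) - \eps(\tau/i)^k\phi(\tau),\]
which rearranges precisely to~\eqref{eq:fksfeq_general}.

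Since $\Gamma_\theta=\langle S,T^2\rangle$ and every point of $\HH$ is $\Gamma_\theta$-equivalent to a point of $\overline{\bF}$, the functional equation together with $2$-periodicity propagates $F$ to a well-defined analytic function on all of $\HH$. Moderate growth follows from the bound $|F(\tau)|\ll e^{-\pi\nu_\pm\im\tau}$ as $\tau\to i\infty$ (read off from the Fourier expansion), the integral estimate for $\tau$ in compact subsets of $\bF$, and the explicit transformation factors relating the values on other $\Gamma_\theta$-translates. The main obstacle is the orientation bookkeeping in the contour-deformation step: the two residue loops turn in opposite senses---counterclockwise around $-1/\tau$ and clockwise around $\tau$, determined by the direction in which $\ell_0'$ must be pushed off $\ell_0$ to avoid each pole---and an error in either orientation yields the negative of the intended functional equation.
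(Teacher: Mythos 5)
Your core argument is correct and follows essentially the paper's route: the paper also defines the function on $\bF$ by the integral over the semicircle, continues it across the arc, and derives \eqref{eq:fksfeq_general} from the residue theorem. Where you continue along a path and deform $\ell_0$ past the two moving poles $z=\sigma$ and $z=-1/\sigma$, the paper instead compares with a fixed auxiliary contour $\ell_1$ (and $\ell_2=S\ell_1$) via $F_0-F_1=\phi$; these are the same computation. Your residues $(\pi i)^{-1}\phi(-1/\tau)$ at $z=-1/\tau$ and $\eps(\tau/i)^{k}(\pi i)^{-1}\phi(\tau)$ at $z=\tau$ (via the weight-$(2-k)$ modularity of the kernel in $z$), the use of $\mathcal{K}_k^{\pm}(-1/\tau,z)=\eps(\tau/i)^{k}\mathcal{K}_k^{\pm}(\tau,z)$, and the opposite winding orientations around the two poles are all right, so the sign bookkeeping does produce \eqref{eq:fksfeq_general}. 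The uniqueness/Fourier-expansion and periodicity parts are fine, and the global gluing over $\Gamma_{\theta}=\langle S,T^2\rangle$ (consistent because the only relation is $S^2=1$) is the paper's cocycle construction stated a little more loosely.

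The genuine weak point is moderate growth (and, more mildly, convergence of the defining integral at the endpoints: $\pm1$ are cusps, $\ell_0$ is compact in $\CC$ but not in $\HH$, and one needs the exponential decay of $\mathcal{K}_k^{\pm}(\tau,z)$ in $\im(z)^{-1}$ as $z\to\pm1$ against the moderate growth of $\phi$ there, which you do not address). Your assertion that moderate growth follows from the Fourier expansion at $i\infty$ plus ``the explicit transformation factors relating the values on other $\Gamma_{\theta}$-translates'' is not sufficient as stated: when you propagate from $\ol{\bF}$ to a translate $\gamma^{-1}\ol{\bF}$ you pick up not only an automorphy factor but also the cocycle term $\phi_{\gamma}$, which is a sum of up to $O(\im(\tau)^{-1})$ slashed copies of $\phi$ (one for each inversion $S$ in the word for $\gamma$), and you also need control of $F$ near the cusp $\pm1$ inside $\bF$ itself, which the Fourier expansion at $i\infty$ does not give. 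Bounding this accumulation is precisely the nontrivial step; the paper disposes of it by citing Knopp's theorem on the growth of entire automorphic integrals \cite{K4}, and its quantitative substitute is the analysis of Section~\ref{sec:estimates} (Propositions~\ref{prop:bdfdomain} and~\ref{prop:fksbd}). So this part of your proof is asserted rather than proved; the rest stands.
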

\begin{proof}
We only sketch the proof, since it essentially repeats the proof of
Proposition~2 in~\cite{RV}. \change{The main idea is to show that the contour integral
in~\eqref{eq:contourintegral} extends analytically from~$\bF$ to the neighboring
fundamental domain $S\bF$, that the extension satisfies~\eqref{eq:fksfeq_general} in $\bF\cup
S\bF$, and from there to extend it iteratively to all of~$\HH$ using the functional
equation.}

Note that the integral is well-defined since $\mathcal{K}_{k}^{\pm}(\tau,z)$
has exponential decay in $\im(z)^{-1}$ as $z\to\pm 1$ non-tangentially
and $\phi$ is bounded there by some power of $\im(z)^{-1}$.
Let us denote the right-hand side of~\eqref{eq:contourintegral}
by \change{$G_0(\tau)$}, $\tau\in\bF$.
Since the only singularities of the kernel $z\mapsto
\mathcal{K}_k^{\pm}(\tau,z)$ are at $z\in\Gamma_{\theta}\tau$,
\change{$G_0(\tau)$} extends analytically across the vertical lines
$\tau\in\HH$, $\re \tau=\pm 1$ and the resulting analytic extension
is 2-periodic. Let us show that it also extends across the semicircle
and the extension satisfies the functional equation~\eqref{eq:fksfeq_general}.

Let $\ell_0$ denote the semicircle, and consider two other paths $\ell_1$
and $\ell_2$ as in Figure~\ref{fig:contour} such that $\ell_2$ is
the image of $\ell_1$ under the inversion $z\mapsto -1/z$,
with all three paths oriented from~$-1$ to~$1$. Let us define
\change{$G_{1}(\tau)\coloneqq
\frac{1}{2}\int_{\ell_1}\mathcal{K}_k^{\pm}(\tau,z)\phi(z)dz$}.
Note that \change{$G_1$} defines an analytic function in the
region~$\mathcal{U}$ (the shaded region in Figure~\ref{fig:contour})
between $\ell_1$ and~$\ell_2$.
Let~$\tau$ be a point in the region between $\ell_0$ and $\ell_1$.
Then the residue theorem tells us that \change{
	\[G_0(\tau) - G_1(\tau) = \phi(\tau),\]
}so that \change{$G_1(\tau)+\phi(\tau)$} provides an analytic extension of \change{$G_0$}
to $\mathcal{U}$. Moreover, we automatically get~\eqref{eq:fksfeq_general}
since \change{$G_1(\tau)=\pm (\tau/i)^{-k}G_1(-1/\tau)$} for $\tau\in\mathcal{U}$
because of the corresponding property of $\mathcal{K}_{k}^{\pm}(\tau,z)$.

To obtain an analytic extension to all of $\HH$ we simply define $F(\tau)$
for $\tau\in \gamma^{-1}\ol{\bF}$ as \change{$G_0|_{k}^{\pm}\gamma+\phi_{\gamma}$},
where $\{\phi_\gamma\}_{\gamma\in\Gamma_{\theta}}$ is the $\Gamma_{\theta}$-cocycle generated by $\phi_{T^2}=0$ and
$\phi_S=\phi-\phi|_{k}^{\pm}S$ (see Section~\ref{sec:cocycleestimates}).
Since the neighboring regions of $\gamma^{-1}\ol{\bF}$
are $\gamma^{-1}S\ol{\bF}$ and $\gamma^{-1}T^{\pm2}\ol{\bF}$,
the above continuation properties of $F_0$ imply that $F$ is well-defined and
analytic on~$\HH$.

Finally, since $\phi$ is of moderate growth and $F(\tau)$ is an
automorphic integral for the cocycle generated by~$\phi$,
by the main result of~\cite{K4} we get that
$F_{k}^{\pm}(\tau,\phi)\coloneqq F(\tau)$ is also of moderate growth. \end{proof}

We will prove more precise statements about the
growth of~$F_{k}^{\pm}(\tau,\phi)$ in Section~\ref{sec:estimates}.

\begin{remark} \label{rem:contourintegral}
Let us give a brief explanation of why one would expect a formula
like~\eqref{eq:contourintegral}. Assume that $k=0$ and the sign is ``+''.
Then the function that we are looking for, when written in
terms of $w=J(\tau)$, is a holomorphic function on $\overline{\CC}\sm[0,64]$
with prescribed jumps along the segment $[0,64]$. By the
classical Sokhotski-Plemelj formula, such a function
is given by an integral $\int_{0}^{64}\frac{A(s)}{w-s}ds$,
where $A(s)$ is the jump at the point $s$. When
expressed in terms of the upper half-plane variables~$\tau$ and~$z$,
the Cauchy kernel simply becomes $\mathcal{K}_0^{+}(\tau,z)$ and
we obtain~\eqref{eq:contourintegral}.
To get the formula in the general case we simply divide both sides of the
functional equation~\eqref{eq:fksfeq_general}
by $\theta^{2k}(\tau)J_{\eps}(\tau)J^{n}(\tau)$ for an appropriate
value $n\in\ZZ$ to reduce to the case $k=0$, $\eps=+$.
Let us also mention that, in the case of $\PSL_2(\ZZ)$, such integrals
have previously appeared in a work of Duke, Imamo\={g}lu,
and T\'{o}th~\cite{DIT}.
\end{remark}

\begin{remark}
	The functions $\mathcal{K}_{k}^{\pm}(\tau,z)$ are sometimes called
	Green's functions, see, e.g., Eichler's paper~\cite[p.~121]{E}.
	For $k>2$ one can instead use the Poincar\'e series
	\[
	P_k^{\pm}(\tau,z)
	\coloneqq \frac{1}{2}\sum_{\gamma\in\Gamma_{\theta,\infty}
		\backslash\Gamma_{\theta}}
	\chi_{\pm}(\gamma)j_{\theta}^{-2k}(\gamma,\tau)
	\frac{e^{\pi i \gamma\tau}+e^{\pi i z}}{e^{\pi i\gamma\tau}-e^{\pi i z}},
	\]
	\change{(where $\Gamma_{\theta,\infty}$ denotes the subgroup of $\Gamma_{\theta}$
	generated by $T^2$)} which differs from $\mathcal{K}_{k}^{\pm}(\tau,z)$ by an element
	of $M_k^!(\Gamma_{\theta},\pm)\otimes M_{2-k}^!(\Gamma_{\theta},\mp)$.
\end{remark}

\subsection{Definition and basic properties of $F_{k}^{\pm}(\tau,s)$}
Using the result of Proposition~\ref{prop:modkereq} we can now precisely
define the special functions~$F_{k}^{\pm}$.

For $k\ge 0$ we define $F_{k}^{\pm}\colon \HH\times\CC\to\CC$ by
    \begin{equation} \label{eq:fksintegral}
    F_k^{\pm}(\tau,s) \coloneqq \frac{1}{2}\int_{-1}^{1}\mathcal{K}_k^{\pm}(\tau,z)(z/i)^{-s}dz,
    \qquad \tau\in\bF
    \end{equation}
and by analytic continuation in $\tau$ if $\tau$ is in $\HH\sm\bF$.
The function $F_k^{\pm}(\cdot,s)$ is 2-periodic and has a Fourier expansion
    \begin{equation} \label{eq:fksfourier}
    F_k^{\pm}(\tau,s) = \sum_{n=\nu_{\pm}}^{\infty}\alpha_{n,k}^{\pm}(s) e^{\pi i n\tau} ,
    \end{equation}
where $\alpha_{n,k}^{\pm}(s)$ are given by
    \begin{equation} \label{eq:alphankdef}
    \alpha_{n,k}^{\pm}(s) := \frac{1}{2}\int_{-1}^{1}g_{n,k}^{\pm}(z)(z/i)^{-s}dz,
    \end{equation}
and $g_{n,k}^{\pm}$ are weakly holomorphic modular forms of weight $2-k$ defined by~\eqref{eq:modkernels}. The coefficients $\alpha_{n,k}^{\pm}(s)$
are of polynomial growth in~$n$ for any fixed $s\in\CC$ and
    \begin{equation} \label{eq:fksfeq}
    F_k^{\pm}(\tau,s) \mp (\tau/i)^{-k}F_k^{\pm}(-1/\tau,s)
    = (\tau/i)^{-s} \mp (\tau/i)^{s-k}, \qquad \tau\in\HH.
    \end{equation}
Finally, for any fixed $\tau\in\HH$ the function $F_{k}^{\pm}(\tau,s)$
is an entire function of~$s$ and it satisfies
    \begin{equation} \label{eq:fkssym}
    F_k^{\pm}(\tau,k-s) = \mp F_k^{\pm}(\tau,s).
    \end{equation}
The last claim follows from the uniqueness part of Theorem~\ref{thm:modint}
since $F_{k}^{\pm}(\tau,k-s)$ is 2-periodic in~$\tau$ and
satisfies the same functional equation as $F_{k}^{\pm}(\tau,s)$, up to sign.

Finally, we remark that $\alpha_{n,k}^{\pm}(s)$ is an entire
function of exponential type. More precisely, by
making a change of variable \change{$z=ie^{2\pi it}$} in~\eqref{eq:alphankdef} we obtain
	\begin{equation} \label{eq:alphapw}
	\alpha_{n,k}^{\pm}(s) = -\pi\int_{-1/4}^{1/4}g_{n,k}^{\pm}(ie^{2\pi i t})e^{-2\pi i t(s-1)}dt,
	\end{equation}
so that $\alpha_{n,k}^{\pm}(s)$ is the Fourier transform
of a $C^{\infty}$-smooth function with support in $[-1/4,1/4]$.
An analogous calculation also shows that for $\tau\in\bF$ the
function $s\mapsto F_{k}^{\pm}(\tau,s)$ is also the Fourier transform of a
smooth function with support in $[-1/4,1/4]$. Similarly,
we get the following result.
\begin{proposition} \label{prop:rapiddecay}
	Let $k\ge0$. Then there exists $c>0$ such that for all $x>1$ we have
	\[|F_k^{\pm}(ix,s)-\alpha_{0,k}^{\pm}(s)|
	\ll_k e^{\frac{\pi}{2}|\im s|}e^{-\pi x-c\sqrt{|\im s|}} .\]
\end{proposition}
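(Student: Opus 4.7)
The plan is to start from the Fourier expansion~\eqref{eq:fksfourier} and reduce the claim to an estimate on its coefficients. For $\tau=ix$ that expansion reads
\[
F_{k}^{\pm}(ix,s)-\alpha_{0,k}^{\pm}(s)
=\sum_{n\ge m}\alpha_{n,k}^{\pm}(s)\,e^{-\pi n x},\qquad m\coloneqq\max(1,\nu_{\pm}),
\]
with the convention $\alpha_{0,k}^{\pm}(s)\coloneqq 0$ whenever $\nu_{\pm}\ge 1$. For $x>1$ the geometric sum in~$n$ is controlled by its first term, so it is enough to establish a uniform pointwise estimate
\[
|\alpha_{n,k}^{\pm}(s)|\ll_{k} n^{A}\,e^{\frac{\pi}{2}|\im s|}\,e^{-c\sqrt{|\im s|}}\qquad(n\ge 1)
\]
for some absolute constants $A$ and $c>0$, and then sum $\sum_{n\ge m}n^{A}e^{-\pi n x}\ll e^{-\pi x}$.

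To obtain this pointwise bound I would work with the integral representation~\eqref{eq:alphapw}. By symmetry, assume $T\coloneqq\im s>0$. The modulus of the integrand is $|g_{n,k}^{\pm}(ie^{2\pi i t})|\,e^{2\pi tT}$, whose purely exponential factor is maximized at $t=1/4$ with value $e^{\pi T/2}$; this alone yields the Paley--Wiener bound. The extra $e^{-c\sqrt{T}}$ saving must therefore come from the vanishing of $g_{n,k}^{\pm}$ at the cusp at~$\pm 1$ given by Proposition~\ref{prop:gnkproperties}. Conjugating by a M\"obius transformation $\gamma\in\PSL_{2}(\ZZ)$ that sends this cusp to~$i\infty$ and expanding in the resulting local cusp parameter yields an estimate of the form
\[
|g_{n,k}^{\pm}(ie^{2\pi i t})|\ll n^{A}\,e^{-c_{0}/(1/4-|t|)}\qquad\text{as }t\to\pm 1/4,
\]
where $c_{0}>0$ is determined by the width of the cusp and is in particular independent of~$n$. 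A standard Laplace estimate of $\int_{-1/4}^{1/4}e^{2\pi tT-c_{0}/(1/4-t)}dt$ then locates the maximum at $1/4-t_{\ast}\sim\sqrt{c_{0}/(2\pi T)}$ and gives the combined exponent $\pi T/2-2\sqrt{2\pi c_{0}T}$, with an analogous contribution near $t=-1/4$. This delivers the required factor.

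The main obstacle is ensuring that the cusp-decay constant~$c_{0}$ is uniform in~$n$ and the pre-factor grows only polynomially, since otherwise the sum over~$n$ would either diverge or destroy the $\sqrt{|\im s|}$ improvement. This uniformity is obtained from the explicit representation $g_{n,k}^{\pm}=\theta^{4-2k}J_{\mp}Q_{n}(J)$ with $Q_{n}$ a Laurent polynomial, combined with the asymptotic expansions in~\eqref{eq:asymptexp}; if sharper control is needed, one can alternatively invoke the quantitative coefficient estimates developed in Section~\ref{sec:estimates}. With uniformity in place, the Laplace estimate and the summation over~$n$ are routine.
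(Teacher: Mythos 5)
Your reduction to coefficient bounds and the final summation over $n$ are harmless, but the estimate the whole argument hinges on, namely $|g_{n,k}^{\pm}(ie^{2\pi i t})|\ll n^{A}e^{-c_{0}/(1/4-|t|)}$ uniformly on the arc, is false: the polynomial prefactor cannot hold away from the cusps. On the interior of the arc the functions $g_{n,k}^{\pm}$ are exponentially large in $n$. For instance at $z=i$: the generating series $\sum_{n}g_{n,k}^{\pm}(i)\,q^{n/2}=\mathcal{K}_{k}^{\pm}(\tau,i)$ is meromorphic in $\tau$ with a genuine simple pole at $\tau=i$ (residue $(\pi i)^{-1}$), so as a power series in $e^{\pi i\tau}$ its radius of convergence is exactly $e^{-\pi}$, forcing $\limsup_{n}|g_{n,k}^{\pm}(i)|^{1/n}=e^{\pi}$; more generally $g_{n,k}^{\pm}(z)$ is of size roughly $e^{\pi n\im z}$ on the open arc, consistent with the leading term $q^{-n/2}$ of its expansion. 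The representation $g_{n,k}^{\pm}=\theta^{4-2k}J_{\mp}Q_{n}(J)$ does not rescue this: $Q_{n}$ has degree $\asymp n$ and its supremum on $J\in[0,64]$ grows exponentially. Nor do the estimates of Section~\ref{sec:estimates} supply what you need — they bound $\alpha_{n,k}^{\pm}(s)$ and $F_{k}^{\pm}$ (by integrating the generating function along horizontal segments at height $\asymp 1/n$, via the cocycle estimates), not $g_{n,k}^{\pm}$ pointwise on the arc; the polynomial size of $\alpha_{n,k}^{\pm}(s)$ in~\eqref{eq:alphankdef} comes from oscillation and cancellation, which your absolute-value bound on the integrand throws away. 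And if you keep the true exponential-in-$n$ bound instead, the sum $\sum_{n}e^{\pi n\im z}e^{-\pi n x}$ degenerates like $(x-1)^{-1}$ as $x\downarrow 1$ near $z=i$, so you do not recover the stated estimate uniformly for all $x>1$.

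The paper's proof avoids the issue precisely by never splitting into coefficients: for $x\ge 2$ it bounds the full kernel minus its constant term, $|\mathcal{K}_{k}^{\pm}(ix,ie^{it})-g_{0,k}^{\pm}(ie^{it})|\ll_k\exp(-\pi x-\kappa_{\pm}/\cos t)$, which is legitimate because $J(ix)$ stays far from the segment $[0,64]$ traced by $J(ie^{it})$, so the factor $e^{-\pi x}$ captures the entire tail at once and only the ($n$-independent) cusp decay enters; then the Laplace-type estimate in $t$ (your $\pi T/2-c\sqrt{T}$ computation, done there via $1/\sin t\ge 1/t$ and $at+b/t\ge2\sqrt{ab}$) finishes the range $x\ge2$, while $1<x<2$ is handled by a separate contour deformation away from the pole of the kernel. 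So the Laplace step of your proposal matches the paper, but the termwise pointwise bound it rests on is a genuine gap, not a technicality.
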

\begin{proof}
	Making the change of variable $z=e^{it}$ in the definition we get
	\[F_k^{\pm}(ix, s)-\alpha_{0,k}^{\pm}(s)
	= \frac{1}{2}\int_{-\pi/2}^{0}
	(\mathcal{K}_k^{\pm}(ix,ie^{it})-g_{0,k}^{\pm}(ie^{it}))
	(e^{-ist}\mp e^{i(s-k)t})dt.\]
	If $x\ge 2$, then using the leading terms of the asymptotic
	expansions~\eqref{eq:asymptexp} and the fact that $J(ix)>100$ for $x\ge 2$,
	we get
	\[|\mathcal{K}_k^{\pm}(ix,ie^{it})-g_{0,k}^{\pm}(ie^{it})|
	\ll_k \exp(-\pi x-\tfrac{\kappa_{\pm}}{\cos t}),\qquad t\in (-\pi/2,\pi/2),\]
	where $\kappa_{+}:=2\pi(1-\{k/4\})$ and $\kappa_{-}:=2\pi(1-\{(k-2)/4\})$.
	Thus we have
	\begin{align*}
	|F_k^{\pm}(ix, s)-\alpha_{0,k}^{\pm}(s)|
	&\ll_k e^{-\pi x}
	\int_{0}^{\pi/2}e^{|\im s|t-\frac{\kappa_{\pm}}{\cos t}} dt\\
	&=e^{\frac{\pi}{2}|\im s|}e^{-\pi x}
	\int_{0}^{\pi/2}e^{-|\im s|t-\frac{\kappa_{\pm}}{\sin t}}dt
	\le \change{\tfrac{\pi}{2}}e^{\frac{\pi}{2}|\im s|}e^{-\pi x}e^{-\change{2}\sqrt{\kappa_{\pm}|\im s|}},
	\end{align*}
	where we have used the inequalities $\frac{1}{\sin t} \ge \frac{1}{t}$ and
	$at+bt^{-1}\ge 2\sqrt{ab}$. Since $\kappa_{\pm}>0$, this proves
	the claim.

	If $1<x<2$, then we split the integral as $\int_{-\pi/2}^{-\pi/4}+\int_{-\pi/4}^{0}$. Then we use the same
	estimate for the first integral, while the second integral is of
	size $\ll_k e^{\frac{\pi}{4}|\im s|}$, which can be seen by deforming the
	contour of integration (similarly to what was done in the proof of
	Proposition~\ref{prop:modkereq}) so as to avoid large values of the
	denominator $J(\tau)-J(z)$ in $\mathcal{K}_{k}^{\pm}(\tau,z)$.
\end{proof}

\subsubsection{Relation to the interpolation bases for the $\sqrt{n}$ case}
Next, let us relate~$\alpha_{n,k}^{\pm}(s)$ to the functions
$b_{n}^{\pm}$ and $d_{n}^{\pm}$ constructed in~\cite{RV}. If we define
    \begin{equation} \label{eq:sqrtndef}
    \begin{split}
    b_{n}^{\pm}(x) &\,\change{:=}\, \frac{1}{2}\int_{-1}^{1}g_{n,1/2}^{\pm}(z)e^{\pi i z x^2}dz,\\
    d_{n}^{\pm}(x) &\,\change{:=}\, \frac{1}{2}\int_{-1}^{1}g_{n,3/2}^{\pm}(z)xe^{\pi i z x^2}dz,
    \end{split}
    \end{equation}
(the sign notation differs from that of \cite{RV} so that $b_n^{\pm}$
and $d_n^{\pm}$ in our context coincide with respectively $b_n^{\mp}$ and $d_n^{\mp}$ from~\cite{RV})
then a routine calculation shows that
    \begin{equation} \label{eq:reltosqrtn}
    \begin{split}
    \Gamma_{\RR}(s)\alpha_{n,1/2}^{\pm}(s/2) = 2\int_{0}^{\infty}b_{n}^{\pm}(x)x^{s-1}dx,\\
    \Gamma_{\RR}(s)\alpha_{n,3/2}^{\pm}(s/2) = 2\int_{0}^{\infty}d_{n}^{\pm}(x)x^{s-2}dx,
    \end{split}
	\end{equation}
where we again use the notation $\Gamma_{\RR}(s):=\pi^{-s/2}\Gamma(s/2)$.
We remark here that in~\cite[Prop.~1, Prop.~3]{RV} it is proved
that~$b_{n}^{\pm}(x)$ is an even Fourier eigenfunction with eigenvalue $\mp1$,
$d_{n}^{\pm}(x)$ is an odd Fourier eigenfunction with eigenvalue $\pm i$,
and moreover that
	\begin{equation} \label{eq:sqrtninterpolation}
	b_{n}^{\pm}(\sqrt{m})=d_{n}^{\pm}(\sqrt{m})=\delta_{n,m}
	, \qquad m\ge 1.
	\end{equation}
All these properties can be easily checked directly from the definition,
using~\eqref{eq:qexpproperty}.

\subsubsection{Special values}
\label{sec:fkspval}
We conclude this section by giving explicit evaluations
of $F_{k}^{\pm}(\tau,s)$ for some special values of~$s$. We do this
using the fact that~\eqref{eq:fksfourier} and~\eqref{eq:fksfeq} uniquely
determine $F_{k}^{\pm}(\tau,s)$ as a function of $\tau$, so that
if we can find a 2-periodic function $f(\tau)$ that
satisfies~~\eqref{eq:fksfeq}, then necessarily
$F_{k}^{\pm}(\tau,s)-f(\tau)$ belongs to $M_k(\Gamma_{\theta},\pm)$.

A trivial example is $s=0$, where we can take $f(\tau)=1$.
Thus $F_{k}^{\pm}(\tau,0)=1-g(\tau)$, where $g(\tau)=0$ if $\nu_{\pm}=0$
and otherwise $g(\tau)$ is the unique modular form
in $M_{k}(\Gamma_{\theta},\pm)$ with the $q$-expansion
$g(\tau) = 1+O(q^{\nu_{\pm}/2})$. In particular,
	\begin{equation}\label{eq:spvalue0}
	F_{k}^{-}(\tau,0)=1,\quad\qquad
	F_{k}^{+}(\tau,0)=1-\theta^{2k}(\tau),\quad\qquad
	0\le k<2.
	\end{equation}

Similarly, from~\eqref{eq:fksfeq} we see that
$F_{k}^{+}(\tau,k/2)$ is in $M_{k}(\Gamma_{\theta},+)$ and looking at the
$q$-expansion, we see that in fact
	\[F_{k}^{+}(\tau,k/2)=0,\quad\qquad 0\le k<2.\]

A more interesting example is the identity
	\[F_{2}^{-}(\tau,1) =
	\frac{\pi}{3}(-E_2(\tau/2)+5E_2(\tau)-4E_2(2\tau)),\]
where $E_2$ is the weight~2 Eisenstein series,
$E_2(\tau)=1-24\sum_{n\ge1}\sigma(n)q^n$ \change{(here $\sigma(n)=\sum_{d|n}d$ is the
divisor sum function)}. To see this we use that by the well-known functional equation
	\[E_2(\tau)-\tau^{-2}E_2(-1/\tau) = \frac{6}{\pi}(\tau/i)^{-1}\]
we have $F_{2}^{-}(\tau,1)-\frac{\pi}{3}E_2(\tau)\in M_2(\Gamma_{\theta},-)$
and note that the space $M_2(\Gamma_{\theta},-)$ is one-dimensional,
spanned by $E_2(\tau/2)-4E_2(\tau)+4E_2(2\tau)$. As a corollary, we have
	\[\alpha_{n,2}^{-}(1) = 8\pi(\sigma(n)-5\sigma(n/2)+4\sigma(n/4)),\qquad n\ge 1,\]
where we define $\sigma(x)=0$ if $x\not\in\NN$.

\section{The Dirichlet series kernel associated with zeros of $\zeta(s)$}
\label{sec:zetakernels}
In this section we assume that the weight $k$ is a positive
real number and consider the function $F_{k}^{\pm}(\tau,s)$ given by
the Fourier expansion
	\[F_{k}^{\pm}(\tau,s) := \sum_{n\ge\nu_{\pm}}
	\alpha_{n,k}^{\pm}(s)e^{\pi i n\tau},\]
where, as before, $\nu_{-}=\lfloor(k+2)/4\rfloor$
and $\nu_{+}=\lfloor (k+4)/4\rfloor$.
For convenience we extend the definition of $\alpha_{n,k}^{\pm}(s)$
to all $n\ge0$ by setting $\alpha_{n,k}^{\pm}(s)\,\change{:=}\,0$, $0\le n < \nu_{\pm}$.

\subsection{The Mellin transform of $F_{k}^{\pm}(\tau,s)$}
\label{sec:akernel}
Let us define $\Aa_k^{\pm}(w,s)$ by
	\begin{equation} \label{eq:akdef}
	\Aa_k^{\pm}(w,s) \coloneqq
	\int_{0}^{\infty}(F_{k}^{\pm}(it,s)-\alpha_{0,k}^{\pm}(s))t^{w-1}dt
	= \pi^{-w}\Gamma(w)\sum_{n\ge1}\frac{\alpha_{n,k}^{\pm}(s)}{n^w}.
	\end{equation}
Since for fixed $s$ the sequence $\{\alpha_{n,k}^{\pm}(s)\}$ grows
polynomially, the above Dirichlet series converges absolutely for
sufficiently large~$\re w$. Similarly, for fixed $s$ we have
(by~\eqref{eq:fksfeq})
	\begin{align*}\label{eq:fkasymp}
	F_k^{\pm}(it,s) &= \alpha_{0,k}^{\pm}(s) + O(e^{-\pi t}),\qquad &t\to\infty ,\\
	F_k^{\pm}(it,s) &=  \pm \alpha_{0,k}^{\pm}(s)t^{-k}
	+t^{-s}\mp t^{s-k}+ O(t^{-k}e^{-\pi/t}),\qquad
	&t\to0+ ,
	\end{align*}
and hence the integral in~\eqref{eq:akdef} converges absolutely
for $\re w > \max(k,\re s,\re(k-s))$.

\begin{proposition}
	The function $w\mapsto \Aa_k^{\pm}(w,s)$ extends to a meromorphic
	function in $\CC$ with simple poles at $w=s$, $k-s$ with \change{respective}
	residues~$1$, $\mp 1$, and at most simple poles at $w=0$, $k$ with \change{respective}
	residues $-\alpha_{0,k}^{\pm}(s)$, $\pm\alpha_{0,k}^{\pm}(s)$. Moreover,
	the function $\Aa_k^{\pm}$ satisfies the two functional equations
	\begin{equation} \label{eq:dirkerfeq}
	\begin{split}
	\Aa_k^{\pm}(k-w,s) = \pm\Aa_k^{\pm}(w,s), \\
	\Aa_k^{\pm}(w,k-s) = \mp\Aa_k^{\pm}(w,s).
	\end{split}
	\end{equation}
	Finally, the function $w\mapsto \Aa_k^{\pm}(w,s)$ is bounded
	in lacunary vertical strips $\{u+iv\mid a\le u\le b, |v|\ge T\}$ for
	sufficiently large $T>0$.
\end{proposition}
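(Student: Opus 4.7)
The plan is the classical Hecke--Riemann splitting of the defining Mellin integral at $t=1$, combined with the $\tau\mapsto -1/\tau$ functional equation \eqref{eq:fksfeq} used to convert the $(0,1]$-piece into another integral over $[1,\infty)$ plus four explicit rational terms in $w$.

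Write $G(t)\coloneqq F_k^{\pm}(it,s)-\alpha_{0,k}^{\pm}(s)$ and split
\[
\Aa_k^{\pm}(w,s)
= \int_1^{\infty} G(t)\,t^{w-1}\,dt + \int_0^1 G(t)\,t^{w-1}\,dt.
\]
The first piece is already entire in $w$ because $G(t)=O(e^{-\pi t})$ as $t\to\infty$. For the second, I would substitute the functional equation at $\tau=it$,
\[
F_k^{\pm}(it,s) = \pm t^{-k}F_k^{\pm}(i/t,s) + t^{-s}\mp t^{s-k},
\]
and change variables $u=1/t$ in $\pm\int_0^1 t^{w-k-1}F_k^{\pm}(i/t,s)\,dt$ to push it onto $[1,\infty)$. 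The pure powers of $t$ and the constant $\alpha_{0,k}^{\pm}(s)$ that was subtracted contribute explicit simple rational terms. Collecting everything I expect to obtain
\[
\Aa_k^{\pm}(w,s)
= \int_1^{\infty} G(t)\,t^{w-1}\,dt
\pm \int_1^{\infty} G(t)\,t^{k-w-1}\,dt
+ \frac{1}{w-s} + \frac{\mp 1}{w-(k-s)}
- \frac{\alpha_{0,k}^{\pm}(s)}{w}
+ \frac{\pm \alpha_{0,k}^{\pm}(s)}{w-k},
\]
valid initially in the region of absolute convergence but, the right-hand side being manifestly meromorphic on $\CC$, providing the required analytic continuation and immediately identifying the four poles with their residues.

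The first functional equation $\Aa_k^{\pm}(k-w,s)=\pm\Aa_k^{\pm}(w,s)$ then follows by direct inspection of this representation: under $w\mapsto k-w$ the two integral terms swap into each other up to the common sign $\pm$, while the four rational terms are permuted correctly among themselves. The second functional equation $\Aa_k^{\pm}(w,k-s)=\mp\Aa_k^{\pm}(w,s)$ is simpler still: it is an immediate consequence of \eqref{eq:fkssym} applied inside the defining Mellin integral.

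For the boundedness in lacunary vertical strips $\{a\le \re w\le b,\ |\im w|\ge T\}$, both integrals on the right are dominated in absolute value by $\int_1^{\infty}e^{-\pi t}t^{\max(b,k-a)-1}\,dt<\infty$, uniformly in~$w$, and the four rational terms, whose poles all lie on the real $w$-axis, are uniformly bounded once~$T$ is large enough to separate $\im w$ from these poles. The only real care required throughout is the $\pm/\mp$ sign bookkeeping in the polar terms; I do not foresee any genuine obstacle, and I would not invoke the sharper Proposition~\ref{prop:rapiddecay} here, as the naive exponential decay of $G(t)$ already suffices.
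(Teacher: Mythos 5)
Your proposal is correct and is essentially the paper's own argument: the paper derives exactly the same symmetrized representation \eqref{eq:akintsym} by splitting the Mellin integral at $t=1$ and applying the modular relation \eqref{eq:fksfeq}, then reads off the meromorphic continuation, the four polar terms with the stated residues, the $w\mapsto k-w$ and $s\mapsto k-s$ functional equations, and the boundedness in lacunary vertical strips from that formula. The only small inaccuracy is your remark that the poles of the rational terms lie on the real $w$-axis, which fails when $\im s\neq 0$; this is harmless, since the lacunary-strip bound only requires $T>|\im s|$, consistent with ``sufficiently large $T$'' in the statement.
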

\begin{proof}
	The claims follow from the general result of Bochner~\cite[Th.~4]{B}
	combined with~\eqref{eq:fksfeq}.

	More specifically, using the standard trick (see, e.g.,~\cite{Z})
	by splitting the integral defining $\Aa_{k}^{\pm}(w,s)$
	as $\int_{0}^{1}+\int_{1}^{\infty}$ and applying~\eqref{eq:fkssym}
	to the part $\int_{0}^{1}$ we obtain
	\begin{equation} \label{eq:akintsym}
	\begin{split}
	\Aa_k^{\pm}(w,s) =
	-\alpha_{0,k}^{\pm}(s)(w^{-1}\pm (k-w)^{-1})+(w-s)^{-1}\pm (k-w-s)^{-1}\\
	+\int_{1}^{\infty}(F_k^{\pm}(it,s)-\alpha_{0,k}^{\pm}(s))(t^{w-1}\pm t^{k-w-1})dt.
	\end{split}
	\end{equation}
	Since the integral defines an analytic function of $(w,s)$,
	this immediately implies meromorphic continuation with
	given simple poles, and since the integral is clearly bounded
	in vertical strips, we also obtain boundedness in lacunary strips
	for~$w\mapsto\Aa_k^{\pm}(w,s)$.
	Finally, the functional equations~\eqref{eq:dirkerfeq} follow trivially
	from~\eqref{eq:akintsym} and~\eqref{eq:fkssym}.
\end{proof}

Note that $\alpha_{0,k}^{+}(s)=0$ for $k\ge 0$ and $\alpha_{0,k}^{-}(s)=0$ for
$k\ge 2$ hence in these cases (which correspond to $\nu_{\pm}>0$)
the only singularities of $w\mapsto \Aa_k^{\pm}(w,s)$ are the
simple poles at $s$ and $k-s$.

\begin{remark}
Bochner's Converse Theorem~\cite[Th.~7.1]{BK},~\cite[Th.~4]{B}
implies that the function $w\mapsto \Aa_k^{\pm}(w,s)$ is
essentially uniquely defined by the first equation in~\eqref{eq:dirkerfeq}
and its poles. Let us make this
precise in the case when $\nu_{\pm}>0$:
assume that $\psi(w)=\sum_{n\ge\nu_{\pm}}a_nn^{-w}$
is convergent in some right half-plane and extends to a meromorphic function
in~$\CC$ such that $(w-s)(w-k+s)\psi(w)$ is entire of finite order
and $\Psi(w)=\pi^{-w}\Gamma(w)\psi(w)$ satisfies
$\Psi(k-w)=\pm\Psi(w)$. Then $\Psi(w)$ is a multiple of $\Aa_k^{\pm}(w,s)$.
Thus we see that these functions are in some sense universal:
if $\psi(w)$ is any Dirichlet series such that  $\psi(w)P(w)$ is entire of
finite order for some polynomial~$P$ and such that $\Psi(k-w)=\pm \Psi(w)$, then
	\[\Psi(w) = L_f(w) +
	\sum_{j}c_j\frac{\partial^{m_j}}{\partial s^{m_j}}\Aa_k^{\pm}(w,s_j)\]
for some $c_j,s_j\in\CC$, where $f\in M_k(\Gamma_{\theta},\pm)$
and $L_f(w)=\int_{0}^{\infty}(f(it)-a_0(f))t^{w-1}dt$. This gives a
considerable strengthening of the abundance principle of Knopp~\cite{K2}.
\end{remark}

Using the estimates from Section~\ref{sec:estimates} we get
quite precise information about the behavior
of $w\mapsto \Aa_k^{\pm}(w,s)$ in vertical strips.
\begin{lemma} \label{lem:Aest}
	Suppose that $k/2 \le \re s < k<2$ and let $\kappa=\max(k,1)$ .
	Then
	\begin{equation*}
	|\Aa_k^{\pm}(u+iv,s)| \le
	C(s) \pi^{-u}|\Gamma(u+iv)|
	(1+|v|)^{\kappa+\eps-u}, \qquad k-\kappa-\eps \le u\le \kappa+\eps, \\
	\end{equation*}
	for every $\eps>0$ and all sufficiently big $|v|$,
	where $C(s)>0$ depends only on~$s$.
\end{lemma}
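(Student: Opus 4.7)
The plan is to derive the bound via a standard Phragm\'en--Lindel\"of convexity argument applied in the strip $k-\kappa-\eps \le \re w \le \kappa+\eps$, combining absolute convergence of the defining Dirichlet series on the right boundary with the functional equation $\Aa_k^{\pm}(k-w,s) = \pm\Aa_k^{\pm}(w,s)$ to control the left boundary.

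First I would bound $\Aa_k^{\pm}$ on the right edge $\re w = \kappa+\eps$. Since the coefficients $\alpha_{n,k}^{\pm}(s)$ are of polynomial growth in $n$ (with the sharp estimates to be established in Section~\ref{sec:estimates}), the Dirichlet series $L(w)\coloneqq \sum_{n\ge\nu_{\pm}}\alpha_{n,k}^{\pm}(s)n^{-w}$ converges absolutely on this line; note that $\kappa+\eps > k = \max(k,\re s,\re(k-s))$ under the hypothesis $k/2 \le \re s < k$, so convergence is guaranteed. This yields $|L(\kappa+\eps+iv)| \le C_1(s)$ and hence
\[|\Aa_k^{\pm}(\kappa+\eps+iv,s)| \le C_1(s)\pi^{-(\kappa+\eps)}|\Gamma(\kappa+\eps+iv)|,\]
which matches the claimed bound at $u=\kappa+\eps$ (where $(1+|v|)^{\kappa+\eps-u}=1$). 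The functional equation then gives $|\Aa_k^{\pm}(k-\kappa-\eps+iv,s)| \le C_1(s)\pi^{-(\kappa+\eps)}|\Gamma(\kappa+\eps-iv)|$ on the left edge. Invoking Stirling in the uniform form $|\Gamma(\sigma+it)| \asymp |t|^{\sigma-1/2}e^{-\pi|t|/2}$ one gets $|\Gamma(\kappa+\eps-iv)| \asymp |\Gamma(k-\kappa-\eps+iv)|\,(1+|v|)^{2\kappa+2\eps-k}$ for large $|v|$, and the exponent $2\kappa+2\eps-k$ is exactly $\kappa+\eps-u$ at $u=k-\kappa-\eps$, so the desired inequality is again matched on this boundary.

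Next I would interpolate via Phragm\'en--Lindel\"of. Consider
\[\tilde g(w) \coloneqq \frac{\pi^{w}(w-s)(w-k+s)(w-k)}{\Gamma(w)}\,\Aa_k^{\pm}(w,s),\]
which is entire: the polynomial factor absorbs the simple poles of $\Aa_k^{\pm}$ at $w=s,\,k-s,\,k$, while the simple zero of $1/\Gamma(w)$ at $w=0$ cancels the possible pole there. The previous two steps show that $\tilde g$ is polynomially bounded in $|v|$ on each of the two vertical boundary lines, with exponents $3$ and $2\kappa+2\eps-k+3$ respectively. The boundedness of $w\mapsto\Aa_k^{\pm}(w,s)$ in lacunary vertical strips recorded in the preceding proposition, together with Stirling, ensures the mild global growth hypothesis needed for Phragm\'en--Lindel\"of to apply. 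Linear interpolation of the boundary exponents then produces $|\tilde g(u+iv)| \ll_s (1+|v|)^{\kappa+\eps-u+3}$ throughout the strip, and unwinding the definition of $\tilde g$ (using $|(w-s)(w-k+s)(w-k)|\asymp(1+|v|)^3$ for large $|v|$) recovers precisely the claimed bound for all sufficiently large $|v|$; the exclusion of small $|v|$ accounts for the finitely many poles in the strip.

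The main technical input is the opening step, namely absolute convergence of $L(w)$ on $\re w = \kappa+\eps$ with an $s$-dependent constant $C_1(s)$. This rests on the sharp coefficient estimates for $\alpha_{n,k}^{\pm}(s)$ developed in Section~\ref{sec:estimates}; once those are in hand, everything else is routine convexity and Stirling bookkeeping.
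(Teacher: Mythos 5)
Your overall architecture --- bound the kernel on the right edge $\re w=\kappa+\eps$ by absolute convergence of the Dirichlet series, transfer to the left edge $\re w=k-\kappa-\eps$ via the functional equation $\Aa_k^{\pm}(k-w,s)=\pm\Aa_k^{\pm}(w,s)$ and Stirling, remove the poles with a polynomial factor times $\pi^{w}/\Gamma(w)$, and interpolate by Phragm\'en--Lindel\"of --- is exactly the paper's argument. The gap is in the opening step, which is the only nontrivial one. You justify absolute convergence of $\sum_n \alpha_{n,k}^{\pm}(s)n^{-w}$ on the line $\re w=\kappa+\eps$ by noting that $\kappa+\eps>k=\max(k,\re s,\re(k-s))$; but that inequality only locates the half-plane where the Mellin \emph{integral} defining $\Aa_k^{\pm}(w,s)$ converges, and says nothing about the abscissa of absolute convergence of the Dirichlet series. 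Nor do the pointwise coefficient bounds of Section~\ref{sec:estimates} suffice: Proposition~\ref{prop:alphanbd} gives $\alpha_{n,k}^{\pm}(s)\ll_k c(s)\,n^{k/2}(1+\log^{2-k}n)$ for $0<k<2$, which yields absolute convergence only for $\re w>k/2+1$; for the case $k=1/2$ on which Theorem~\ref{thm:zeta} rests, that is $\re w>5/4$, strictly to the right of $\kappa+\eps=1+\eps$ for small $\eps$. So the step you call "guaranteed" is precisely the one that is not.

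The paper closes this by a mean-square argument: Cauchy--Schwarz over dyadic blocks combined with Proposition~\ref{prop:squaresum}, namely $\sum_{n\le x}|\alpha_{n,k}^{\pm}(s)|^2\ll_s x^{k+|k-1|}\log^2 x=x^{2\kappa-1}\log^2 x$, which gives absolute convergence exactly for $\re w>\kappa$. This is also where the exponent $\kappa=\max(k,1)$ actually comes from; your reasoning, if it were valid, would place the threshold at $k$ rather than $\max(k,1)$, a claim nothing in your argument (or in the paper) supports when $k<1$. Once this step is replaced by the dyadic Cauchy--Schwarz plus Proposition~\ref{prop:squaresum} input, the remaining boundary bounds, the Stirling comparison of the Gamma factors on the left edge, and the convexity interpolation in your write-up do match the paper's proof (the paper's crude in-strip growth bound plays the role of your appeal to boundedness in lacunary strips).
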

A weaker version of Lemma~\ref{lem:Aest}, with a cruder bound on the growth in the vertical direction, may be obtained directly from Proposition~\ref{prop:modkereq}. This would in turn suffice to establish a weaker version of Theorem~\ref{thm:zeta}, as alluded to in the introduction.
\begin{proof}[Proof of Lemma~\ref{lem:Aest}]
	By the Cauchy--Schwarz inequality, we have
	\[
	\left(\sum_{2^l \le n < 2^{l+1}} |\alpha^{\pm}_{n,k}(s)| n^{-u}\right)^2
	\le 2^{l(1-2u)} \sum_{2^l \le n < 2^{l+1}} |\alpha^{\pm}_{n,k}(s)|^2  .\]
	Therefore, by applying Proposition~\ref{prop:squaresum}
	from Section~\ref{sec:estimates} we get
	\[ \sum_{n\le x} |\alpha^{\pm}_{n,k}(s)|^2 \ll_s x^{k+|k-1|}\log^2\!x,\]
	and thus the Dirichlet series representing
	$w\mapsto A_{k}^{\pm}(w,s)$ converges
	absolutely for $\re w > \kappa$.
	Let us set
	\[D(w):=\frac{\pi^w}{\Gamma(w)}\Aa_k^{\pm}(w,s). \]
	Note that $D(w)$ can have poles only at $w=k,s,k-s$, since the potential
	pole at $w=0$ is canceled by the pole at $w=0$ of $\Gamma(w)$.
	Since the Dirichlet series converges absolutely for $\re w>\kappa$,
	for arbitrary fixed $\eps>0$ we have
	\[ D(\kappa+\eps + i v)\ll 1 \quad \text{and} \quad D(k-\kappa-\eps + i v)\ll (1+|v|)^{2\kappa-k+2\eps} .\]
	Here the second inequality follows because of the absolute convergence
	of the Dirichlet series, and the second follows by the functional equation
	for $\Aa_{k}^{\pm}(k-w,s)$. Now
	\[ F(w) \coloneqq D(w) (w-1)(w-s)(w-(k-s)) \]
	is an entire function satisfying
	\[ F(\kappa+\eps + i v)\ll (1+|v|)^3 \quad \text{and} \quad F(k-\kappa-\eps + i v)\ll (1+|v|)^{2\kappa-k+3+2\eps} .\]
	The deduction of the functional equation for $D(w,s)$ implies
	the crude bound
	\[ |F(u+iv)| \ll |v|^3|\Gamma(u+iv)|^{-1} \]
	in the strip $k-\kappa-\eps \le u  \le \kappa+\eps$.
	We may therefore use the Phragm\'{e}n--Lindel\"{o}f principle
	in a familiar way (see for example \cite[Sect. 5.65]{Tf}) to conclude
	that $F(w) ((\kappa+2\eps-w)i)^{(w-\kappa-\eps)-3}$ is a bounded analytic
	function in that strip.
\end{proof}

To conclude the discussion of~$\Aa_k^{\pm}(w,s)$,
note that, as a corollary of~\eqref{eq:spvalue0}, we obtain
	\begin{equation} \label{eq:alplus}
	\Aa_{l/2}^{+}(w,0) = -\pi^{-w}\Gamma(w)\sum_{n\ge1}\frac{r_l(n)}{n^w},
	\qquad l=1,2,3,
	\end{equation}
where $r_l(n)$ is the number of representations of~$n$ as a sum of squares
of~$l$ integers. In particular,
$\Aa_{1/2}^{+}(w,0)=-2\pi^{-w}\Gamma(w)\zeta(2w)$.
Similarly,~\eqref{eq:spvalue0} implies
	\begin{equation} \label{eq:alminus}
	\Aa_{l/2}^{-}(w,0) = \Aa_{l/2}^{-}(w,l/2) = 0,
	\qquad l=1,2,3.
	\end{equation}
Using the results from Section~\ref{sec:fkspval} one can also
easily obtain explicit expressions for $\mathcal{A}_{l/2}^{\pm}(w,0)$
for other values of~$l$.

\subsection{Construction of the Dirichlet series kernels}
\label{sec:basisconstr}

We will now construct, in accordance with the general setup of
Section~\ref{sec:generalkernel}, the Dirichlet series kernels corresponding
to the interpolation formula of Theorem~\ref{thm:zeta}. This will provide
us with an explicit construction of the functions $U_n(z)$ and $V_{\rho,j}(z)$
and will be used to prove our main result.

We define the Dirichlet series kernels $H_{\pm}(w,s)$ by
	\begin{equation} \label{eq:zetakernel}
	\begin{split}
	&H_{-}(w,s) \coloneqq
	\frac{\zeta^{*}(s)}{2}\frac{\Aa_{1/2}^{-}(w/2,s/2)}{\zeta^{*}(w)}
	= \sum_{n\ge1}\frac{h_n^{-}(s)}{n^{w/2}},
	\qquad s\ne 0,1,\\
	&H_{+}(w,s) \coloneqq
	\frac{\zeta^{*}(s)}{2}\Big(\frac{\Aa_{1/2}^{+}(w/2,s/2)}{\zeta^{*}(w)}-\alpha_{1,1/2}^{+}(s/2)\Big)
	= \sum_{n\ge2}\frac{h_n^{+}(s)}{n^{w/2}},
	\qquad s\ne 0,1,
	\end{split}
	\end{equation}
where $\zeta^{*}(s)=\Gamma_{\RR}(s)\zeta(s)$ is the completed zeta
function. Formulas~\eqref{eq:alplus} and~\eqref{eq:alminus} show
that both functions extend analytically also to $s=0,1$.
Note that the coefficients $h_{n}^{\pm}(s)$ can be computed using
M\"obius inversion as
	\begin{equation} \label{eq:hnmoebius}
	h_n^{\pm}(s) = \frac{\zeta^{*}(s)}{2}\sum_{d^2|n} \mu(d)\alpha_{n/d^2,1/2}^{\pm}(s/2).
	\end{equation}
From this and~\eqref{eq:alplus},~\eqref{eq:alminus} we see that $h_n^{\pm}(s)$,
$n\ge 1$ are entire (we also set $h_1^{+}(s)\coloneqq 0$).

Since $\zeta^*(1-s)=\zeta^*(s)$, the functional equations~\eqref{eq:dirkerfeq}
imply
	\begin{equation} \label{eq:dirkerfeq2}
	\begin{split}
	H_{\pm}(1-w,s) = \pm H_{\pm}(w,s), \\
	H_{\pm}(w,1-s) = \mp H_{\pm}(w,s).
	\end{split}
	\end{equation}
Moreover, from~\eqref{eq:akintsym} and the fact that $\zeta^{*}(w)$
has simple poles at $w=0,1$, we see that for a fixed $s$ the
function $w\mapsto H_{\pm}(w,s)$ has simple poles at $w=s$ and $w=1-s$
with residues $1$ and $\mp 1$, and a pole at $w=\rho$ of order at
most $m(\rho)$ for each nontrivial zero $\rho$ of $\zeta(s)$.

From the above discussion we see that, if we define
$E_{\pm}(w,s)=H_{+}(w,s)\pm H_{-}(w,s)$, then $w\mapsto E_{+}(w,s)$
(respectively $w\mapsto E_{-}(w,s)$) is a Dirichlet series with poles at $w=\rho$
for nontrivial zeros of $\zeta$ and at $w=s$ (respectively $w=1-s$). According to
the setup of Section~\ref{sec:generalkernel}, this suggests that $E_{\pm}(w,s)$
(up to an appropriate linear change of variables that maps the critical line
to~$\RR$) are Dirichlet series kernels associated to a Fourier interpolation
formula with $\Lambda=\{(\rho-1/2)/i \colon \zeta^{*}(\rho)=0\}$
and $\Lambda^{*}=\{\pm\log n/(4\pi)\}_{n\ge1}$.
Motivated by this we define $U_n^{\pm}(z)$ as
	\begin{equation} \label{eq:basisun}
	\begin{split}
	U_n^{\pm}(z) \coloneqq h_n^{\mp}(1/2+iz).
	\end{split}
	\end{equation}
Similarly, we define $V_{\rho,j}^{\pm}(z)$ by
	\begin{equation*} \label{eq:basisvrho}
	H_{\mp}(w,1/2+iz)
	= \sum_{j=0}^{m(\rho)-1} i^j\frac{j!\,V_{\rho,j}^{\pm}(z)}{(w-\rho)^{j+1}}+O(1)
	,\quad w\to \rho,
	\end{equation*}
or, equivalently,
	\begin{equation} \label{eq:basisvrho2}
	V_{\rho,j}^{\pm}(z) \coloneqq \frac{1}{2\pi i}\int_{|w-\rho|=\eps}
	\frac{i^{-j}(w-\rho)^j}{j!}H_{\mp}(w,1/2+iz)dw,
	\end{equation}
where $\eps$ is chosen so that $\eps<|\rho-1/2\pm z|$ and $\eps<|\rho-\rho'|$
for all $\rho'\ne \rho$ such that $\zeta^{*}(\rho')=0$.

We now turn to rigorously proving Theorem~\ref{thm:zeta}.

\subsection{Proof of Theorem~\ref{thm:zeta}}
\label{sec:maintheoremproof}
As in the proof of Lemma~\ref{lem:Aest} we will use certain estimates for the
coefficients $\alpha_{n,k}^{\pm}(s)$ that are somewhat technical
and are proved in Section~\ref{sec:estimates}.

We define the auxiliary functions
	\begin{equation*}
	\begin{split}
	D_{-}(w,s) \coloneqq
	\frac{\Gamma_{\RR}(s)}{2}\sum_{n\ge1}\frac{\alpha_{n,1/2}^{-}(s/2)}{n^{w/2}}
	,\qquad
	D_{+}(w,s) \coloneqq
	\frac{\Gamma_{\RR}(s)}{2}\sum_{n\ge1}
	\frac{\alpha_{n,1/2}^{+}(s/2)-\alpha_{1,1/2}^{+}(s/2)}{n^{w/2}},
	\end{split}
	\end{equation*}
and note that
	\[H_{\pm}(w,s) \coloneqq \frac{\zeta(s)}{\zeta(w)} D_{\pm} (w,s).\]

\begin{lemma}\label{lem:Dest}
	Suppose that $1/2 \le \re s <1$. Then
    \begin{align}
    |D_{\pm}(u+iv,s)| & \le C(s) (1+|v|)^{(2+\eps-u)/2}, \quad u\le 2+\eps, \label{eq:PL} \\ \label{eq:Dpart}
    H_{\pm}(1+\eps+iv,s) & = \sum_{n\le x} h_{n}^{\pm}(s)n^{-(1+\eps+iv)/2} + O\big((1+|v|) x^{-\eps/3 } \big) \end{align}
for every $\eps>0$, where $C(s)$ is a positive
constant that depends only on~$s$.
\end{lemma}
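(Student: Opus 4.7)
The plan is to treat (i) as an essentially direct corollary of Lemma~\ref{lem:Aest}, and (ii) via summation by parts using a partial-sum estimate for the coefficients $\alpha_{m,1/2}^{\pm}(s/2)$ that will be established in Section~\ref{sec:estimates}.

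For (i), one starts from the observation that, by the definitions,
\[
D_{-}(w,s) = \frac{\Gamma_{\RR}(s)}{2}\cdot \frac{\pi^{w/2}}{\Gamma(w/2)}\,\Aa_{1/2}^{-}(w/2,s/2),
\]
with an analogous identity for $D_{+}$ augmented by the correction term $-\frac{\Gamma_{\RR}(s)}{2}\alpha_{1,1/2}^{+}(s/2)\zeta(w/2)$ coming from subtracting the $n=1$ term of the underlying Dirichlet series. One then applies Lemma~\ref{lem:Aest} with $k=1/2$, so that $\kappa=\max(k,1)=1$ and the hypothesis $k/2\le \re(s/2)<k$ becomes exactly $1/2\le \re s<1$; this gives
\[
|\Aa_{1/2}^{\pm}(w/2,s/2)| \le C(s)\,\Big|\frac{\Gamma(w/2)}{\pi^{w/2}}\Big|\,(1+|v|)^{1+\eps-u/2}, \qquad u\le 2+\eps.
\]
The Gamma/pi prefactor cancels to yield \eqref{eq:PL}. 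The correction term in the $+$ case obeys a strictly better estimate throughout $u\le 2+\eps$: by the standard convexity bound for $\zeta$ in the critical strip and the functional equation for $\zeta$ outside it, $|\zeta(w/2)|$ is dominated by $(1+|v|)^{(2+\eps-u)/2}$, so the correction is absorbed.

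For (ii), the plan is to apply summation by parts. Setting $S(y):=\sum_{n\le y}h_n^{\pm}(s)$, Abel's formula gives
\[
\sum_{n>x}\frac{h_n^{\pm}(s)}{n^{(1+\eps+iv)/2}} = -\frac{S(x)}{x^{(1+\eps+iv)/2}} + \frac{1+\eps+iv}{2}\int_x^{\infty}\frac{S(y)}{y^{(3+\eps+iv)/2}}\,dy,
\]
so the task reduces to bounding $S(y)$. Inserting the Möbius formula~\eqref{eq:hnmoebius} one obtains
\[
S(y) = \frac{\zeta^{*}(s)}{2}\sum_{d\le \sqrt{y}}\mu(d)\,T_{\pm}(y/d^2,s),\qquad T_{\pm}(x,s):=\sum_{m\le x}\alpha_{m,1/2}^{\pm}(s/2).
\]
Granted a partial-sum estimate $|T_{\pm}(x,s)|\le C(s)\,x^{1/2}\log x$ from Section~\ref{sec:estimates}, the Möbius sum yields $|S(y)|\ll_s y^{1/2}\log^2 y$. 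Plugging into the Abel formula, the boundary term is $O(x^{-\eps/2}\log^2 x)=O(x^{-\eps/3})$, and the integral contributes $O((1+|v|)\,x^{-\eps/3})$, with the extra factor $1+|v|$ coming from the $(1+\eps+iv)/2$ prefactor and the exponent $\eps/3$ absorbing the logarithmic factors.

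The main obstacle is obtaining the required partial-sum estimate for $T_{\pm}(x,s)$. The mean-square bound $\sum_{m\le x}|\alpha_{m,1/2}^{\pm}(s/2)|^2 \ll_s x\log^2 x$ from Proposition~\ref{prop:squaresum}, combined with Cauchy--Schwarz, only yields $|T_{\pm}(x,s)|\ll_s x\log x$, which is too weak by nearly a full power of $x$. Genuine cancellation among the Fourier coefficients must be exploited---presumably through careful deformation of the contour in the integral representation~\eqref{eq:alphapw}, taking advantage of the oscillation of $e^{-2\pi i t(s-1)}$ together with the specific analytic structure of $g_{m,1/2}^{\pm}$ near the endpoints of the contour. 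This is why the delicate quantitative analysis is deferred to Section~\ref{sec:estimates}.
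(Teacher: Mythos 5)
Your proposal is correct and takes essentially the same route as the paper: part (i) follows by cancelling the $\Gamma$-factor against Lemma~\ref{lem:Aest} with $k=1/2$, and part (ii) by Abel summation together with the M\"obius formula~\eqref{eq:hnmoebius}, reducing everything to the bound $\sum_{m\le x}\alpha^{\pm}_{m,1/2}(s/2)\ll_s x^{1/2}\log x$. The partial-sum estimate that you flag as the remaining obstacle is precisely Proposition~\ref{prop:partsum} of Section~\ref{sec:estimates} (applied with $k=1/2$ and $s$ replaced by $s/2$), which is exactly what the paper's proof invokes, so at this point nothing further needs to be supplied.
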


\begin{proof}
The first claim follows from Lemma~\ref{lem:Aest}.
We next prove~\eqref{eq:Dpart}. Set $A(x)\coloneqq \sum_{n\le x} h_n^{\pm}(s)$.
Using summation by parts, we get
    \[H_{\pm}(w,s)=\sum_{n\le x}h_n^{\pm}(s) n^{-w/2}+\frac{w}{2} \int_x^{\infty} (A(y)-A(x)) y^{-w/2-1} dy \]
when $\re w>1$, so it suffices to show that
    \[ A(x) \ll_{\eps} x^{1/2+\eps/6} \]
for every $\eps>0$. But this holds because
    \[ A(x)=\frac{\zeta^{*}(s)}{2}\sum_{d\le \sqrt{x}} \mu(d) \sum_{n\le x/d^2} \alpha_{n,1/2}^{\pm}(s/2) \ll \sqrt{x} \sum_{d\le \sqrt{x}} \frac{1}{d} \le \sqrt{x} \log x \]
when $1/2\le \re s <1$ by Proposition~\ref{prop:partsum}.
\end{proof}

We also require an additional lemma which is a result
from a paper by Ramachandra and Sankaranarayanan~\cite[Thm.~2]{RS}.
\begin{lemma} \label{lem:Ind} There exists a positive constant $c$ such that
    \[ \min_{T\le t \le T+T^{1/3}} \max_{1/2\le \sigma \le 2} |\zeta(\sigma+it)|^{-1} \le \exp\big(c(\log\log T)^2\big) \]
    holds for all sufficiently large $T$.
\end{lemma}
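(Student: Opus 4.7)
The target bound $\exp(c(\log\log T)^2)$ is considerably weaker than the ``typical'' size of $|\log|\zeta(\sigma+it)||$ predicted by Selberg's work, which is of order $\sqrt{\log\log T}$. My plan is to isolate a $t \in [T, T+T^{1/3}]$ exhibiting this typical behavior uniformly in $\sigma \in [1/2, 2]$, by combining a short-interval moment bound, Chebyshev's inequality on a discrete net of sample points in $\sigma$, and a zero-avoidance argument that transfers the discrete bound to all $\sigma$.

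\smallskip

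\textbf{Step 1 (short-interval moments).} For each fixed $\sigma_0 \in [1/2, 2]$ and integer $k \ge 1$, establish
\[
\int_T^{T+T^{1/3}} \bigl|\log|\zeta(\sigma_0+it)|\bigr|^{2k}\, dt \ll T^{1/3}(Ck\log\log T)^{k}
\]
for an absolute constant $C$. The long-interval version is classical and due to Selberg: one approximates $\log\zeta$ by a short Dirichlet polynomial over primes via a truncated Euler--Hadamard product, applies the standard mean value theorem for Dirichlet polynomials, and uses zero-density estimates of the form $N(\sigma, 2T) \ll T^{1-c(\sigma-1/2)}$ to control residual contributions. Adapting this to the short interval $[T, T+T^{1/3}]$ requires additional care in the choice of truncation length and some use of the functional equation.

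\smallskip

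\textbf{Step 2 (Chebyshev on a net).} Take a net $\{\sigma_j\}$ of $\asymp \log T$ equally spaced points in $[1/2, 2]$ with spacing $1/\log T$, and set $k := \lfloor c_1\log\log T\rfloor$ and $M := c_2(\log\log T)^2$. Chebyshev's inequality applied to Step~1 yields
\[
\mu\bigl\{t \in [T, T+T^{1/3}] : \bigl|\log|\zeta(\sigma_j+it)|\bigr| > M\bigr\} \ll T^{1/3}\Bigl(\frac{Ck\log\log T}{M^2}\Bigr)^{k} \ll T^{1/3}(\log T)^{-10}
\]
for suitable $c_1, c_2 > 0$. A union bound over the $\asymp \log T$ sample points leaves a set of $t$'s of measure $\gg T^{1/3}$ on which $|\log|\zeta(\sigma_j+it)|| \le M$ holds simultaneously for every $j$.

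\smallskip

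\textbf{Step 3 (uniformity in $\sigma$ via zero-avoidance).} To pass from the net to all $\sigma \in [1/2, 2]$, use
\[
\log|\zeta(\sigma+it)| - \log|\zeta(\sigma_j+it)| = \int_{\sigma_j}^{\sigma} \re\frac{\zeta'(u+it)}{\zeta(u+it)}\, du.
\]
The classical identity $\zeta'/\zeta(s) = \sum_{|\gamma-t|\le1}(s-\rho)^{-1} + O(\log T)$ gives $|\zeta'/\zeta(u+it)| \ll \log T$ uniformly for $u \in [1/2, 2]$, provided $t$ is at distance $\ge 1/\log T$ from every imaginary part $\gamma$ of a nontrivial zero. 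By Riemann--von Mangoldt there are $\ll T^{1/3}\log T$ such $\gamma$'s in $[T-1, T+T^{1/3}+1]$, so the set of $t$'s violating this distance condition has small measure and can be removed. On the remaining $t$'s the transfer error is $|\sigma-\sigma_j| \cdot O(\log T) = O(1)$, yielding $|\log|\zeta(\sigma+it)|| \le M + O(1) \le c(\log\log T)^2$, as required.

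\smallskip

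\textbf{Main obstacle.} The crux is Step~1: uniformly controlled short-interval high moments of $\log|\zeta|$ with $k$ growing like $\log\log T$. While Selberg's long-interval moment method gives the model, the transition to the shorter range $[T, T+T^{1/3}]$ requires delicate control over truncation errors and uniform zero-density inputs, and this is the main technical contribution of Ramachandra and Sankaranarayanan.
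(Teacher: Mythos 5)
The paper does not actually prove this lemma: it is quoted verbatim from Ramachandra and Sankaranarayanan \cite[Thm.~2]{RS}, so there is no internal argument to compare yours against; the authors only note (in Section~4.3) that the technique behind it relies on the Euler product. Judged on its own merits, your proposal is a strategy outline rather than a proof, and the gap is concentrated in Step~1: the uniform bound $\int_T^{T+T^{1/3}}|\log|\zeta(\sigma_0+it)||^{2k}\,dt\ll T^{1/3}(Ck\log\log T)^k$, with $C$ absolute, uniformly in $\sigma_0\in[1/2,2]$ (including at and near the critical line, where the negative tail of $\log|\zeta|$ is controlled by nearby zeros) and uniformly in $k$ up to $\asymp\log\log T$, in an interval of length only $T^{1/3}$. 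This is precisely the hard content of the lemma -- everything in Steps~2--3 is routine once it is granted -- and it is not available ``off the shelf''; Selberg's and Tsang's moment estimates are for long intervals, and the short-interval adaptation you gesture at (truncation lengths, short-interval zero-density input, behaviour at $\sigma_0=1/2$) is exactly where the work lies. It is also a mischaracterization to say this is what Ramachandra--Sankaranarayanan prove; their argument for Theorem~2 does not go through high moments of $\log|\zeta|$ in short intervals.

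There are also two quantitative slips in Step~3. Under the hypothesis that $t$ is at distance $\ge 1/\log T$ from every ordinate, the classical estimate gives only $\zeta'/\zeta(u+it)\ll(\log T)^2$ for $1/2\le u\le 2$ (there are $\asymp\log T$ zeros with $|\gamma-t|\le1$, each contributing up to $\log T$), not $\ll\log T$; with your net of spacing $1/\log T$ the transfer error is then $O(\log T)$, which swamps $c(\log\log T)^2$. This is repairable -- take spacing $\asymp(\log\log T)^2(\log T)^{-2}$, which your Chebyshev bound can absorb since it beats any fixed power of $\log T$ -- but as written the step fails. Likewise, removing all $t$ within $1/\log T$ of an ordinate deletes a set of measure $\asymp T^{1/3}$ (a constant fraction of the interval, since there are $\asymp T^{1/3}\log T$ ordinates there), not a ``small'' set; one must shrink the avoidance radius by a constant and check the leftover set from Step~2 still has positive measure. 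In sum: the architecture (Euler product/typical-size philosophy, net in $\sigma$, zero-avoidance transfer) is reasonable and in the same spirit as the multiplicative-structure remark the authors make, but without a proof of Step~1 the proposal does not establish the lemma.
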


We now consider a general function $f$ in $\mathcal{H}_1$ and prove a representation that splits naturally into an even and an odd part, so that the even part yields Theorem~\ref{thm:zeta}. We set
	\[ F(s)\coloneqq f\left(\frac{s-1/2}{i}\right) \]
let
	\[ F_+(s)\coloneqq\frac{F(s)+F(1-s)}{2}
	   \quad \text{and} \quad
	   F_-(s)\coloneqq\frac{F(s)-F(1-s)}{2} \]
be respectively the even and odd part of $F(s)$.
The proof naturally splits into three parts.

\subsubsection*{Proof of \eqref{eq:add}}
Consider the operator
	\begin{equation} \label{eq:defT} (R_{\delta}F_{\delta})(s)  \coloneqq \frac{1}{4\pi i} \int_{c-i\infty}^{c+i\infty} \left[H_{-\delta}(w,s)  - H_{-\delta}(1-w,s) \right] F_{\delta}(w) dw ,
	\end{equation}
where $\delta=\pm$ and both $1-c < \re s < c$ and $c>1$.
The proof follows the usual scheme of computing these integrals in
two different ways. First, using the functional equation for
$H_{\pm}(1-w,s)$, we express the integrand in \eqref{eq:defT}
as $F_{\delta} (w)$ times a Dirichlet series in $w$. We then apply
\eqref{eq:Dpart} and the assumption that $f$ is in $\mathcal{H}_1$
to infer that
	\begin{equation} \label{eq:rhs1}
	\begin{split}
	(R_{\delta}F_{\delta})(s) & = \frac{1}{2\pi i} \int_{c-i\infty}^{c+i\infty} H_{-\delta}(w,s) F_{\delta}(w)   dw \\
	&  =\frac{1}{2\pi i} \sum_{n\le x}
	h_{n}^{-\delta}(s) \int_{c-i\infty}^{c+i\infty} n^{-w/2} F_{\delta}(w) dw+O(x^{-\eps/3}) \\
	& = \sum_{n\le x} (\mathcal{M}^{-1}F_{\delta})(\sqrt{n})
	h_n^{-\delta}(s)+O(x^{-\eps/3}) ,
	\end{split}
	\end{equation}
where $\mathcal{M}^{-1}F_{\delta}$ is the inverse Mellin transform
of $F_{\delta}$. Since $U_n(z)=U_n^{+}(z)$ are defined by~\eqref{eq:basisun},
this gives the first sum on the right-hand side of~\eqref{eq:add} .

On the other hand, by viewing the integral in \eqref{eq:defT} as two contour integrals and using the residue theorem, we find that
	\begin{equation} \label{eq:rhs2}
	(R_{\delta}F_{\delta})(s)= F_{\delta}(s) +  \sum_{\rho: 0<\gamma \le T} \operatorname{Res}_{w=\rho}H_{-\delta}(w,s)F_{\delta}(w) + E(s,T),
	\end{equation}
where
	\[ E(s,T) \ll \max_{1/2\le u\le 2}|\zeta(u+iT)|^{-1}  T^{-1/4+\eps}
	+\int_{|v|>T} |f(c+it)| (1+|v|)^{(2-c+\eps)/2} dv \]
for $\eps $ small enough.
Indeed, $D_{-\delta}(u+iv, s) \ll (1+|v|)^{3/4+\eps}$ by
Lemma~\ref{lem:Dest} and $F_{\delta}(u+iv)\ll (1+|v|)^{-1} $
for sufficiently small~$\eps$ by the assumption that~$f$ is in $\mathcal{H}_1$.
Given a large positive integer $k$, we choose $T_k$ in $[2^k, 2^{k+1}]$ such that
	\[ \max_{1/2 \le u \le 2} |\zeta(u+iT_k)|^{-1} \le
	\exp\big(c(\log\log T_k)^2\big), \]
which is feasible in view of Lemma~\ref{lem:Ind}.
Let us define \change{$U_n=\frac{n^{-1/4}}{2\pi}U_n^{+}$} and $V_{\rho,j}=V_{\rho,j}^{+}$
as in~\eqref{eq:basisun} and~\eqref{eq:basisvrho2}.
Then we obtain~\eqref{eq:add} by comparing the right-hand sides
of~\eqref{eq:rhs1} and~\eqref{eq:rhs2} and letting $k\to \infty$.

\subsubsection*{Rapid decay of the basis functions $U_n(z)$ and $V_{\rho,j}(z)$}
By~\eqref{eq:hnmoebius}, the functions $h_n^{\pm}(s)$ have rapid decay in
vertical strips since
$|\zeta^{*}(\sigma+it)|=O(t^{a_{\sigma}}e^{-\pi t/4})$
and by~\eqref{eq:alphapw} we have
that $|\alpha_{m,1/2}^{\pm}((\sigma+it)/2)|=O(e^{\pi t/4}(1+|t|)^{-k})$
for all $k>0$. Thus $U_n(z)$ is rapidly decaying.

To get the corresponding property of $V_{\rho,j}(z)$,
we use \eqref{eq:basisvrho2} to infer that it is sufficient to show that
$s\mapsto \zeta^{*}(s)\Aa_{1/2}^{\pm}(w,s)$ is rapidly decaying
in vertical strips (uniformly for $w$ in compact sets).
In view of~\eqref{eq:akintsym}, it is enough to check that
	\[s\mapsto
	\zeta^{*}(s)\int_{1}^{\infty}(F_k^{\pm}(it,s)-\alpha_{0,k}^{\pm}(s))(t^{w-1}\pm t^{k-w-1})dt\]
is rapidly decaying in vertical strips.
This is clear from Proposition~\ref{prop:rapiddecay}.

\subsubsection*{The interpolatory properties \eqref{eq:interp}}
From~\eqref{eq:reltosqrtn} and~\eqref{eq:hnmoebius}
we see that~$h_{n}^{\pm}(s)$ is the Mellin transform of
	\[ u_n^{\pm}(x)\coloneqq \sum_{k=1}^{\infty}
	\sum_{d^2|n} \mu(d) b^{\pm}_{n /d^2}(kx).\]
Therefore, for $m, n\ge 1$, from the interpolatory
properties of $b_m^{\eps}(x)$~\eqref{eq:sqrtninterpolation} we conclude that
	\[ u_n^{\eps} (\sqrt{m})
	= \sum_{k=1}^{\infty}
	  \sum_{d^2|n} \mu(d) b^{\eps}_{n /d^2}(k\sqrt{m})
	= \sum_{d| \sqrt{n/m}} \mu(d)
	= \begin{cases} 1, & m=n, \\ 0, & m\neq n. \end{cases}\]
Since~$U_n$ is an even function and \change{$\widehat{U_n}(\xi)=n^{-1/4}e^{\pi \xi} u_n^{-}(e^{2\pi \xi})$}, this implies the interpolatory properties of~$U_n$ at $\frac{\log m}{4\pi}$. By definition $h_n^{\pm}(s)$ vanishes at $s=\rho$ to the same order
as $\zeta^{*}(s)$, and we therefore also get
that $U_n^{(j)}(\frac{\rho-1/2}{i})=0$ for $0\le j < m(\rho)$.

Next, let us check the interpolatory properties of $V_{\rho,j}(z)$.
From~\eqref{eq:basisvrho2} we immediately see
that $V_{\rho,j}^{(j')}(\frac{\rho'-1/2}{i})=0$ for $0\le j'< m(\rho')$,
where $\rho'\ne \rho$ is a different nontrivial zero of~$\zeta$
with $\im \rho'>0$.
The property $V_{\rho,j}^{(j')}(\frac{\rho-1/2}{i})=\delta_{j,j'}$ for
$0\le j'< m(\rho)$ again follows from~\eqref{eq:basisvrho2}, since
for any $\eps>0$ such that $\eps<|\rho-\rho'|$ for all $\rho'\ne \rho$,
the difference
	\[
	V_{\rho,j}^{(j')}(z) - \frac{1}{2\pi i}\int_{|w-\rho|=\eps}
	\frac{i^{j'-j}(w-\rho)^j}{j!}
	\frac{\partial^{j'} H_{-}}{\partial s^{j'}}(w,1/2+iz)dw
	\]
is equal to $0$ for $\eps<|\rho-1/2\pm z|$ and to $\delta_{j,j'}$
for $\eps>|\rho-1/2\pm z|$.

Finally, we need to verify that
$\widehat{V_{\rho,j}}(\pm \frac{\log n}{4\pi})=0$. To this end, we consider
	\[U_{\pm}(w,x) \coloneqq \Gamma_{\RR}(w)\sum_{n\ge 1}\frac{\sum_{k\ge1}b_n^{\pm}(kx)}{n^{w/2}}\]
and note that $H_{\pm}(w,s)$ is the Mellin transform in $x$
of $U_{\pm}(w,x)/\zeta^{*}(w)$. The function $U_{\pm}(w,x)$ continues
analytically to a meromorphic function of $w$ in $\CC$ since it is
the Mellin transform of $F_{1/2}^{\pm}(it,\phi_x)$,
where $\phi_x(z)=\frac{1}{2}(\theta(zx)-1)$.
Setting $x=\sqrt{m}$ and using~\eqref{eq:sqrtninterpolation}, we obtain
	\[U_{\pm}(w,\sqrt{m})=\Gamma_{\RR}(w)
	\sum_{\substack{n\ge 1\\ n=k^2m}}\frac{1}{n^{w/2}}
	= \zeta^{*}(w)m^{-w/2}\]
first for $\re w$ sufficiently large, and then, by analytic continuation,
for all $w\ne 0,1$.
\change{The numbers $\widehat{V_{\rho,j}}(\xi)$, $j\ge0$ are simply the coefficients
of the principal part of the Laurent series of $w\mapsto U_{\pm}(w,e^{2\pi
\xi})/\zeta^{*}(w)$ at $w=\rho$. Since for $\xi=\frac{\log m}{4\pi}$ the latter 
function specializes to an entire function~$w\mapsto m^{-w/2}$, we get that
$\widehat{V_{\rho,j}}(\pm \frac{\log m}{4\pi})=0$}.

This concludes the proof of Theorem~\ref{thm:zeta}.\qed

\subsection{Relation with the Riemann--Weil formula} We return briefly to the viewpoint mentioned in the introduction, namely that we may think of the Riemann--Weil explicit formula as expressing a
linear functional $W$ in terms of our interpolation basis. This functional $W$ acts on functions $f$ in $\mathcal{H}_1$ and is defined by the left-hand side of \eqref{eq:RW}:
\begin{equation} \label{eq:Rdef} Wf:=\frac{1}{2\pi} \int_{-\infty}^{\infty} f(t)  \left(\frac{\Gamma'(1/4+it/2)}{\Gamma(1/4+it/2)} - \log \pi \right)dt+f\Big(\frac{i}{2}\Big)+f\Big(\frac{-i}{2}\Big).\end{equation}
By the equality in \eqref{eq:RW} and the interpolatory properties of the basis functions, we then find that
\[ WU_{n}=\begin{cases}  \pi^{-1} \Lambda(n)/\sqrt{n}, & n \ \text{a square} \\
                                                  0, & \text{otherwise,} \end{cases} \]
while
\[ WV_{\rho,j}=\begin{cases} 2, & j=0 \\
                                                  0, & j>0 . \end{cases} \]
It would be interesting to know whether these curious properties of the basis functions could be obtained without resorting to the Riemann--Weil formula. In the same vein, it may be worthwhile searching for further relations between our Fourier interpolation formula and the Riemann--Weil explicit formula.

\section{Fourier interpolation with zeros of Dirichlet $L$-functions \\ and other Dirichlet series}
\label{sec:lfunkernels}

The methods developed above give without much additional effort Fourier interpolation formulas associated with the nontrivial zeros of Dirichlet $L$-functions and, more generally, of functions that are representable by Dirichlet series and satisfy a functional equation of the form
$L^{*}(k-s)= \pm L^{*}(s)$, for
	\begin{equation*} \label{eq:FEq}
	L^{*}(s) = r^{s/2}\Gamma_{\RR}(s)L(s)
	\qquad \mbox{or}\qquad
	L^{*}(s) = r^{s/2}\Gamma_{\CC}(s)L(s)
	\end{equation*}
where $r$ is some positive number. Here we use the common notation
$\Gamma_{\RR}(s)\coloneqq\pi^{-s/2}\Gamma(s/2)$,
$\Gamma_{\CC}(s)\coloneqq2(2\pi)^{-s}\Gamma(s)$.
We will now present some key results of this kind, with emphasis on features
that have not appeared earlier in our treatise.

We begin with the case of Dirichlet characters $\chi$. We recall that the gamma
factor appearing in the functional equation for
$L(s,\chi)$ differs depending on whether $\chi$ is even or odd, i.e., on whether  $\chi(-n)=\chi(n)$  or $\chi(-n)=-\chi(n)$. This leads to a principal difference between the respective Fourier interpolation bases, and it is natural to treat the two cases separately. In either case, however, we will need the following analogue of Lemma~\ref{lem:Ind}.
\begin{lemma} \label{lem:Ind2} Let $\chi$ be a primitive Dirichlet character. There 
exists a positive constant \change{$c=c_{\chi}$} such that
\begin{equation} \label{eq:Lchar} \min_{T\le t \le 2T} \max_{1/2\le \sigma \le 2} |L(\sigma+it,\chi)L(\sigma+it, \overline{\chi})|^{-1} \le \exp\big(c(\log\log T)^2\big) \end{equation}
holds for all sufficiently large $T$.
\end{lemma}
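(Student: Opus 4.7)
My plan is to adapt the argument of Ramachandra and Sankaranarayanan~\cite{RS} (Theorem~2), which was cited to prove Lemma~\ref{lem:Ind}, replacing $\zeta(s)$ with the product $F(s)\coloneqq L(s,\chi)L(s,\overline{\chi})$. This substitution is natural because $F$ shares the three structural features that drive the argument in~\cite{RS}: it is entire of order one (using that $\chi$ is primitive and non-trivial), its Dirichlet coefficients $\sum_{ab=n}\chi(a)\overline{\chi}(b)$ are non-negative, and the completed function $q^{s}\Gamma_{\RR}(s+a)\Gamma_{\RR}(s+b)F(s)$ satisfies the self-dual functional equation $\xi(1-s)=\xi(s)$ for suitable $a,b\in\{0,1\}$.

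Granted these inputs, I would organise the proof in three preparatory steps. First, I would control $F$ on the edges of the strip $1/2\le\sigma\le 2$: on $\re s=2$, absolute convergence gives $|F(2+it)|\asymp 1$ uniformly in~$t$, while on $\re s=1/2$ the functional equation combined with Stirling's formula yields the convexity bound $|F(1/2+it)|\ll(q|t|)^{A}$. Second, I would invoke the Riemann--von Mangoldt formula for $L(s,\chi)$, which shows that the number of zeros of $F$ with imaginary part in $[T-1,T+1]$ is $O(\log qT)$. Third, I would use the Hadamard factorisation of $F$ to derive, for $s=\sigma+it$ in the strip,
\[ -\log|F(s)|\ll\log(qT)+\sum_{|\gamma-t|\le 1}\log\frac{1}{|s-\rho|}, \]
where $\rho=\beta+i\gamma$ runs through the zeros of $F$ near height~$t$.

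The heart of the argument will be a pigeonhole selection of $t_{0}\in[T,2T]$ for which the right-hand side above is small uniformly in $\sigma$. I would estimate the measure of those $t\in[T,2T]$ for which the inner sum exceeds $V$ and aim for a bound of the form $\ll T\exp(-c\sqrt{V}/\log\log T)$; for $V\asymp(\log\log T)^{2}$ this forces the exceptional set to have measure less than $T/2$. Any $t_{0}$ outside this set would then satisfy $\max_{1/2\le\sigma\le 2}|F(\sigma+it_{0})|^{-1}\le\exp(c(\log\log T)^{2})$, as required.

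The main obstacle, exactly as in~\cite{RS}, is the uniformity in~$\sigma$: mean-square control of $F^{-1}$ on the critical line must be transferred to pointwise control throughout the strip. I would handle this by applying Hadamard's three-circles theorem to $F(s)$ divided by the local product $\prod_{|\gamma-t_{0}|\le 1}(s-\rho)$ over the $O(\log qT)$ nearby zeros, combined with the Borel--Carath\'eodory theorem and the convexity bound above. All dependencies on the conductor $q$ should be benign and can be absorbed into the constant~$c$, so no genuinely new ingredients beyond those in~\cite{RS} are needed.
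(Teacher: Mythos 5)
Your overall strategy — take $F(s)=L(s,\chi)L(s,\overline{\chi})$ and rerun the argument of Ramachandra--Sankaranarayanan — is exactly what the paper does; its proof consists of the single remark that the proof of \cite[Thm.~2]{RS} goes through word for word with $\zeta(s)$ replaced by this product. The problem is that your reconstruction of what that argument is does not actually deliver the bound $\exp\big(c(\log\log T)^2\big)$. The inequality $-\log|F(s)|\ll\log(qT)+\sum_{|\gamma-t|\le1}\log\frac{1}{|s-\rho|}$ carries an additive error of size $\log(qT)$, which alone already caps this route at a bound of the shape $|F(\sigma+it)|^{-1}\le T^{O(1)}$, far weaker than the target. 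Worse, the pigeonhole step cannot be repaired by choosing $t_0$ well: every interval $[t-1,t+1]$ at height $T$ contains $\asymp\log(qT)$ zeros of $F$, of which $\asymp\log(qT)$ lie within distance $1/2$ of $t$ and each contribute at least $\log 2$ to the sum, so $\sum_{|\gamma-t|\le1}\log\frac{1}{|t-\gamma|}\gg\log T$ for \emph{every} $t\in[T,2T]$. Hence the exceptional-set estimate you aim for, $\ll T\exp(-c\sqrt{V}/\log\log T)$ with $V\asymp(\log\log T)^2$, is false — the ``exceptional'' set is all of $[T,2T]$ — and no $t_0$ with the claimed property exists by this mechanism. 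What your sketch reproduces is precisely the classical Jensen/Borel--Carath\'eodory fallback that the authors describe in Section~\ref{subsect:other}, which only yields bounds like $T^{c\log\log T}$ and which they explicitly contrast with the much stronger Lemma~\ref{lem:Ind}; they also note there that the $(\log\log T)^2$ quality of the RS bound hinges on the multiplicative (Euler product) structure, an ingredient your argument never uses.

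Two smaller points. The claim that the Dirichlet coefficients of $L(s,\chi)L(s,\overline{\chi})$ are non-negative is wrong: at a prime $p$ the coefficient is $\chi(p)+\overline{\chi}(p)=2\re\chi(p)$, which can be negative. The positivity that is actually available in this setting is $2+\chi(n)+\overline{\chi}(n)\ge0$ (i.e.\ for $\zeta(s)^2L(s,\chi)L(s,\overline{\chi})$), or simply the domination of the coefficients of $-F'/F$ by $2\Lambda(n)$; one of these, together with the Euler product, is what lets the RS argument transfer verbatim, and this is the sense in which the paper's one-line proof should be read. So while your opening move matches the paper, the substance of the adaptation — the part that must produce $(\log\log T)^2$ rather than a power of $T$ — is missing.
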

The proof is word for word the same as that in \cite{RS}, with $\zeta(s)$ replaced by $L(s,\chi)L(s,\overline{\chi})$. We could prove the same bound for $\min_{T\le t \le 2T} \max_{1/2\le \sigma \le 2} |L(\sigma+it,\chi)|^{-1}$, which would suffice when $\chi$ is real. In the complex case, however, \eqref{eq:Lchar} is useful because we integrate along segments that cross the critical strip. Invoking the functional equation for $L(s,\chi)$, we then need lower bounds for both $|L(s,\chi)|$ and $|L(s,\overline{\chi})|$ along such segments.

\subsection{Fourier interpolation bases associated with even primitive characters} Theorem~\ref{thm:zeta} only deals with even functions, but the result extends painlessly to arbitrary functions in $\mathcal{H}_1$, as there is a similar interpolation formula for odd functions. In the case of complex characters, it is less natural to decompose the interpolation formula into even and odd parts. We therefore take the opportunity to state and prove in one stroke an interpolation formula for arbitrary functions.

\begin{theorem}\label{thm:Leven}
Let $\chi$ be a primitive even Dirichlet character to the modulus $q$ for some $q\ge 2$. There exist two sequences of rapidly decaying entire functions $U_n(z)$, $n \in \mathbb{Z}$, and $V_{\rho,j}(z)$, $0\le j <m(\rho)$, with $\rho$ ranging over the nontrivial zeros of $L(\chi, s)$, such that for every function $f$ in $\mathcal{H}_1$ and every $z=x+iy$ in the strip $|y|<1/2$ we have
	\begin{align}
	\nonumber
	f(z) \!=\! & \lim_{N\to \infty} \sum_{0<|n|\le N} \widehat{f}\left( \frac{\sgn (n)(\log |n| +\log q)}{4\pi}\right) U_n(z)\! +
	\! \left(\frac{f(-i/2)}{L^*(0,\chi)}+\frac{f(i/2)}{L^*(1,\chi)}\right)U_0(z) \\ &\label{eq:addLeven}
     +\lim_{k\to \infty} \sum_{|\gamma|\le T_k}
	\sum_{j=0}^{m(\rho)-1} f^{(j)}\left(\frac{\rho-1/2}{i}\right) V_{\rho,j}(z)
	\end{align}
for some increasing sequence of positive numbers $T_1, T_2, ...$ tending to $\infty$ that
\change{is independent of~$f$ and~$z$}. Moreover, the functions $U_n(z)$ and
$V_{\rho,j}(z)$ enjoy the following interpolatory properties:
\begin{equation} \label{eq:interpeven} \begin{array}{rclrcl} U_n^{(j)}\left(\frac{\rho-1/2}{i}\right) & = & 0,   & \widehat{U}_n\left(\frac{\sgn(n')(\log |n'|+\log q)}{4\pi}\right)& =& \delta_{n,n'} ,\\
V^{(j')}_{\rho,j}\left(\frac{\rho'-1/2}{i}\right)& = & \delta_{(\rho,j),(\rho',j')},  & \widehat{V}_{\rho,j}\left(\frac{\sgn (n)(\log |n|+\log q)}{4\pi}\right)& = &0,
\end{array}  \end{equation}
with $\rho, \rho'$ ranging over the nontrivial zeros of $L(s,\chi)$, $j,j'$ over all nonnegative integers less than or equal to respectively $m(\rho)-1, m(\rho')-1$, and $n, n'$ are in $\mathbb{Z}\sm \{0\}$.
\end{theorem}

The distinguished function $U_0(z)$ satisfies $U_0(0)=U_0(1)=1/2$ as well as
\begin{equation} \label{eq:Int0} U_0^{(j)}\left(\frac{\rho-1/2}{i}\right)  =  0 \quad \text{and}  \quad \widehat{U}_0\left(\frac{\sgn(n)(\log |n|+\log q)}{4\pi}\right) = 0 \end{equation}
when, as above, $\rho$ ranges over the nontrivial zeros of $L(s,\chi)$, $j$ over all nonnegative integers less than or equal to $m(\rho)-1$, and $n$ over all integers different from  $0$. The formula takes however a more involved and interesting form at the special points $0$ and $1$ as will be exhibited after we have established the theorem. As in the proof of Theorem~\ref{thm:zeta}, we will resort to the change of variable $z=(s-1/2)/i$ and use Mellin instead of Fourier transform. Most of the proof follows the same lines as that of Theorem~\ref{thm:zeta}, and we therefore omit some of the computations.
\begin{proof}[Proof sketch of Theorem~\ref{thm:Leven}]
We set
	\[ H_\delta (w,s;\chi) \coloneqq
	\frac{L^{*}(s,\chi)}{2L^{*}(w,\chi)} \Aa_{1/2}^{\delta}(w/2,s/2),\]
where $L^{*}(s,\chi)=q^{s/2}\Gamma_{\RR}(s)L(s,\chi)$.
These kernels satisfy the functional equations
	\begin{equation} \label{eq:fuL}  H_{\delta}(w,s;\chi)= \delta w(\chi) H_{\delta}(1-w,s;\overline{\chi}) ,\end{equation}
where $w(\chi)$ is the root number of $L(s,\chi)$.
We now consider the operator
	\begin{align} \label{eq:Tchi} (T_\chi F)(s) &  : =  \frac{1}{4\pi i} \int_{c-i\infty}^{c+i\infty} \left[H_{+}(w,s;\chi) F(w) + w(\chi)H_{+}(1-w,s;\overline{\chi}) F(1-w)\right] dw \\ \nonumber
	& \ + \frac{1}{4\pi i} \int_{c-i\infty}^{c+i\infty} \left[H_{-}(w,s;\chi) F(w) - w(\chi) H_{-}(1-w,s;\overline{\chi}) F(1-w)\right] dw ,\end{align}
where both $1-c < \re s < c$ and $c>1$, with the additional assumption
that $F(s)$ is analytic in $1-c\le \re s
	\le c$ and
	\[ \sup_{1-c\le \sigma \le c} \int_{\infty}^{\infty} |F(\sigma+it)| (1 +|t|)dt<\infty . \]
Following the corresponding computation in Section~\ref{sec:maintheoremproof},
we may compute the integrals on the right-hand side of \eqref{eq:Tchi} and get
	\begin{align*} (T_{\chi}F)&(s) =\frac{1}{4}L^{*}(s,\chi) \sum_{n=1}^{\infty}(\mathcal{M}^{-1} F)((qn)^{1/2}) \sum_{d^2|n} \mu(d)\chi(d) (\alpha^+_{n/d^2} (s) +\alpha_{n/d^2}^{-}(s) ) \\
	& + \frac{1}{4} L^{*}(s,\chi) w(\chi) \sum_{n=1}^{\infty}(\mathcal{M}^{-1} F)((qn)^{-1/2})(qn)^{-1/2} \sum_{d^2|n} \mu(d)\overline{\chi}(d) (\alpha^-_{n/d^2} (s) -\alpha_{n/d^2}^{+}(s) ), \end{align*}
where we have set $\alpha_{n}^{\pm}(s)\coloneqq \alpha_{n,1/2}^{\pm}(s/2)$.
On the other hand, viewing the two integrals on the right-hand side of \eqref{eq:Tchi} as contour integrals, we may use the functional equations \eqref{eq:fuL} and the residue theorem to deduce that
	\begin{align*} (T_{\chi}F)(s)&= f(s)-\alpha_0^{-}(s) q^{s/2} L(s,\chi)\left(\frac{F(0)}{L^*(0,\chi)}+\frac{F(1)}{L^*(1,\chi)}\right) \\
	& + \frac{1}{2}\lim_{k\to\infty} \sum_{|\gamma|\le T_k} \big(\operatorname{Res}_{w=\rho} H_+(w,s;\chi)F(w)+\operatorname{Res}_{w=\rho}H_-(w,s;\chi)F(w)\big)
	\end{align*}
for some sequence $T_k\to \infty$. To achieve this, we use the sub-convexity bound $L(1/2+it)\ll q^{1/2} |t|^{1/6} \log (q|t|) $ (see \cite[p.~149]{HB}), along with Lemma~\ref{lem:Dest} and Lemma~\ref{lem:Ind2}. We arrive at \eqref{eq:addLeven} by equating our two expressions for $(T_{\chi}F)(s)$ and changing back to Fourier transforms and the variable $z=(s-1/2)/i$.
\end{proof}
To evaluate \eqref{eq:addLeven} at $s=1$, we recall from~\eqref{eq:alplus} that
	\[\Aa_{1/2}^{+}(w/2,1/2)=-2\zeta^{*}(w). \]
Plugging this into \eqref{eq:addLeven}, we arrive at
	\begin{align}
	\nonumber f(-i/2)+\frac{L^*(1,\chi)}{L^*(0,\chi)}f(i/2) & = q^{1/2} L(1,\chi) \sum_{n=1}^\infty (qn)^{-1/2} \widehat{f}\Big(\frac{\log n + \frac{1}{2} \log q}{2\pi}\Big)
	\prod_{p|n} (1-\chi(p)) \\
	\label{eq:fonehalf}
	& - q^{1/2} L(1,\chi)w(\chi) \sum_{n=1}^\infty (qn)^{-1/2} \widehat{f}\Big(\frac{-\log n - \frac{1}{2} \log q}{2\pi}\Big)
	\prod_{p|n} (1-\overline{\chi}(p)) \\
	& \nonumber +q^{1/2} L(1,\chi) \lim_{k\to\infty} \sum_{|\gamma|\le T_k} \operatorname{Res}_{w=\rho} \frac{\zeta(w)}{q^{w/2} L(w,\chi)} f(w),
	\end{align}
which has a curious resemblance with the Riemann--Weil Formula.
%

\subsection{Fourier interpolation bases associated with odd characters}

In this case of odd characters, the gamma factor for $L(s,\chi)$
is $\pi^{-(s+1)/2} \change{\Gamma((s+1)/2)}$, and thus we will use
the function $\Aa_{3/2}^{\pm}(\frac{w+1}{2},\frac{s+1}{2})$
which involves the same gamma factor. Note that
the abscissa of convergence and the abscissa of absolute convergence
of $w\mapsto \Aa_{3/2}^{\pm}(\frac{w+1}{2},\frac{s+1}{2})$
are both equal to $2$, and we therefore need to require that functions be
analytic in a strip of width $3+\eps$. As in the preceding cases, we
need a growth condition in the strip, but we may require less at the boundary
of the strip because the Dirichlet series kernel converges absolutely there.
We find it convenient to restrict to functions~$f$ that are analytic in
the strip $|\im\, z|<3/2+\eps$ and satisfy
	\[ |f(x+iy)|\ll (1+|x|)^{-1-\eps} \]
for some $\eps>0$, where we in the latter inequality assume
that $|y|\le (3+\eps)/2$. We let~$\mathcal{H}_2$ denote the space
of all such functions.

\begin{theorem}\label{thm:Lodd}
Let $\chi$ be a primitive odd Dirichlet character to the modulus $q$ for some $q\ge 3$. There exist two sequences of rapidly decaying entire functions $U_n(z)$, $n \in \mathbb{Z}$, and $V_{\rho,j}(z)$, $0\le j < m(\rho)$, with $\rho$ ranging over the nontrivial zeros of $L(\chi, s)$, such that for every function $f$ in $\mathcal{H}_2$ and every $z=x+iy$ in the strip $|y|<1/2$ we have
	\begin{align} \nonumber
	f(z)\!=\! & \lim_{N\to \infty} \sum_{0<|n|\le N} \widehat{f}\left( \frac{\sgn (n)(\log |n| +\log q)}{4\pi}\right)\! U_n(z)+\left(\frac{f(-3i/2)}{L^*(0,\chi)}\!+\!\frac{f(3i/2)}{L^*(1,\chi)}\right)\! U_0(z) \\ &+ \lim_{k\to \infty} \sum_{0<|\gamma|\le T_k}
	\sum_{j=0}^{m(\rho)-1} f^{(j)}\left(\frac{\rho-1/2}{i}\right) V_{\rho,j}(z)
	\label{eq:addodd}  \end{align}
for some increasing sequence of positive numbers $T_1, T_2, ...$ tending to $\infty$ that does not depend on neither $f$ nor on $z$. Moreover, the functions $U_n(z)$ and $V_{\rho,j}(z)$ enjoy the following interpolatory properties:
\begin{equation} \label{eq:interp2} \begin{array}{rclrcl} U_n^{(j)}\left(\frac{\rho-1/2}{i}\right) & = & 0,   & \widehat{U}_n\left(\frac{\sgn(n')(\log |n'|+\log q)}{4\pi}\right)& =& \delta_{n,n'} ,\\
V^{(j')}_{\rho,j}\left(\frac{\rho'-1/2}{i}\right)& = & \delta_{(\rho,j),(\rho',j')},  & \widehat{V}_{\rho,j}\left(\frac{\sgn (n)(\log |n|+\log q)}{4\pi}\right)& = &0,
\end{array}  \end{equation}
with $\rho, \rho'$ ranging over the nontrivial zeros of $\zeta(s)$, $j,j'$ over all nonnegative integers less than or equal to respectively $m(\rho)-1, m(\rho')-1$, and $n, n'$ are in $\mathbb{Z}\sm \{0\}$.
\end{theorem}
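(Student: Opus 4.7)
The proof proceeds exactly along the lines of Theorem~\ref{thm:Leven}, but with the weight-$1/2$ kernel replaced by the weight-$3/2$ kernel, so as to match the gamma factor of $L(s,\chi)$ for odd $\chi$. I would define
\[
H_{\delta}(w,s;\chi) \coloneqq \frac{L^{*}(s,\chi)}{2 L^{*}(w,\chi)}\,\Aa_{3/2}^{\delta}\Big(\frac{w+1}{2},\frac{s+1}{2}\Big),\qquad \delta\in\{+,-\},
\]
where $L^{*}(s,\chi)=q^{s/2}\Gamma_{\RR}(s+1)L(s,\chi)$. Combining the symmetries~\eqref{eq:dirkerfeq} of $\Aa_{3/2}^{\delta}$ (under the affine substitutions $w'=(w+1)/2$, $s'=(s+1)/2$, which turn $w\mapsto1-w$ into $w'\mapsto 3/2-w'=k-w'$) with $L^{*}(1-s,\chi)=w(\chi)L^{*}(s,\overline{\chi})$ yields the analogue of~\eqref{eq:fuL}, namely $H_{\delta}(w,s;\chi)=\delta\,w(\chi)H_{\delta}(1-w,s;\overline{\chi})$. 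As a Dirichlet series in~$w$, the kernel converges absolutely for $\re w>2$ (which is precisely why one requires $f\in\mathcal{H}_{2}$, i.e., analyticity in a strip of width $3+\eps$), and $w\mapsto H_{\delta}(w,s;\chi)$ extends meromorphically with simple poles at $w=s$ and $w=1-s$, poles of order $m(\rho)$ at each nontrivial zero~$\rho$ of $L(\cdot,\chi)$, and two extra simple poles at $w=-1$ and $w=2$ arising from the constant term $\alpha_{0,3/2}^{-}$ of $\Aa_{3/2}^{-}$ at $w'=0,3/2$; these are not cancelled by $L^{*}(w,\chi)^{-1}$ since $L^{*}(\pm1,\chi)$ and $L^{*}(2,\chi)$ are finite and nonzero for primitive odd~$\chi$.

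The key step is to compute the operator
\[
(T_{\chi}F)(s)\coloneqq\frac{1}{4\pi i}\int_{c-i\infty}^{c+i\infty}\sum_{\delta=\pm}\bigl[H_{\delta}(w,s;\chi)F(w)+\delta\,w(\chi)H_{\delta}(1-w,s;\overline{\chi})F(1-w)\bigr]\,dw,
\]
with $c>2$ and $F(s)=f((s-1/2)/i)$, in two different ways. Termwise integration of the Dirichlet series on the line $\re w=c$ reproduces, after a M\"obius inversion analogous to~\eqref{eq:hnmoebius}, a sum indexed by $n\in\ZZ\sm\{0\}$ of the form $\widehat{f}(\sgn(n)(\log|n|+\log q)/(4\pi))U_{n}(z)$, which is the first series in~\eqref{eq:addodd}. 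Shifting the first contour to $\re w=1-c<-1$ (permissible because $f\in\mathcal{H}_{2}$) and using the functional equation for $H_{\delta}$ to match the shifted first integral against the second integral, one picks up the residues of $H_{\delta}(w,s;\chi)F(w)$ inside the strip $1-c<\re w<c$: the value $F(s)$ from $w=s$, the sum $\sum_{j}f^{(j)}((\rho-1/2)/i)V_{\rho,j}(z)$ over the nontrivial zeros (with $V_{\rho,j}$ given by the weight-$3/2$ analogue of~\eqref{eq:basisvrho2}), and the two boundary contributions at $w=-1,2$, which under $z=(w-1/2)/i$ become exactly $(L^{*}(0,\chi)^{-1}f(-3i/2)+L^{*}(1,\chi)^{-1}f(3i/2))\,U_{0}(z)$.

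To make the sum over~$\rho$ convergent I would pick $T_{k}\in[2^{k},2^{k+1}]$ using Lemma~\ref{lem:Ind2} (stated for the product $L(s,\chi)L(s,\overline{\chi})$ precisely because the shifted contour crosses the critical strips of both functions), combined with the subconvexity bound $L(1/2+it,\chi)\ll q^{1/2}|t|^{1/6}\log(q|t|)$ and the weight-$3/2$ analogue of Lemma~\ref{lem:Dest}. The interpolatory identities~\eqref{eq:interp2} and the rapid decay of $U_{n}$ and $V_{\rho,j}$ follow \emph{mutatis mutandis} from the proof of Theorem~\ref{thm:zeta}, with the odd Fourier eigenfunctions $d_{n}^{\pm}$ and their $\sqrt{m}$-interpolation property~\eqref{eq:sqrtninterpolation} playing the role of the even $b_{n}^{\pm}$, and with Proposition~\ref{prop:rapiddecay} supplying the rapid decay. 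The main technical obstacle is the weight-$3/2$ version of Lemma~\ref{lem:Dest}: the Phragm\'en--Lindel\"of step is unchanged, but one needs the square-sum estimate $\sum_{n\le x}|\alpha_{n,3/2}^{\pm}(s)|^{2}\ll_{s}x^{2}\log^{2}x$ (the case $k=3/2$ of Proposition~\ref{prop:squaresum}), together with a verification that the twist by $\chi(d)$ in the M\"obius convolution does not degrade this exponent.
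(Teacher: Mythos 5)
Your proposal takes essentially the same route as the paper: the paper's entire proof consists of defining exactly the kernel $H_\delta(w,s;\chi)=\frac{L^{*}(s,\chi)}{2L^{*}(w,\chi)}\Aa_{3/2}^{\delta}\bigl(\frac{w+1}{2},\frac{s+1}{2}\bigr)$ and invoking the argument of Theorem~\ref{thm:Leven}, which is precisely what you do and then elaborate. The details you fill in (abscissa of absolute convergence $2$ forcing $f\in\mathcal{H}_2$, the extra poles at $w=-1,2$ coming from $\alpha_{0,3/2}^{-}$, the $k=3/2$ case of Proposition~\ref{prop:squaresum} in the analogue of Lemma~\ref{lem:Dest}, Lemma~\ref{lem:Ind2} with the subconvexity bound for choosing $T_k$, and the odd eigenfunctions $d_n^{\pm}$ with Proposition~\ref{prop:rapiddecay} for the interpolatory properties and rapid decay) are all consistent with the paper's framework.
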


As in the case of even characters, the distinguished function $U_0(z)$ satisfies \eqref{eq:Int0}, and we also have $U_0(\pm 3i/2)=1/2$. The formula at the special points $\pm 3i/2$ will be somewhat more complicated than~\eqref{eq:fonehalf} and will instead of $\zeta(w)$ involve the Dirichlet
series
	\[-\sum_{n=1}^{\infty} r_3(n) n^{-(w+1)/2}, \]
where $r_3(n)$ is the number of representations of $n$ as the sum
of squares of~$3$ integers.

\begin{proof}
We define the Dirichlet series kernel
	\[ H_\delta (w,s;\chi) \coloneqq
	\frac{L^{*}(s,\chi)}{2L^{*}(w,\chi)}
	\Aa_{3/2}^{\delta}\Big(\frac{w+1}{2},\frac{s+1}{2}\Big)\]
and follow the same argument as in Theorem~\ref{thm:Leven}.
\end{proof}

\subsection{Fourier interpolation with zeros of other Dirichlet series}
\label{subsect:other}
We may deduce an abundance of further Fourier interpolation formulas based on the
techniques developed in Section~\ref{sec:zetakernels}, as will now be briefly
\change{explained. Detailed analysis of these generalizations falls outside the
scope of this paper, so we only sketchily outline the details, and in}
each case, we will only indicate the construction of the corresponding Dirichlet series
kernels $H_{\pm}(w,s)$ that lead to the interpolation formula with $\Lambda$
being the (multi-)set given by a suitable rotation of the nontrivial zeros of $L(s)$.

\subsubsection{\change{Dedekind zeta functions of imaginary quadratic fields and Hecke
$L$-functions of modular forms}}
First, we obtain Fourier interpolation formulas
associated with zeros of Dedekind zeta functions~$\zeta_K(s)$ for
imaginary quadratic fields~$K$. In this case we define
	\[H_{\pm}(w,s) \coloneqq \frac{\zeta_{K}^{*}(s)}{\zeta_{K}^{*}(w)}
	\Aa_{1}^{\pm}(w,s),\]
where $\zeta_K^{*}(s)=|\Delta_K|^{s/2}\Gamma_{\CC}(s)\zeta_K(s)$
is the completed zeta function and $\Gamma_{\CC}(s)=2(2\pi)^{-s}\Gamma(s)$.
More generally, this construction applies to products of two
Dirichlet $L$-functions whose (primitive) characters have different parity,
i.e., $L(s)=L(s,\chi_1)L(s,\chi_2)$ where $\chi_1$ is an even character
and $\chi_2$ is an odd character. In this case the corresponding sequence
of points on the Fourier side is $\Lambda^*=\{\pm\log (2n\sqrt{N})/(2\pi)\}$,
where $N$ is the conductor, i.e., $N=|\Delta_K|$ for $L(s)=\zeta_K(s)$
and $N=q_1q_2$ for $L(s)=L(s,\chi_1)L(s,\chi_2)$.

Next, using $\Aa_k^{\pm}(w,s)$ for an even integer $k\ge 2$
we can construct Fourier interpolation formulas associated to zeros of Hecke
$L$-functions of modular forms. More precisely, let {\color{blue} $f$ in $S_k(\Gamma_{0}(N))$
be} a normalized Hecke newform. Then $L(s,f)=\sum_{n\ge 1}a_nn^{-s}$ admits
an analytic continuation to~$\CC$ and satisfies $L^{*}(k-s,f)=L^{*}(s,f)$,
where $L^{*}(s,f)=N^{s/2}\Gamma_{\CC}(s)L(f,s)$ is the completed $L$-function.
In this case, we define $H_{\pm}(w,s)$ as
	\[H_{\pm}(w,s) \coloneqq \frac{L^{*}(s,f)}{L^{*}(w,f)}
	\Aa_{k}^{\pm}(w,s).\]
The formula again involves the sequence
$\Lambda^*=\{\pm\log (2n\sqrt{N})/(2\pi)\}$.

\subsubsection{\change{Dirichlet series without Euler products}}
Furthermore, we may obtain a continuous family of Fourier interpolation formulas associated with the sequence of points $\Lambda^*=\{0, \pm (\log n)/(4\pi), n\ge 1\}$ by starting from kernels of the form
	\begin{equation} \label{eq:Hgen}
	H^{\pm}_{k,\eps, s_0}(w,s)\coloneqq \frac{\Aa^{\eps}_{k}(s/2,s_0)}
	{\Aa^{\eps}_{k}(w/2,s_0)}
	\Aa^{\pm \eps}_{k}(w/2,s),
	\end{equation}
where $s_0$ is some fixed point satisfying $0\le \re s_0 \le k$.
The (multi-)set $\Lambda$ dual to $\Lambda^*$ will now be a suitable rotation
and translation of the zero set of $\Aa^{\eps}_{k}(w/2,s_0)$.

We may however need to put more severe restrictions on the functions represented by Fourier interpolation formulas associated with kernels such as those defined by \eqref{eq:Hgen}. Indeed, since the denominator of \eqref{eq:Hgen} in general can not be represented as an Euler product, the techniques from \cite{RS} must be abandoned. In the absence of a multiplicative structure, we may resort to the following classical argument, yielding a  much cruder bound than that of Lemma~\ref{lem:Ind}. We use then that the function $A(w)\coloneqq\Aa^{\eps}_{k}(w/2,s_0)$  grows polynomially in the vertical direction, and hence, by Jensen's formula, the number of zeros in a strip of width $1$ at height $T$ is $O(\log T)$. An application of the Borel--Carath\'{e}odory theorem shows that
	\[ \frac{|A(w)|}{\prod_{|\gamma-T|\le 1} |s-\rho|} \ge T^{-C} \]
for some constant $C$ when $|\im w-T| \le 1/2$. Hence we can find an $\eta$, $|\eta|\le1/ 2$, such that
	\begin{equation} \label{eq:weak}
	\int_{1-\sigma_0}^{\sigma_0} |F(\sigma+i(T+\eta)|^{-1} d\sigma
	\le T^C \int_{1-\sigma_0}^{\sigma_0}
	\prod_{|\gamma-T|\le 1} |\sigma+i(T+\eta)-\rho)|^{-1} d\sigma
	\ll T^{c\log\log T}
	\end{equation}
for some constant $c$. The bound in \eqref{eq:weak} is our replacement for Lemma~\ref{lem:Ind}, and, consequently, we need to require that functions decay accordingly.

The individual basis functions arising from kernels of the form \eqref{eq:Hgen} will have additional poles at $s=2s_0$ and $s=2(k-s_0)$, and hence they do not belong to any nice function space. The situation is particularly bad in the most natural case when $s_0$ is on the line of symmetry $\re s=k/2$. Then the inverse Fourier transform of any basis function is neither integrable nor in any $L^p$ space for $p<\infty$. This is a less attractive feature of ``basis decompositions'' stemming from \eqref{eq:Hgen}.

\subsubsection{\change{Further extensions and more complicated gamma factors}}
Further extensions are obtained if we apply algebraic operations that preserve functional equations in a suitable way. We may for instance take linear combinations of $L$-functions that satisfy the same functional equation. Moreover, we may, for an arbitrary polynomial $P$ of degree $n$, multiply for example a Dirichlet $L$-function by $r^{ns/2} P(r^{-(s-1/2)})$ with $r>1$ an integer and $P(0)\neq 0$. Then the polynomial
	\[ Q(z)=z^n P(1/z) \]
will appear in the functional equation, where $n$ is the degree of $P$. In other words, \emph{any} complex arithmetic progression with common difference $2\pi /\log r$ may be adjoined to a given multi-set $\Lambda$ stemming from the nontrivial zeros of an $L$-function to which our methods apply. This allows us, in particular, to establish a Fourier interpolation formula associated with every Dirichlet $L$-function $L(s,\chi)$, irrespective of whether $\chi$ is primitive or not. As should be clear from the preceding two subsections, by adjoining such an arithmetic progression, we will find that both the negative and positive part of the sequence $\Lambda^*$ are ``pushed away'' by $\log r/(4\pi)$ from the origin.


We can also treat Dirichlet series with more complicated gamma factors,
although in this case the results are less satisfactory in the sense
that while we get a Fourier interpolation formula, in general the resulting
functions $U_n$ and $V_{\rho,j}$ will no longer form a basis, and
we will not get the interpolatory properties like in~\eqref{eq:interp}.
For example, let $L(s)$ satisfy $L^{*}(1-s)=L^{*}(s)$ where
$L^{*}(s)=N^{s/2}\Gamma_{\RR}^{r_1}(s)\Gamma_{\CC}^{r_2}(s)L(s)$,
and let $L_0$ be another Dirichlet series such
that $L_0^{*}(s)=N_0^{s/2}\Gamma_{\RR}^{r_1-1}(s)\Gamma_{\CC}^{r_2}(s)\change{L_0}(s)$.
Then we can form a Dirichlet series kernel
	\[H_{\pm}(w,s) \coloneqq \frac{L^{*}(s)}{L^{*}(w)}
	L_0^{*}(w)\Aa_{1/2}^{\pm}(w/2,s/2),\]
which has the expected poles and leads to a Fourier interpolation formula
with zeros of~$L(s)$
and $\Lambda^*=\{0, \pm \frac{\log(nN/N_0)}{4\pi}, n\ge 1\}$.


\subsubsection{\change{Density of interpolation nodes}}
Let us finally note that Kulikov's inequality for $\Lambda$ and $\Lambda^*$ (see
\eqref{eq:TF}) will hold in the same marginal way as when $\Lambda$ consists of the zeros
of $\zeta(s)$, whenever these sequences (or multisets) are constructed as indicated above.
Indeed, we may in all \change{the above} cases establish a Riemann--von Mangoldt formula.
We may for instance observe that the number of nontrivial zeros $\rho=\beta+i\gamma$ of
$A(w)$ satisfying $|\gamma|\le T$ will be
	\[ \label{eq:RM} \frac{T}{\pi} \log \frac{T}{2\pi e} +O(\log T).\]
We follow the standard approach to prove this, i.e., we apply the argument principle along with the functional equation, and we use the Hadamard product of $A(w)$ to control its logarithmic derivative. If we adjoin arithmetic progressions to $\Lambda$ by multiplying, say, a Dirichlet $L$-function by $r^{ns/2}P(r^{-(s-1/2)})$, then there will be an additional term
	\[ \frac{nT}{\pi} \log r  \]
in the Riemann--von Mangoldt formula, balancing the ``repulsion'' by $n\log r/(4\pi)$ of the sequence $\Lambda^*$ from the origin.

\section{Estimates for the Fourier coefficients of $F_{k}^{\pm}(\tau,s)$}
\label{sec:estimates}
In this section we will derive estimates for the growth
of $\alpha_{n,k}^{\pm}(s)$ and certain related quantities as
a function of $n$. As before, we assume that $k\ge0$.

First, it will be convenient to define two quantities related
to $\gamma_{\tau}$, which we recall is the (generically unique)
element of $\Gamma_{\theta}$ that maps $\tau\in\HH$ to the
fundamental domain~$\bF$. The first quantity is $\bH(\tau)$,
the imaginary part of $\gamma_{\tau}\tau$, i.e.,
	\[\bH(\tau) \,\change{:=}\, \im\gamma_{\tau}\tau .\]
It is easy to see that the function $\bH\colon\HH\to\RR$ is continuous
and $\Gamma_{\theta}$-invariant.
The second quantity is $\bN\colon\HH\to\ZZ_{\ge0}$, which we define as one plus
the number of inversions~$S$ that appear in the canonical representation
of $\gamma_{\tau}$, i.e.,
	\[\bN(\tau) \,\change{:=}\,  j+\eps_0+\eps_1  ,\qquad  \gamma_{\tau}=
	S^{\eps_0}T^{2m_1}ST^{2m_2}\dots ST^{2m_j}S^{\eps_1}.\]
In cases when $\gamma_{\tau}$ is not uniquely defined, i.e.,
for $\tau\in\Gamma_{\theta}\partial\bF$, we set $\bN(\tau)$ to be the larger
\change{of the two possible values}.

\subsection{Estimates for the contour integral}
Our first step is to show that $F_k^{\pm}(\tau,s)$ is bounded
for $\tau\in\bF$. We prove this for the slightly more general functions
$F_{k}^{\pm}(\tau,\phi)$ from Theorem~\ref{thm:modint}, as long as $\phi$ is
bounded on the domain~$\mathcal{D}$ illustrated in Figure~\ref{fig:crescent}.
Explicitly we set
\[ \mathcal{D}\coloneqq\{\tau\in \HH\colon |\re\tau|<1,\; \sqrt{3/4}<|\tau|<\sqrt{4/3},\; |\tau\pm 1/2|>1/2\};\]
the particular shape of~$\mathcal{D}$ is not important as long as it contains
the geodesic from~$-1$ to~$1$ and lies in $\ol{\bF\cup S\bF}$.

\begin{figure}[h]
	\centering
	\begin{tikzpicture}
	\definecolor{cv0}{rgb}{0.95,0.95,0.95}
	\definecolor{cv1}{rgb}{0.90,0.90,0.90}
	\clip(-9,-0.5) rectangle (7,3);
	\begin{scope}[scale=0.8,xshift=-1cm]
	\draw[lightgray] (-5,0) -- (5,0);
	\draw[lightgray,dashed] (-3,0) arc  (180:0:1.5);
	\draw[lightgray,dashed]  (3,0) arc  (0:180:1.5);
	\draw[lightgray,dashed]  (-3,0) -- (-3,3);
	\draw[lightgray,dashed]  (3,0) -- (3,3);
	\fill[color=cv1] (-3,0) -- (-3,1.7320) arc (150:30:3.4641) -- (3,0) arc (0:60:1.5) arc (30:150:2.5980) arc (120:180:1.5);
	\draw[gray] (-3,0) -- (-3,1.7320) arc (150:30:3.4641) -- (3,0) arc (0:60:1.5) arc (30:150:2.5980) arc (120:180:1.5);
	\draw[gray,dashed] (-3,0) arc  (180:0:3);
	\draw (0,2.6) node[above]{$\mathcal{D}$};

	\fill[black] (-3,0) circle (0.07) node[below] {$-1$};
	\fill[black] (3,0) circle (0.07) node[below] {$1$};


	\end{scope}
	\end{tikzpicture}
	\caption{The domain $\mathcal{D}$.}
	\label{fig:crescent}
\end{figure}
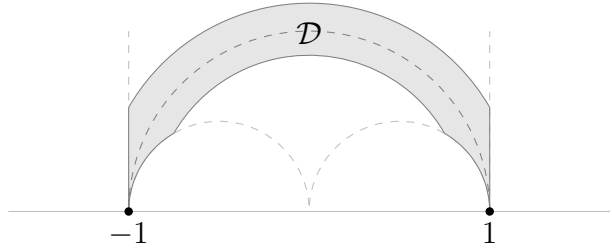

\begin{proposition} \label{prop:bdfdomain}
	Let~$k\ge0$ and $\phi\colon\HH\to\CC$ be analytic such that
	$|\phi(\tau)|\le C_{\phi}$ for $\tau$ in $\mathcal{D}$, where
	$\mathcal{D}$ is defined as above. Then for $\tau\in\bF$ we have
	\[|F_{k}^{\pm}(\tau,\phi)| \ll_k \begin{cases}
    C_{\phi},\qquad\qquad\qquad\qquad k\pm 1\not\in1+4\ZZ\,,\\
    C_{\phi}\im(\tau)^{-1},\qquad \mbox{otherwise}\,.
    \end{cases}\]
\end{proposition}
\begin{proof}
Without loss of generality we may assume that $\re\tau$ is in $(-1,0)$,
$|\tau|>1$, and that $|\tau-i|>1/10$ (for $\tau$ close to $i$ we get the claimed bound using the contour deformation from Figure~\ref{fig:contour}). We will also assume that
$\im \tau \le 1/2$ (for $\im \tau >1/2$ the claim follows from the same
argument, but with simpler estimates).
By definition of $F_k^{\pm}(\tau,\phi)$ we have
	\[F_k^{\pm}(\tau, \phi) = \frac{1}{2}\int_{-1}^{i}
	\mathcal{K}_k^{\pm}(\tau,z)(\phi(z)\mp (z/i)^{-k}\phi(-1/z))dz.\]
Set $\phi_{\pm}(z):=\frac{1}{2}(\phi(z)\mp (z/i)^{-k}\phi(-1/z))$.
Note that $\phi_{+}(i)=0$. Since~$\mathcal{D}$ is invariant
under~$z\mapsto -1/z$, by assumption we have
	\[|\phi_{\pm}(z)| \ll_{k} C_{\phi},\qquad z\in\mathcal{D}.\]

To estimate the integral, we change the variable of integration to $w=J(z)$
and deform the contour of integration in $w$ from the segment $[0,64]$ to
a curve $\ell$ in the lower half-plane (see Figure~\ref{fig:jconf})
in such a way that its preimage under $J$ in $\ol{\bF\cup S\bF}$ lies
in $\mathcal{D}$. Using~\eqref{eq:theta4dz} to make the change of variables,
we obtain
	\begin{align*}
	F_k^{-}(\tau, \phi) & =
	\frac{J_{-}(\tau)\theta^{2k}(\tau)}{J^{\nu_{-}-1}(\tau)}
	\int_{-1}^{i}\frac{J^{\nu_{-}}(z)}{\theta^{2k}(z)}
	\frac{1} {J(\tau)-J(z)} \phi_{-}(z) \theta^{4}(z)dz \\
	& = \frac{1}{\pi}\frac{J_{-}(\tau)\theta^{2k}(\tau)}{J^{\nu_{-}-1}(\tau)}
	\int_{0}^{64}\frac{1}{\theta^{2k}(t(w))}\frac{\phi_{-}(t(w))}
	{J(\tau)-w}w^{\nu_{-}-1/2}(64-w)^{-1/2}dw,
	\end{align*}
	where $t(w)$ is the inverse function to $z\mapsto J(z)$.
	Since $J(\tau)$ belongs to the upper half-plane and $w$ belongs to the lower,
	we have $|J(\tau)-w|\gg \sqrt{|J(\tau)|^2+|w|^2}$.
	From this we get
	\begin{align*}
	|F_k^{-}(\tau, \phi)|
	&\ll C_{\phi}\frac{|J_{-}(\tau)\theta^{2k}(\tau)|}{|J^{\nu_{-}-1}(\tau)|}
	\int_{\ell}\frac{|w|^{\nu_{-}-1/2}|64-w|^{-1/2}}{|\theta^{2k}(t(w))|\sqrt{|J(\tau)|^2+|w|^2}}|dw|\\
	&\ll_k C_{\phi}\frac{|J_{-}(\tau)\theta^{2k}(\tau)|}{|J^{\nu_{-}-1}(\tau)|}
	\Big(1+\int_{0}^{-ie^{-1}}\frac{|w|^{\nu_{-}-(k+2)/4}|64-w|^{-1/2}}{\log^k|w^{-1}|\sqrt{|J(\tau)|^2+|w|^2}}|dw|\Big)\\
	&\ll C_{\phi}\frac{|J_{-}(\tau)\theta^{2k}(\tau)|}{|J^{\nu_{-}-1}(\tau)|}
	\Big(1+\int_{0}^{e^{-1}}\log^{-k}(t^{-1})\frac{t^{-\{(k+2)/4\}}}{\sqrt{|J(\tau)|^2+t^2}}dt\Big).
	\end{align*}
	From this we obtain (using Lemma~\ref{lem:expint} below), for $k\not\in 2+4\ZZ$,
	\begin{equation*} \label{eq:estimate_contour_plus}
	|F_k^{-}(\tau, \phi)| \ll_k
	C_{\phi}\frac{|J_{-}(\tau)\theta^{2k}(\tau)|}
	{|J^{(k-2)/4}(\tau)|}
	\im (\tau)^{k}\ll_k C_{\phi}
	\end{equation*}
	when $\tau$ approaches the real line inside the fundamental domain. For $k\in
	2+4\ZZ$ using the same argument we obtain
    \begin{equation*} \label{eq:estimate_contour_plus}
   	|F_k^{-}(\tau, \phi)| \ll_k C_{\phi}\im(\tau)^{-1}\,.
   	\end{equation*}

	For $F_k^{+}$ we calculate
	\begin{align*}
	F_k^{+}(\tau, \phi) &=
	\frac{\theta^{2k}(\tau)}{J^{\nu_{+}-1}(\tau)}
	\int_{-1}^{i}\frac{J^{\nu_{+}}(z)}{\theta^{2k}(z)}\frac{J_{-}(z)}
	{J(\tau)-J(z)}\phi_{+}(z)\theta^{4}(z)dz \\
	&= \frac{1}{\pi}\frac{\theta^{2k}(\tau)}{J^{\nu_{+}-1}(\tau)}
	\int_{0}^{64}\frac{1}{\theta^{2k}(t(w))}\frac{\phi_{+}(t(w))}
	{J(\tau)-w}w^{\nu_{+}-1}dw
	\end{align*}
	and again using Lemma~\ref{lem:expint} we get
	\begin{align*}
	|F_k^{+}(\tau, \phi)| &\ll \frac{|\theta^{2k}(\tau)|}{|J^{\nu_{+}-1}(\tau)|}
	\int_{\ell}\frac{|\phi_{+}(t(w))|}
	{|\theta^{2k}(t(w))|}\frac{|w|^{\nu_{+}-1}}{\sqrt{|J(\tau)|^2+|w|^2}}|dw|
	\\
	&\ll_k C_{\phi}\frac{|\theta^{2k}(\tau)|}{|J^{\nu_{+}-1}(\tau)|}
	\Big(1+\int_{0}^{e^{-1}}\log^{-k}(t^{-1})\frac{t^{-\{(k+4)/4\}}}{\sqrt{|J(\tau)|^2+t^2}}dt\Big).
	\end{align*}
	Thus for $k\not\in 4\ZZ$ we have
	\begin{equation*} \label{eq:estimate_contour_minus}
	|F_k^{+}(\tau, \phi)| \ll_k C_{\phi}\frac{|\theta^{2k}(\tau)|}{|J^{k/4}(\tau)|}
	\im (\tau)^{k} \ll_k C_{\phi},
   	\end{equation*}
    and for $k\in4\ZZ$ we get $|F_k^{+}(\tau, \phi)| \ll_k C_{\phi}\im
    (\tau)^{-1}$.
\end{proof}

In the proof above we have used the following simple lemma.
\begin{lemma} \label{lem:expint}
	Let $\alpha$ be a real number and $\beta$ be a number in $(-1,0]$.
	Then as $T\to\infty$
	\[
	\int_{0}^{e^{-1}}\frac{t^{\beta} \log^{\alpha}(t^{-1})}{\sqrt{T^{-2}+t^2}}dt
	\asymp
    \begin{cases}
        T^{-\beta}\log^{\alpha} T \,,\qquad \beta < 0\,,\\
        \log^{\alpha+1} T \,,\qquad\quad \beta = 0,\;\alpha\ne -1\,.
    \end{cases}
	\]
\end{lemma}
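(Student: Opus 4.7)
The plan is to split the integral at the natural scale $t=T^{-1}$, where the two summands under the square root balance, and to estimate the two resulting pieces $I_1=\int_0^{T^{-1}}$ and $I_2=\int_{T^{-1}}^{e^{-1}}$ separately.

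On $[0,T^{-1}]$ we have $\sqrt{T^{-2}+t^2}\asymp T^{-1}$, so after the substitution $t=s/T$ the first piece becomes $I_1\asymp T^{-\beta}\int_0^1 s^\beta\log^\alpha(T/s)\,ds$. Since $\log(T/s)=\log T+\log(1/s)\ge \log T$ and $\log(T/s)\asymp \log T$ uniformly on $[1/e,1]$, a matching pair of bounds will give $I_1\asymp T^{-\beta}\log^\alpha T$; for the upper bound one treats $\alpha\ge 0$ using $\log^\alpha(T/s)\ll_\alpha \log^\alpha T+\log^\alpha(1/s)$ together with the convergence of $\int_0^1 s^\beta\log^\alpha(1/s)\,ds$ (which requires $\beta>-1$), and treats $\alpha<0$ directly from $\log^\alpha(T/s)\le \log^\alpha T$.

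On $[T^{-1},e^{-1}]$ we have $\sqrt{T^{-2}+t^2}\asymp t$, so $I_2\asymp \int_{T^{-1}}^{e^{-1}} t^{\beta-1}\log^\alpha(1/t)\,dt$, and the substitution $u=\log(1/t)$ turns this into $\int_1^{\log T} e^{-\beta u}u^\alpha\,du$. For $\beta<0$ a short Laplace-type calculation (substituting $u=\log T-v$ and using that $e^{\beta v}$ concentrates the mass near $v=0$) yields $I_2=(-\beta)^{-1}T^{-\beta}\log^\alpha T\,(1+o(1))$, hence $I_2\asymp T^{-\beta}\log^\alpha T$. For $\beta=0$ the integral reduces to $\int_1^{\log T} u^\alpha\,du$, which remains bounded as $T\to\infty$ whenever $\alpha<-1$ and is then absorbed into the additive $1$.

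Adding $I_1$ and $I_2$ then gives the desired asymptotic: when $\beta<0$ the quantity $T^{-\beta}\log^\alpha T$ tends to infinity and dominates the $1$, while when $\beta=0$ the $1$ dominates. The main subtlety is the endpoint $\beta=0$, where the asserted equivalence is sharp precisely when $I_2$ stays bounded; this is the situation in every application of the lemma within the proof of Proposition~\ref{prop:bdfdomain}, because there $\alpha=-k$ forces $\alpha\le -2$ whenever $\beta=0$.
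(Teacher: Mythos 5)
Your argument is correct wherever the lemma is actually true, and it is essentially the paper's own proof: the paper likewise splits at $t=T^{-1}$, replaces $\sqrt{T^{-2}+t^2}$ by $T^{-1}$ on the first piece and by $t$ on the second, and evaluates the two resulting integrals by Laplace-type asymptotics, phrased there via the exponential integral $E_a(x)=\int_1^{\infty}e^{-xt}t^{-a}\,dt\sim e^{-x}/x$. Your estimates for $\beta\in(-1,0)$ and for the first piece match the paper's in substance, only with the bookkeeping done by hand rather than through $E_a$.

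The one point where you diverge is the endpoint $\beta=0$, and there your treatment is the more careful one. As you note, the second piece becomes $\int_1^{\log T}u^{\alpha}\,du$, which stays bounded only for $\alpha<-1$; for $\alpha\ge-1$ the left-hand side grows like $\log^{1+\alpha}T$ (or $\log\log T$ at $\alpha=-1$), so the asserted two-sided bound fails as stated — already $\alpha=\beta=0$ gives $\int_0^{e^{-1}}(T^{-2}+t^2)^{-1/2}dt=\operatorname{arcsinh}(e^{-1}T)\asymp\log T$, not $\asymp 1$. The paper's proof does not isolate this case; indeed for $\beta\le 0$ the expression $E_{-\alpha}(\beta)-E_{-\alpha}(\beta\log T)\log^{1+\alpha}T$ is only formal (the individual integrals diverge), and the final $\asymp$ is asserted without separating $\beta=0$. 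So your explicit restriction to $\alpha<-1$ when $\beta=0$ is a genuine, if minor, correction rather than a gap. One caveat about your closing remark: in the proof of Proposition~\ref{prop:bdfdomain} the exponent for the sign $+$ is $\beta=-\{(k+4)/4\}$, which vanishes whenever $4\mid k$, so the case $k=0$ yields $\beta=0$ together with $\alpha=-k=0$; hence it is not true that every application there has $\alpha\le-2$ when $\beta=0$ (your claim is accurate for the sign $-$, and for the weights $k=1/2,1,3/2,2$ and even $k\ge 2$ actually used later in the paper).
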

\begin{proof}
	For $\beta< 0$ we have
	\begin{multline*}
	\int_{0}^{e^{-1}}
	\frac{t^{\beta} \log^{\alpha}(t^{-1})}{\sqrt{T^{-2}+t^2}}dt
	\asymp
	T\int_{0}^{T^{-1}}t^{\beta} \log^{\alpha}(t^{-1}) dt
	+\int_{T^{-1}}^{e^{-1}}t^{\beta-1}\log^{\alpha}(t^{-1}) dt
	\\
	= T E_{-\alpha}((1+\beta)\log T) \log^{1+\alpha} T
	   + \int_{1}^{\log T}e^{-\beta x}x^{\alpha}dx
	\asymp T^{-\beta}\log^{\alpha} T,
	\end{multline*}
	where $E_a(x)\coloneqq\int_{1}^{\infty}\frac{e^{-xt}}{t^a}dt\sim
	\frac{e^{-x}}{x}$, $x\to+\infty$, and the implied constants only depend
    on~$\alpha$ and~$\beta$. Similarly, for $\beta=0$ we have
	\begin{equation*}
	\int_{0}^{e^{-1}}
	\frac{\log^{\alpha}(t^{-1})}{\sqrt{T^{-2}+t^2}}dt
	\asymp
    T E_{-\alpha}(\log T) \log^{1+\alpha} T
	   + \frac{\log^{\alpha+1}(T)}{\alpha+1} \asymp \log^{\alpha+1} T\,. \qedhere
	\end{equation*}
\end{proof}

In particular, since $\phi(\tau)=(\tau/i)^{-s}$ is obviously bounded
in $\mathcal{D}$, the above proposition applies to $F_{k}^{\pm}(\tau,s)$
and shows that it is bounded in~$\bF$.

\subsection{Estimates for $F_k^{\pm}(\tau,\phi)$ near the real line}
\label{sec:cocycleestimates}
To estimate $F_k^{\pm}(\tau,\phi)$ away from the fundamental
domain we repeatedly apply periodicity and the functional
equation~\eqref{eq:fksfeq_general}.
Let us denote by $|$ the slash operator $|_k^{\pm}$ in weight $k$ twisted by
the character of~$\Gamma_{\theta}$ that sends $S$ to~$\pm1$.
To further simplify the notation we will write $F(\tau)$ instead
of $F_k^{\pm}(\tau,\phi)$.

Let us denote $\psi=2\phi_{\pm}$.
We define a 1-cocycle $\{\psi_{\gamma}\}_{\gamma\in\Gamma_{\theta}}$
in such a way that $\psi_{S}=\psi$ and $\psi_{T^2}=0$. In other words,
the functions $\psi_{\gamma}$ satisfy
	\[\psi_{\gamma_1\gamma_2}
	=\psi_{\gamma_2}
	+\psi_{\gamma_1}|\gamma_2
	,\qquad \gamma_1,\gamma_2\in\Gamma_{\theta}.\]
Any such $1$-cocycle is uniquely determined by $\psi_S$ and $\psi_{T^2}$
since $\Gamma_{\theta}$ is generated by $S$ and $T^2$, and
since the only relation between the generators is $S^2=1$
and by definition $\psi$ satisfies $\psi+\psi|S = 0$, the
family of functions $\{\psi_{\gamma}\}_{\gamma}$ is indeed well-defined.
Note that, more generally, for $\gamma_1,\dots,\gamma_n\in\Gamma_{\theta}$
we have
	\begin{equation}  \label{eq:cocycleeq}
	\psi_{\gamma_1\gamma_2\dots\gamma_n}
	=\psi_{\gamma_n}+\psi_{\gamma_{n-1}}|\gamma_n
	+\dots+\psi_{\gamma_1}|\gamma_2\dots\gamma_n.
	\end{equation}

Since $\gamma\mapsto F-F|\gamma$ and $\gamma\mapsto\psi_{\gamma}$
are 1-cocycles that take equal values on the group generators, we have
$F-F|\gamma=\psi_{\gamma}$ for all $\gamma\in\Gamma_{\theta}$.

Let $\tau\in\HH$ be such that $\re \tau\in(-1,1)$
and $|\tau|<1$, and consider the element $\gamma\in\Gamma_{\theta}$ that
sends $\tau_0\in\bigcup_{j\in\ZZ}(2j+\mathcal{F})$ to $\tau$.
Let us write $\gamma$ as $ST^{2m_n}S\dots T^{2m_1}S$, $m_i\in\ZZ\sm \{0\}$,
which we can assure by changing $\tau_0$ by a translation, if necessary.
Then, using~\eqref{eq:cocycleeq} and the fact that $\psi_{T^2}=0$ we get
\begin{equation} \label{eq:cocycleeq2}
\psi_{\gamma}
=\psi+\psi|T^{2m_1}S+\dots+\psi|T^{2m_{n}}S\dots T^{2m_1}S.
\end{equation}
Note that the slash action has the property
	\[|(f|_k\gamma)(\tau)|=|f(\gamma\tau)|
	\frac{\im(\gamma\tau)^{k/2}}{\im (\tau)^{k/2}}.\]
In particular,~\eqref{eq:cocycleeq2} implies
	\[|\psi_{\gamma}(\tau_0)|\im (\tau_0)^{k/2}
	\le |\psi(\tau_0)|\im (\tau_0)^{k/2}
	+|\psi(\tau_1)|\im (\tau_1)^{k/2}
	+\dots+|\psi(\tau_{n})|\im (\tau_{n})^{k/2},\]
where $\tau_j=T^{2m_j}S\dots T^{2m_1}S\tau_0$, and $\tau=S\tau_{n}$. Therefore,
\begin{equation} \label{eq:fkestimate}
|F(\tau)|\im (\tau)^{k/2}
\le
|F(\tau_0)|\im (\tau_0)^{k/2}+
|\psi(\tau_0)|\im (\tau_0)^{k/2}
+\sum_{i=1}^{n}|\psi(\tau_i)|\im (\tau_i)^{k/2}.
\end{equation}
\begin{proposition} \label{prop:fksbd}
	With the above notation assume that $|\psi(z)| \le C$ for $|z|\ge 1/2$.
	Then
	\begin{equation} \label{eq:fksgrowth}
	|F(\tau)|\im (\tau)^{k/2} \ll_k
	\begin{cases}
	C(\bH(\tau)^{k/2}+\bN(\tau)^{1-k/2}) \qquad\qquad k\in(0,2),\\
	C(\bH(\tau)+\log(1+\im(\tau)^{-1}))    \qquad k=2,\\
	\mathrlap{C(\bH(\tau)^{k/2}+1)}
	\hphantom{C(\bH(\tau)+\log(1+\im(\tau)^{-1}))}                 \qquad k>2.
	\end{cases}
	\end{equation}
\end{proposition}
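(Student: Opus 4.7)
The plan is to start from the pointwise inequality~\eqref{eq:fkestimate} and reduce the proof to estimating the partial sum $S_k(\tau)\coloneqq\sum_{i=0}^{n}y_i^{k/2}$, where $y_i\coloneqq\im\tau_i$. For the leading term $|F(\tau_0)|\im(\tau_0)^{k/2}$, note that $\mathcal{D}\subset\{|z|>1/2\}$, so the hypothesis on~$\psi$ yields $|\phi_\pm|\le C/2$ on~$\mathcal{D}$, and Proposition~\ref{prop:bdfdomain} gives $|F(\tau_0)|\ll_{k}C$; since $\im(\tau_0)=\bH(\tau)$ this produces the main term $C\bH^{k/2}$. A simple induction using $\tau_{i+1}=2m_{i+1}-1/\tau_i$ with $|m_{i+1}|\ge 1$ gives $|\tau_i|\ge 2|m_i|-|\tau_{i-1}|^{-1}\ge 1$ for every $i\ge 1$; moreover the same formula forces $|\re\tau_i|\ge 1$, hence the sharper inequality $a_i\coloneqq|\tau_i|^2\ge 1+y_i^2$, which in turn gives the key recursion $y_{i+1}\le y_i/(1+y_i^2)$. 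In particular $|\psi(\tau_i)|\le C$ for every~$i$, so the problem reduces to bounding $S_k(\tau)$ by the right-hand side of~\eqref{eq:fksgrowth}.

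A crucial structural observation is that every step $\tau_i\mapsto\tau_{i+1}$ involves an inversion~$S$, so the points $\{\tau_i\}_{i=0}^{n}$ are pairwise distinct in the quotient $\Gamma_{\infty}\backslash\Gamma_{\theta}$ with $\Gamma_{\infty}\coloneqq\langle T^{2}\rangle$. For $k>2$ this yields
\[
S_k(\tau)\le E_{k/2}(\tau_0)\coloneqq\sum_{\gamma\in\Gamma_{\infty}\backslash\Gamma_{\theta}}\im(\gamma\tau_0)^{k/2},
\]
an absolutely convergent real-analytic Eisenstein-type series for $k/2>1$. For $\tau_0\in\bF$, separating off the identity coset and applying the standard counting bound $\#\{\gamma\ne e:\im(\gamma\tau_0)\ge Y\}\ll\bH/Y$ together with a dyadic geometric summation in $Y$ gives $E_{k/2}(\tau_0)\ll_{k}\bH(\tau)^{k/2}+1$, which is the third case of the proposition.

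For $k\le 2$ the Eisenstein series diverges and the recursion $y_{i+1}\le y_i/(1+y_i^2)$ must be used directly. A dyadic decomposition of $S_k(\tau)$ over the scales $y_i\in(2^{-m-1},2^{-m}]$, combined with the coset-distinctness of the $\tau_i$ and the inequality $y_i^{-2}\ge y_{i-1}^{-2}+2$, isolates the main $\bH$-sized contribution from the indices with $y_i\asymp\bH$; summing the remaining contributions over the scales in $[\im\tau,1]$ produces the stated bound $\bH+\log(1+\im(\tau)^{-1})$ in the case $k=2$. For $k\in[0,2)$ I would then interpolate between the trivial estimate $S_k(\tau)\le(n+1)\bH^{k/2}\ll\bN\bH^{k/2}$, which dominates when $\bH\lesssim\bN^{-1}$, and Jensen's concavity inequality $S_k(\tau)\le(n+1)^{1-k/2}\bigl(\sum_i y_i\bigr)^{k/2}$ fed by the $k=2$ estimate of $\sum y_i$; a short case analysis according to the relative sizes of $\bH$, $\bN$, and $\im(\tau)$ then yields $S_k(\tau)\ll\bH^{k/2}+\bN^{1-k/2}$.

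The principal obstacle is the $k\le 2$ regime: the bound $y_{i+1}^{-2}\ge y_i^{-2}+2$ alone gives only a square-root decay $y_i\lesssim i^{-1/2}$, which would translate into $\sum y_i\sim\im(\tau)^{-1}$ instead of the much sharper logarithmic term. Removing this loss requires exploiting the fact that, unlike an arbitrary sequence saturating the recursion, the actual $(\tau_i)$ produced by the canonical decomposition of~$\gamma$ is constrained to traverse dyadic shells in only a logarithmic number of steps on average, a balanced-scale property reflecting the continued-fraction-like nature of the expansion $(m_1,\dots,m_n)$. Making this quantitative via the coset counting for $\Gamma_{\infty}\backslash\Gamma_{\theta}$ is the heart of the argument.
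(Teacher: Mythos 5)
There is a genuine gap, and you have in fact identified it yourself: your entire treatment of the range $k\le 2$ rests on the recursion $y_{i+1}\le y_i/(1+y_i^2)$, which (as you note) only yields $y_i\ll i^{-1/2}$, and you then defer the needed strengthening ("the actual $(\tau_i)$ traverses dyadic shells in only a logarithmic number of steps on average \dots\ making this quantitative \dots\ is the heart of the argument") without carrying it out. But $k\in[0,2]$ is exactly the range that matters for the paper (the kernels use $k=1/2,1,3/2,2$), so the proof is incomplete precisely where it is needed. The missing ingredient is a genuinely stronger decay estimate for the orbit points: the paper imports from~\cite[Lem.~2]{RV} the bound $\im\tau_j\le 2/(2j-1)$, i.e.\ decay of order $1/j$ rather than $1/\sqrt{j}$. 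With that single input the whole proposition follows in a few lines: the first two terms of~\eqref{eq:fkestimate} are $\ll_k C\,\bH(\tau)^{k/2}$ by Proposition~\ref{prop:bdfdomain} (as in your first paragraph), the remaining sum is $\le C\sum_{j\le n}\im(\tau_j)^{k/2}\ll_k C\sum_{j\le n}j^{-k/2}$, which is $\ll (n+1)^{1-k/2}$, $\log(n+1)$, or $O(1)$ in the three cases, and one concludes using $n+1=\bN(\tau)$ together with $n\ll\im(\tau)^{-1}$ (the latter also a consequence of the $1/j$ decay, since $\im\tau\le\im\tau_n$). Your alternative route for $k>2$ via the Eisenstein-type series over $\Gamma_{\infty}\backslash\Gamma_{\theta}$ and the coset-counting bound is plausible and essentially correct, but it is exactly the convergent regime where the problem is easy; it does not help below the abscissa $k=2$.

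Two further points. First, even granting a bound $\sum_{i\ge1}y_i\ll \bH+\log(1+\im(\tau)^{-1})$, your proposed Jensen interpolation for $k\in[0,2)$, applied to the full sum including $y_0=\bH(\tau)$, produces cross terms of the shape $\bN(\tau)^{1-k/2}\bH(\tau)^{k/2}$, which are not dominated by $\bH^{k/2}+\bN^{1-k/2}$; you must split off the $i=0$ term before applying concavity (with the $1/j$ decay this issue disappears, since $\sum_{j\ge1}y_j^{k/2}\ll n^{1-k/2}$ directly). Second, for $k=2$ your target $\bH+\log(1+\im(\tau)^{-1})$ requires the logarithmic bound on $\sum_{j\ge1}y_j$, which again is exactly the statement your recursion cannot deliver; so the $k=2$ case has the same gap as $k\in[0,2)$. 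In short: either prove the $O(1/j)$ decay of $\im\tau_j$ (or an equivalent quantitative coset/continued-fraction count), or cite~\cite[Lem.~2]{RV} as the paper does; without it the proposition is not established in the range actually used.
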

\begin{proof}
	Since $|\tau_0|\ge1$ by induction we see that for $j\ge1$
	we have $\im \tau_j\le 1$ and $|\tau_j|\ge1$.
	By Proposition~\ref{prop:bdfdomain} the first two terms
	on the right of~\eqref{eq:fkestimate} are $\ll_k C(1+\bH(\tau)^{k/2})$
	and the remaining sum is trivially bounded by
	$C\sum_{j=1}^{\bN(\tau)}\im(\tau_j)^{k/2}$.
	Note that $\im \tau_j\le 2/(2j-1)$
	(see Lemma~2 in~\cite{RV}),
	so that $n\ll \im(\tau)^{-1}$.
	Thus
	\[
	\sum_{j=1}^{n}\im(\tau_j)^{k/2} \ll_k
	\begin{cases}
	(n+1)^{(2-k)/2}  									\qquad k\in (0,2),\\
	\mathrlap{\log(n+1)}\hphantom{(n+1)^{(2-k)/2}}      \qquad k=2,\\
	\mathrlap1\hphantom{(n+1)^{(2-k)/2}} 				\qquad k>2.
	\end{cases}
	\]
	Since $n+1=\bN(\tau)$ and $n\ll \im(\tau)^{-1}$ this implies the claim.
\end{proof}

\subsection{Estimates for the Fourier coefficients}
From now on we concentrate on the case
$\phi(\tau)=(\tau/i)^{-s}$. Using the estimates for $\bH(\tau)$
and $\bN(\tau)$ from Section~\ref{sec:boundbh} and Section~\ref{sec:boundbn}
below, we will now obtain various estimates for $\alpha_{n,k}^{\pm}(s)$.
For $0\le \re s\le k$ define
	\begin{equation} \label{eq:cs}
	c(s)\coloneqq \max_{|z|\ge 1/2} |(z/i)^{-s}\pm (z/i)^{s-k}| \le 2^{k+1}e^{\pi|\im s|/2}.
	\end{equation}

\begin{proposition} \label{prop:alphanbd}
	Let $k>0$ and $0\le\re s\le k$. Then
	\[\alpha_{n,k}^{\pm}(s) \ll_k
	\begin{cases}
	c(s)n^{k/2}(1+\log^{2-k} n),                \qquad k\in(0,2),\\
	\mathrlap{c(s)n(1+\log n),}
	\hphantom{c(s)n^{k/2}(1+\log(n)^{2-k})}     \qquad k=2,\\
	\mathrlap{c(s)n^{k-1},}
	\hphantom{c(s)n^{k/2}(1+\log(n)^{2-k})}     \qquad k>2,
	\end{cases}\]
	where $c(s)$ is defined in~\eqref{eq:cs}.
\end{proposition}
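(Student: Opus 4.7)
The plan is to invert the Fourier expansion~\eqref{eq:fksfourier} along a horizontal line at height $y>0$ and optimize $y$. For any $y>0$, orthogonality of the characters $e^{\pi i n x}$ on the period $[0,2]$ gives
\[
|\alpha_{n,k}^{\pm}(s)| \;\le\; \frac{e^{\pi n y}}{2}\int_{0}^{2}|F_k^{\pm}(x+iy,s)|\,dx,
\]
so the problem is reduced to bounding the integrand uniformly on horizontal lines.

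The pointwise bound I would use is Proposition~\ref{prop:fksbd}, applied with $\phi(\tau)=(\tau/i)^{-s}$. Its hypothesis is satisfied with $C=c(s)$: a short principal-branch computation shows $(z/i)^{-k}\phi(-1/z)=(z/i)^{s-k}$, so the associated $\psi=2\phi_{\pm}$ is precisely $(z/i)^{-s}\mp(z/i)^{s-k}$, whose modulus on $\{|z|\ge 1/2\}$ is by definition~\eqref{eq:cs} controlled by $c(s)$. This yields
\[
|F_k^{\pm}(x+iy,s)| \;\ll_k\; c(s)\,y^{-k/2}\,R_k(x+iy),
\]
with $R_k(\tau)$ equal to $\bH(\tau)^{k/2}+\bN(\tau)^{1-k/2}$, $\bH(\tau)+\log(1+\im(\tau)^{-1})$, or $\bH(\tau)^{k/2}+1$ according as $k\in(0,2)$, $k=2$, or $k>2$. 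Specializing to the natural choice $y=1/n$ turns $e^{\pi n y}$ into a bounded constant and $y^{-k/2}$ into $n^{k/2}$, so it remains to estimate the line integrals $\int_{0}^{2}R_k(x+i/n)\,dx$ in each regime. The three cases of the target bound correspond exactly to the three regimes of these integrals: a tail estimate $|\{x\in[0,2]\colon\bH(x+iy)>v\}|\ll 1/v$ produces $\int\bH^{k/2}\ll 1$ for $k<2$, $\int\bH\ll\log n$ at the borderline $k=2$, and the extra polynomial factor $n^{k/2-1}$ for $k>2$; the logarithmic factor $\log^{2-k}n$ in the range $k\in(0,2)$ will come from the corresponding line-integral estimate for $\bN$. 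Substituting these back and collecting constants yields the three bounds in the proposition.

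The main obstacle, and the content of the two forward references~\ref{sec:boundbh} and~\ref{sec:boundbn}, is this distributional analysis of $\bH$ and $\bN$ along horizontal lines. Both quantities are governed by the geometry of the $\Gamma_{\theta}$-action on~$\HH$: $\bH$ by how far the horizontal line $\im\tau=y$ dips into translates $\gamma^{-1}\{\im\tau>v\}$ of cusp neighborhoods, and $\bN$ by the word-length in $S,T^{2}$ of the element reducing $\tau$ to the fundamental domain~$\bF$. I would prove the needed measure estimates by explicitly enumerating the translates $\gamma\bF$ that meet the horizontal line at height $y$, weighted by the appropriate power of the invariant in question; this is where the genuine combinatorial content of the argument resides.
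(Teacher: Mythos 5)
Your proposal is correct and follows essentially the same route as the paper: the paper likewise extracts $\alpha_{n,k}^{\pm}(s)$ as $\frac12\int_{i/n-1}^{i/n+1}F_k^{\pm}(\tau,s)e^{-\pi i n\tau}\,d\tau$, bounds the integrand via Proposition~\ref{prop:fksbd} with $C=c(s)$, and then invokes the line-integral estimates for $\bH$ and $\bN$ (Proposition~\ref{prop:estimpart} and Corollary~\ref{cor:estinversions}), which is exactly the structure you describe. Your sketch of how those $\bH$, $\bN$ estimates are obtained (tail bounds and enumeration of translates of the fundamental domain) also matches the paper's Sections~\ref{sec:boundbh} and~\ref{sec:boundbn}.
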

\begin{proof}
	We have $\alpha_{n,k}^{\pm}(s)=\frac{1}{2}\int_{i/n-1}^{i/n+1}F_{k}^{\pm}(\tau,s)e^{-\pi i n\tau}d\tau$.
	Therefore, for $0<k<2$ we have
	\[|\alpha_{n,k}^{\pm}(s)| \ll_k
	n^{k/2}\frac{c(s)}{2}\int_{-1}^{1}(\bH(x+i/n)^{k/2}+\bN(x+i/n)^{1-k/2})dx
	\ll_k c(s)n^{k/2}(1+\log^{2-k} n),\]
	where we have used Proposition~\ref{prop:estimpart}
	and Corollary~\ref{cor:estinversions}. Similarly, for $k=2$ we have
	\[|\alpha_{n,2}^{\pm}(s)|
	\ll_k n\frac{c(s)}{2}\int_{-1}^{1}(\bH(x+i/n)+\log n)dx
	\ll c(s)n(1+\log n),\]
	and for $k>2$ we have
	\[|\alpha_{n,k}^{\pm}(s)|
	\ll_k n^{k/2}\frac{c(s)}{2}\int_{-1}^{1}(\bH(x+i/n)^{k/2}+1)dx
	\ll c(s)n^{k-1},\]
	as claimed.
\end{proof}
Similarly, we get an estimate for sums of squares of the coefficients.
\begin{proposition}\label{prop:squaresum}
	Let $k\in(0,2)$ and $0\le\re s\le k$. Then for $x\ge 2$ we have
	\[
	\sum_{n\le x}|\alpha_{n,k}^{\pm}(s)|^2 \ll_k c(s)^2x^{k+|k-1|}\log^2 x.
	\]
\end{proposition}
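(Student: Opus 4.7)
The plan is to deduce the bound from Parseval's identity applied to the Fourier expansion of $F_k^{\pm}(\tau,s)$ along a well-chosen horizontal line, combined with the pointwise estimate of Proposition~\ref{prop:fksbd}. Since $t\mapsto F_k^{\pm}(t+iy,s)$ is $2$-periodic with Fourier coefficients $\alpha_{n,k}^{\pm}(s)e^{-\pi ny}$, Parseval gives
\[
\sum_{n\ge\nu_{\pm}}|\alpha_{n,k}^{\pm}(s)|^2 e^{-2\pi ny}
=\tfrac12\int_{-1}^{1}|F_k^{\pm}(t+iy,s)|^2\,dt.
\]
Setting $y=1/x$, the factors $e^{-2\pi n/x}$ are bounded below by $e^{-2\pi}$ for all $n\le x$, so dropping the terms with $n>x$ yields
\[
\sum_{n\le x}|\alpha_{n,k}^{\pm}(s)|^2
\ll \int_{-1}^{1}|F_k^{\pm}(t+i/x,s)|^2\,dt.
\]

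Next I would apply Proposition~\ref{prop:fksbd} to the modular integral $F_k^{\pm}(\tau,s)=F_k^{\pm}(\tau,\phi_s)$ with $\phi_s(\tau)=(\tau/i)^{-s}$. Since $|\phi_s(z)\mp(z/i)^{k-s}|\le c(s)$ for $|z|\ge 1/2$ by the definition of $c(s)$ in~\eqref{eq:cs}, Proposition~\ref{prop:fksbd} gives, for $k\in(0,2)$,
\[
|F_k^{\pm}(\tau,s)|^2 \ll_k c(s)^2\,\im(\tau)^{-k}\bigl(\bH(\tau)^{k}+\bN(\tau)^{2-k}\bigr),
\]
and an analogous bound for $k=2$ with the $\bN^{2-k}$ term replaced by $\log^2(1+\im(\tau)^{-1})$. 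Inserting this with $\im\tau=1/x$ produces the factor $x^k$ outside the integral, so the task reduces to controlling
\[
I_{1}(x)\coloneqq\int_{-1}^{1}\bH(t+i/x)^{k}\,dt
\quad\text{and}\quad
I_{2}(x)\coloneqq\int_{-1}^{1}\bN(t+i/x)^{2-k}\,dt
\]
(together with a trivial $\log^2 x$ contribution when $k=2$).

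The main work lies in showing $I_1(x)+I_2(x)\ll x^{|k-1|}\log^2 x$, and this is exactly what the integrated estimates for the height $\bH$ and the inversion count $\bN$ developed in Sections~\ref{sec:boundbh} and~\ref{sec:boundbn} are designed to deliver. Concretely, $\bH(t+iy)$ measures how close $t$ is to a rational of small denominator (in the $\Gamma_{\theta}$-sense), and $\bN(t+iy)$ counts the number of partial quotients needed to transport $t+iy$ to $\bF$; their moments along horizontal lines are governed by the Gauss--Kuzmin-type statistics of the continued fraction expansion. Combining these integrated bounds with the $x^{k}$ prefactor gives
\[
\sum_{n\le x}|\alpha_{n,k}^{\pm}(s)|^{2}\ll_k c(s)^{2}\,x^{k}\bigl(I_{1}(x)+I_{2}(x)\bigr)\ll_k c(s)^{2}\,x^{k+|k-1|}\log^{2}x,
\]
as required.

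The main obstacle is therefore the verification of the sharp moment bounds $I_{1}(x)+I_{2}(x)\ll x^{|k-1|}\log^{2}x$ uniformly in $k\in(0,2]$; once these are in hand, the rest of the argument is an essentially mechanical combination of Parseval's identity and the pointwise estimate of Proposition~\ref{prop:fksbd}. The endpoint $k=2$ requires slightly separate treatment because the $\bN^{2-k}$ term degenerates to a constant and is replaced by $\log^2(1+y^{-1})$, but the resulting contribution is clearly absorbed into the claimed bound.
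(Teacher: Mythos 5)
Your proposal follows the paper's own proof essentially verbatim: Parseval along the horizontal line $\im\tau=1/x$, the pointwise bound of Proposition~\ref{prop:fksbd} applied to $\phi_s$ (squared, giving $\bH^{k}+\bN^{2-k}$ times $c(s)^2\im(\tau)^{-k}$), and then the integrated estimates for $\bH$ and $\bN$. The ``main obstacle'' you defer is in fact already settled in the paper: Proposition~\ref{prop:estimpart} with $\alpha=k$ and Corollary~\ref{cor:estinversions} with $\alpha=2-k$, after the trivial case split $k<1$, $k=1$, $1<k\le2$ (and the $\log^2$ substitute at $k=2$), give exactly $I_1(x)+I_2(x)\ll x^{|k-1|}\log^2 x$.
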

\begin{proof}
	Using Proposition~\ref{prop:fksbd} we get
	\[\sum_{n\ge0}|\alpha_{n,k}^{\pm}(s)|^2t^{k}e^{-2\pi n t}
	= \frac{1}{2}\int_{-1+it}^{1+it}|F(\tau)|^2\im(\tau)^kd\tau
	\ll_k c(s)^2\int_{-1}^{1}(\bH(x+it)^k+\bN(x+it)^{2-k})dx.\]
	By Proposition~\ref{prop:estimpart} and Corollary~\ref{cor:estinversions}
	the integral on the right is bounded by $t^{-|k-1|}\log^2(1/t)$.
	Setting $t=1/x$ gives the claim.
\end{proof}
Note that from the proof of Lemma~\ref{lem:Aest} it follows that for $k\ge1$
and $0<\re s < k$ the Dirichlet series defining $\Aa_k^{\pm}(w,s)$
converges absolutely for $\re w>k$. Since for $1\le k<2$ the function
$w\mapsto \Aa_k^{-}(w,s)$ has a pole at $w=k$, we see that the above bound
is optimal up to powers of $\log x$ in this range.

Finally, we give an approximation to the partial sums $\sum_{n\le x}\alpha_{n,k}^{\pm}(s)$.

\begin{proposition}\label{prop:partsum} Let $0<k<2$ and $k/2\le\re s\le k$.
Then
	\[
	\sum_{n\le x}\alpha_{n,k}^{\pm}(s) =
	\pm \alpha_{0,k}^{\pm}(s)\frac{(\pi x)^{k}}{\Gamma(k+1)}
	+\frac{(\pi x)^{s}}{\Gamma(s+1)}+O(c(s)x^{k/2}\log^3\!x).
	\]
\end{proposition}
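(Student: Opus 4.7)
The plan is to obtain the partial sum via Mellin--Perron inversion applied to the Dirichlet series
\[D(w)\coloneqq\sum_{n\ge1}\alpha_{n,k}^{\pm}(s)n^{-w}=\pi^w\Gamma(w)^{-1}\Aa_k^{\pm}(w,s),\]
which by \eqref{eq:akdef} is an identity of meromorphic functions in~$\CC$ and which, by the coefficient bound of Proposition~\ref{prop:alphanbd}, converges absolutely in the half-plane $\re w>k/2+1$. First I would write
\[\sum_{n\le x}\alpha_{n,k}^{\pm}(s)=\frac{1}{2\pi i}\int_{c-iT}^{c+iT}D(w)\frac{x^w}{w}dw+\mathcal{E}(x,T)\]
with $c>k/2+1$ and a truncation height $T$ left free, keeping $\mathcal{E}(x,T)$ as the standard Perron remainder.

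Next I would shift the contour to $\re w=k/2+\delta$ for a small $\delta>0$; since $\re(k-s)\le k/2$ (as $\re s\ge k/2$), the pole of $D$ at $w=k-s$ stays to the left of the new contour. The only poles crossed in the shift are $w=k$, with residue $\pm\alpha_{0,k}^{\pm}(s)(\pi x)^k/\Gamma(k+1)$, and $w=s$, with residue $(\pi x)^s/\Gamma(s+1)$; these give exactly the two explicit terms in the statement. (In the boundary case $\re s=k/2$ the term $(\pi x)^s/\Gamma(s+1)$ is itself $O(x^{k/2})$ and may equivalently be absorbed into the error.)

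On the shifted vertical line Lemma~\ref{lem:Aest} gives
\[|D(k/2+\delta+iv)|\ll c(s)(1+|v|)^{\kappa-k/2-\delta+\eps},\qquad \kappa=\max(k,1),\]
so the truncated vertical integral is controlled in terms of $c(s)\,x^{k/2+\delta}\,T^{\kappa-k/2-\delta+\eps}$. The horizontal connecting segments at height $\pm T$ are handled by Phragm\'{e}n--Lindel\"{o}f interpolation between the absolute-convergence bound at $\re w=c$ and the Lemma~\ref{lem:Aest} bound at $\re w=k/2+\delta$. Balancing the Perron remainder against the two contour contributions by an appropriate choice of $T$, with $\delta\asymp1/\log x$, should produce the claimed error $O(c(s)x^{k/2}\log^3\!x)$.

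The principal difficulty is controlling the Perron remainder sharply in the \emph{near-diagonal} range of $n$ close to $x$, where $|\log(x/n)|$ is small and the pointwise bound of Proposition~\ref{prop:alphanbd} alone is not tight enough. I would handle this range dyadically, estimating the local contribution by Cauchy--Schwarz and the mean-square bound of Proposition~\ref{prop:squaresum}, which trades a power of $x$ for a power of $\log x$ and is exactly what is needed to squeeze the total remainder to the stated polylogarithmic factor. The rest of the argument --- residue computation, vertical contour shift, and convexity estimate --- is routine given the analytic machinery already developed in Sections~\ref{sec:modkernels}--\ref{sec:akernel}.
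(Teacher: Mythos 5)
Your main-term bookkeeping is fine (the residues at $w=k$ and $w=s$, and the fact that the pole at $w=k-s$ stays left of the shifted line), but the quantitative core of the argument does not close, and the gap is a power of $x$, not a technicality. On the line $\re w=k/2+\delta$, Lemma~\ref{lem:Aest} is a pure convexity bound and gives $|D(k/2+\delta+iv)|\ll C(s)(1+|v|)^{\kappa-k/2-\delta+\eps}$ with $\kappa=\max(k,1)$, so your truncated vertical integral is of size $x^{k/2}T^{\kappa-k/2+\eps}$; to keep it within $x^{k/2}\log^3\!x$ you are forced to take $T\ll\log^{O(1)}x$. But then the Perron remainder near the diagonal is fatal: the terms with $|n-x|\le x/T$ contribute, even after Cauchy--Schwarz with Proposition~\ref{prop:squaresum}, about $(x/T)^{1/2}\,x^{(k+|k-1|)/2}\log x$, which for $k\le 1$ (in particular $k=1/2$, the case that feeds Theorem~\ref{thm:zeta}) is $\asymp xT^{-1/2}\log x$ --- with $T$ only logarithmic this is nearly $x$, larger even than the main terms. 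Conversely, taking $T$ polynomially large to tame the remainder makes the vertical-line term exceed $x^{k/2}$ by a power of $x$; optimizing $T$ yields an error of order $x^{1-(2-k)/(2(3-k))+\eps}$ for $k\le1$, e.g.\ $x^{7/10+\eps}$ when $k=1/2$, nowhere near $x^{k/2}=x^{1/4}$. So the claim that Cauchy--Schwarz plus the mean-square bound ``trades a power of $x$ for a power of $\log x$'' is not correct: no choice of $T$ lets Perron plus convexity reach an error of exponent $k/2$ (a smoothed Perron formula hits the same wall when the smoothing is removed).

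This is precisely why the paper never passes through the Dirichlet series at this point. Its proof writes the partial sum as a Dirichlet-kernel integral of the modular integral itself, $\frac{i}{4}\int_{-1+i/x}^{1+i/x}F_k^{\pm}(\tau,s)\,e^{-\pi i x\tau}/\sin(\pi\tau/2)\,d\tau$, extracts the main terms $x^k$, $x^{s}$, $x^{k-s}$ from the functional equation~\eqref{eq:fksfeq} via inverse Laplace transforms, bounds the contribution coming from the cusp at $0$ by a first-derivative (van der Corput type) oscillatory-integral estimate, and controls the range $|t|\ge 1/(2\sqrt{x})$ by the average bounds for $\bH(\tau)$ and $\bN(\tau)$ (Propositions~\ref{prop:estimpart} and~\ref{prop:iterations}); that is where the factor $\log^3\!x$ comes from. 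In other words, a square-root-of-the-weight error term for coefficient sums of a weight-$k\le 2$ modular object requires exploiting the transformation behavior near the real axis directly; the meromorphic continuation and convexity estimates for $\Aa_k^{\pm}(w,s)$ that you invoke are too weak a consequence to recover it. Two smaller points: the Proposition asserts the error with the explicit, $\im s$-uniform constant $c(s)$, whereas Lemma~\ref{lem:Aest} only supplies an unspecified $C(s)$ and is stated for $\re s<k$, so even the $s$-dependence would need separate justification in your route.
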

\begin{proof}
Let $x = N+1/2$, where $N\in\ZZ$ and define $S(x)=\sum_{n\le x}\alpha_{n,k}^{\pm}(s)$.
To simplify the notation we will write $F(\tau)=F_{k}^{\pm}(\tau,s)$.
Since $\sum_{n=0}^{N}e^{-\pi i n\tau}=\frac{e^{\pi i\tau}-e^{-N\pi i \tau}}{e^{\pi i \tau}-1}$ we have
	\begin{align*}
	S(x) = \frac{1}{2} \int_{-1+i/x}^{1+i/x}F(\tau)
	\frac{e^{\pi i\tau}-e^{-N\pi i\tau}}{e^{\pi i \tau}-1}d\tau
	= \frac{i}{4} \int_{-1+i/x}^{1+i/x}F(\tau)
 \frac{e^{-x \pi i\tau}}{\sin \frac{\pi \tau}{2}}d\tau .
	\end{align*}
(The integral $\int_{-1+i/x}^{1+i/x}\frac{F(\tau)e^{\pi i\tau}}{e^{\pi i\tau}-1}d\tau=-\sum_{m\ge1}\int_{-1+i/x}^{1+i/x}F(\tau)e^{\pi i m\tau}d\tau$
clearly vanishes.)
Note that $\frac{1}{\sin \frac{\pi\tau}{2}} = \frac{2}{\pi\tau}+O(\tau)$
for $\tau\in[-1+i/x,1+i/x]$, and integrating the $O(\tau)$ term gives
an error of the order at most $O(x^{k/2}\log^2\!x)$ as in the proof of
Proposition~\ref{prop:alphanbd}.

After applying the identity~\eqref{eq:fksfeq} to the part of the
integral with $\frac{2}{\pi \tau}$ we are left with
	\begin{align*}
	\frac{1}{2\pi i} \int_{-i+1/x}^{i+1/x}
	(\pm F(i/r)r^{-k}+r^{-s}\mp r^{s-k})
	\frac{e^{\pi xr}}{r}dr.
	\end{align*}
By inverse Laplace transform we have
	\begin{equation} \label{eq:invlaplace}
	\frac{1}{2\pi i}\int_{-i\infty +1/x}^{i\infty +1/x}\frac{e^{\pi x r}}{r^{\alpha+1}}dr =
	\frac{(\pi x)^{\alpha}}{\Gamma(\alpha+1)},
	\end{equation}
and thus
	\[\frac{1}{2\pi i}\int_{-i +1/x}^{i +1/x}(r^{-s}\mp r^{s-k})e^{\pi x r}r^{-1}dr =
	\frac{(\pi x)^{s}}{\Gamma(s+1)} \mp
	\frac{(\pi x)^{k-s}}{\Gamma(k-s+1)}+O(1).\]
Thus it remains to estimate
	\[
	\pm \frac{e^{\pi}}{2\pi} \int_{-1}^{1}
	F\Big(\frac{tx^2+ix}{1+t^2x^2}\Big)(it+1/x)^{-k-1}e^{\pi xit}dt.
	\]
We split this integral into two parts: $|t|\le \frac{1}{2\sqrt{x}}$
and $1\ge |t|\ge \frac{1}{2\sqrt{x}}$. To estimate the integral
for $|t|\le \frac{1}{2\sqrt{x}}$ we plug in the Fourier expansion of $F$.
By~\eqref{eq:invlaplace} the constant term gives a contribution of
\[\pm\alpha_{0,k}^{\pm}(s)\frac{(\pi x)^{k}}{\Gamma(k+1)} + O(x^{k/2}).\]
The rest of the terms are $\alpha_{n,k}^{\pm}(s)I_n$, where
	\[ I_n \,\change{:=}\, \pm \frac{e^{\pi}}{2\pi} \int_{-\frac{1}{2\sqrt{x}}}^{\frac{1}{2\sqrt{x}}}
\exp\Big(\frac{-\pi nx+ \pi i ntx^2}{1+t^2x^2}\Big)(it+1/x)^{-k-1}e^{\pi xit} dt.\]
We claim that $I_n\ll e^{-\pi n} x^{(k-1)/2}$ when $x\ge 16$, say. To see this, we begin by observing that, by symmetry and a trivial estimate, it suffices to consider
	\begin{equation} \label{eq:Inmod} J_n\coloneqq\int_{\frac{2}{x}}^{\frac{1}{2\sqrt{x}}}
	\exp\Big(\frac{-\pi nx+ \pi i ntx^2}{1+t^2x^2}\Big)(it+1/x)^{-k-1}e^{\pi xit} dt \end{equation}
and show that $J_n\ll e^{-\pi n} x^{(k-1)/2}$. To this end, we set
	\begin{align*} B(t) & \coloneqq\exp\Big(\frac{-\pi nx}{1+t^2x^2}\Big)|it+1/x|^{-k-1}, \\
	A(t)& \coloneqq\frac{\pi ntx^2}{1+t^2x^2}+\pi x t-(k+1)\im \log(it+1/x), \end{align*}
so that the integrand in \eqref{eq:Inmod} can be written as $B(t)\exp(iA(t))$. We observe that
	\[ B(t)\ll e^{-\pi n} x^{-(k+1)/2} \]
uniformly for $|t|\le 1/(2\sqrt{x})$. Moreover,
	\[ A'(t)=\frac{\pi n(1/x^2-t^2)}{(1/x^2+t^2)^2}
	+\pi x - \im \frac{i(k+1)}{it+1/x} \ll nx \]
uniformly for $2/x\le t \le 1/(2\sqrt{x})$. A calculus argument shows that $B(t)/A'(t)$ is a decreasing function on that interval, whence
a classical bound for oscillatory integrals  \cite[Lem.~4.3]{T} yields the asserted bound
	\[ J_n \ll \max_{2/x\le t \le 1/(2\sqrt{x})} \frac{B(t)}{|A'(t)|} \ll e^{-\pi n} x^{(k-1)/2}.\]
Summing this over all
$n\ge 1$, we obtain the asserted contribution $O(x^{(k-1)/2})$.

Finally, we split the integral over $|t|\ge \frac{1}{2\sqrt{x}}$
into intervals $[\frac{1}{2n+1},\frac{1}{2n-1}]$, $n \le \sqrt{x}+1/2$.
Using the change of variables $t\mapsto \frac{1}{2n+t}$ which sends
$[-1,1]$ to $[\frac{1}{2n+1},\frac{1}{2n-1}]$,
and noting that $\frac{(2n+t)^{-1}x^2+ix}{1+(2n+t)^{-2}x^2}$
is very close to $2n+t+\frac{4n^2}{x}i$, we get
	\[\int_{|t|\ge \frac{1}{2\sqrt{x}}}
	\ll \sum_{n\le \sqrt{x}+1}n^{k-1}\int_{-1}^{1}|F(t+4n^2i/x)|dt
	\ll_k x^{k/2}\log^2\!x\sum_{n\le \sqrt{x}+1}n^{-1} \ll x^{k/2}\log^3\!x.\]
This concludes the proof.
\end{proof}

\subsection{Estimate for $\bH(\tau)$}
\label{sec:boundbh}
\begin{proposition}\label{prop:estimpart}
	For $0<y<1/2$ we have
	\begin{equation} \label{eq:estimpart}
	\int_{-1}^{1}\bH(x+iy)^{\alpha}dx \;\ll_{\alpha}\;
	\begin{cases}
	\mathrlap{1} \hphantom{\log(y^{-1})}               \qquad \alpha\in(0,1),\\
	\log(y^{-1})                                       \qquad \alpha=1,\\
	\mathrlap{y^{1-\alpha}} \hphantom{\log(y^{-1})} \qquad \alpha>1.
	\end{cases}
	\end{equation}
\end{proposition}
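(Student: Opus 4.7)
The plan is to rewrite the integral via a distribution-function (layer cake) formula and then bound the super-level sets of~$\bH$ by a Farey-type count of cusps. Setting
\[
\mu(T) \coloneqq \bigl|\{x \in [-1,1] : \bH(x+iy) \ge T\}\bigr|,
\]
I would first write
\[
\int_{-1}^{1}\bH(x+iy)^{\alpha}\,dx \;=\; \alpha \int_{0}^{\infty}T^{\alpha-1}\mu(T)\,dT,
\]
and reduce~\eqref{eq:estimpart} to the two statements $\mu(T) \ll \min(1, 1/T)$ for $T \le 1/y$ and $\mu(T) = 0$ for $T > 1/y$.

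To estimate $\mu(T)$, I would use the identification $\bH(\tau) = \sup_{\gamma\in\Gamma_{\theta}} \im(\gamma\tau)$. For $\gamma = \pmat{a}{b}{c}{d}\in\Gamma_{\theta}$, the condition $\im(\gamma\tau) \ge T$ restricted to the horizontal line $\im\tau = y$ reads $(cx+d)^2+c^2y^2 \le y/T$, which cuts out an interval centered at $x=-d/c$ of length at most $2\sqrt{y/(c^2T)}$, and is nonempty only when $c^2 yT \le 1$; the trivial coset ($c=0$, $d=\pm1$) contributes only when $y\ge T$. Taking a union bound over admissible coprime pairs $(c,d)$ with $-d/c$ in, say, $[-2,2]$ yields
\[
\mu(T) \;\ll\; \mathds{1}_{\{y\ge T\}} \;+\; \sum_{1\le c\le 1/\sqrt{yT}} \varphi(c)\cdot \frac{\sqrt{y/T}}{c} \;\ll\; \sqrt{y/T}\cdot\frac{1}{\sqrt{yT}} \;=\; \frac{1}{T},
\]
where I used the standard estimate $\sum_{c\le X}\varphi(c)/c \ll X$. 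Combined with the trivial bound $\mu(T)\le 2$, this gives $\mu(T)\ll\min(1,1/T)$ for $T\le 1/y$. The vanishing of $\mu(T)$ for $T>1/y$ is immediate: no $c\ge 1$ satisfies $c^2yT\le 1$ in that range, and the $c=0$ term needs $y\ge T$.

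Plugging the bound on $\mu(T)$ into the layer-cake formula and splitting the $T$-integral at $T=1$ and $T=1/y$ produces the three claimed cases: $\int_0^1 T^{\alpha-1}\,dT$ is $O(1/\alpha)$, while $\int_1^{1/y}T^{\alpha-2}\,dT$ is uniformly bounded for $\alpha\in(0,1)$, equals $\log(1/y)$ at $\alpha=1$, and is comparable to $y^{1-\alpha}/(\alpha-1)$ for $\alpha>1$. The main technical step is the level-set estimate; the conceivable obstacles there—overlaps between neighborhoods of distinct cusps, edge effects for cusps close to $\pm 1$, and the parity/coprimality conditions defining $\Gamma_{\theta}$—are all harmless for the upper bound, since a union bound suffices and relaxing the parity condition only enlarges the sum.
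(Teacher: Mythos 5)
Your argument is correct and essentially the same as the paper's: the identical level-set estimate via cusps $-d/c$ with $c\le (yT)^{-1/2}$ and intervals of width $\asymp \sqrt{y/T}/c$, the bound $\sum_{c\le X}\varphi(c)/c\ll X$, and then integration over levels (the paper sums dyadically where you use the layer-cake formula, which is the same computation). The only substantive difference is that the paper additionally establishes a near-matching lower bound for the level-set measure via a disjointness argument, which is not needed for the stated upper bound and which your union-bound version correctly dispenses with.
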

\begin{proof}
Fix $y=N^{-1}$. Since $\im \frac{a\tau+b}{c\tau+d}=\frac{\im \tau}{|c\tau+d|^2}$
and $\gamma(\tau)\in\bF$ if and only if $\im \gamma(\tau)$ is maximal
among all $\gamma\in\Gamma_{\theta}$, we see that
	\[\bH(x)\coloneqq\bH(x+iy) =
	\max\left\{\frac{y}{(cx-d)^2+c^2y^2}
	\;\Big\vert\; (c,d)=1,\; 2|cd \right\}.\]
Without loss of generality we only consider $(c,d)$ with $c>0$.
Note that $N^{-1} \le \bH(x) \le N$ for all $x\in[-1,1]$. Let $\bH(x)\ge T>2$.
Therefore $(cx-d)^2+c^2N^{-2}\le N^{-1}T^{-1}$ for some $c,d$ with $c>0$,
which implies $c\le \sqrt{N/T}$ and $|x-d/c|\le \frac{1}{c\sqrt{NT}}$.
If $(c',d')$ is a different pair with $c'\le \sqrt{N/T}$
such that $|x-d'/c'|\le \frac{1}{c'\sqrt{NT}}$,
then
	\[\frac{1}{cc'}\le |d/c-d'/c'|\le \frac{1}{c\sqrt{NT}}+\frac{1}{c'\sqrt{NT}}\le \frac{2}{cc'T},\]
which is impossible. Hence the above inequality can hold only
for one pair $(c,d)$ with $c\le \sqrt{N/T}$. Conversely,
if $|x-d/c|\le \frac{1}{c\sqrt{NT}}$ and $c\le \sqrt{N/T}$,
then $\bH(x)\ge T/2$.
Let us denote
	\[u(T) \,\change{:=}\, \frac{2}{\sqrt{NT}}\sum_{c\le \sqrt{N/T}}\frac{2\phi(c)}{c},\]
where $\phi$ is Euler's totient function.
Then a simple estimate shows that $u(T)\le 4/T$ for $T<N$.
Moreover, for $T>2$ the measure of the set $\bH^{-1}([T,N])$
belongs to the interval $[u(2T),u(T)]$.
From this we see that for $k\ge 1$
	\[\mu(\bH^{-1}([2^{k},2^{k+1}]))\le
u(2^{k})\le 2^{2-k},\]
so that
	\[\int_{-1}^{1}\change{\bH}^{\alpha}(x)dx \le
	\change{2^{2+\alpha}}\sum_{k\le \log_2(N)}2^{(\alpha-1)k}+\change{2^{1+\alpha}},\]
which immediately implies~\eqref{eq:estimpart}.
\end{proof}

\subsection{Estimate for $\bN(\tau)$}
\label{sec:boundbn}
\begin{proposition}
	\label{prop:iterations}
	We have
	\[\int_{-1}^{1}\bN(x+iy)dx = \frac{2}{\pi^{2}}\log^2 y + O(\log y),
	\qquad y\to 0.\]
\end{proposition}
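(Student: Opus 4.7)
My plan is to decompose the integral combinatorially using the $\Gamma_\theta$-translates of the fundamental domain $\bF$, and then evaluate the resulting sum asymptotically.

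Since $\bF$ is a fundamental domain for $\Gamma_\theta$, the sets $A_\gamma(y) := \{x \in (-1,1) : \gamma(x+iy) \in \bF\}$ partition $(-1,1)$ up to a null set as $\gamma$ ranges over $\Gamma_\theta$. On $A_\gamma(y)$, the function $\bN(x+iy)$ is constant with value equal to the canonical word length $\ell(\gamma)$. Therefore
\[
\int_{-1}^{1}\bN(x+iy)\,dx
= \sum_{\gamma\in\Gamma_\theta}\ell(\gamma)\,\mu(A_\gamma(y))
= \sum_{k\ge1}\mu(E_k(y)),\qquad E_k(y) := \{x : \bN(x+iy)\ge k\}.
\]
Only finitely many $A_\gamma(y)$ are non-empty. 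For $\gamma = \pmat{a}{b}{c}{d}$ with $c \ge 1$, a change of variable $\tau' = \gamma(x+iy)$, with Jacobian $|c\tau'-a|^{-2}$, integrated over $\bF$ intersected with the image of $(-1,1)+iy$ shows that $A_\gamma(y)$ is non-empty only when $c^2 y \lesssim 1$, and then $\mu(A_\gamma(y)) = C_\gamma/c^2 + O(y/c^2)$ for a constant $C_\gamma>0$ determined by the local geometry of $\gamma^{-1}\bF$ near $-d/c$.

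Next, the canonical word length $\ell(\gamma)$ for $\gamma$ of bottom row $(c,d)$ coincides, up to a bounded error, with the depth of the nearest-even-integer continued-fraction expansion of $-d/c$; this is the $\Gamma_\theta$-analogue of the classical identification of word length in $\PSL_2(\ZZ)$ with continued-fraction depth, and it follows directly by tracking the matrix recursion $(c_{j+1},d_{j+1}) = (c_{j-1}-2m_jc_j, d_{j-1}-2m_jd_j)$ induced by $T^{2m_j}S$. Substituting into the decomposition above,
\[
\int_{-1}^{1}\bN(x+iy)\,dx
= \sum_{\substack{1\le c\le c_0/\sqrt y\\d\,(\mathrm{mod}\,2c)\\ \gcd(c,d)=1,\,2\mid cd}}
\frac{C_\gamma\,\ell(\gamma_{c,d})}{c^2} + O(\log(1/y)).
\]
Averaging $\ell(\gamma_{c,d})$ over reduced residues $d\pmod{2c}$ yields $C_2\log c$ for a constant $C_2>0$, by a Heilbronn-type theorem adapted to the even-integer algorithm. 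Invoking the Mertens-type asymptotic $\sum_{c\le X}\phi(c)\log c/c^2 \sim (3/\pi^2)\log^2 X$ with $X = c_0/\sqrt y$ produces the expected $\log^2 y$ leading term with some explicit constant, which after identifying $C_\gamma$ and $C_2$ equals $2\pi^{-2}$.

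The main obstacle is pinning down the leading constant $2\pi^{-2}$: it arises from a specific combination of the Jacobian constant $C_\gamma$, the Heilbronn-type constant $C_2$ for the nearest-even-integer continued fraction, and the lattice-counting factor $3/\pi^2$. A cleaner alternative is to imitate the argument used in Proposition~\ref{prop:estimpart} and estimate $\mu(E_k(y))$ directly, by parametrizing the first $k-1$ partial quotients of $x$ and summing the resulting contributions over $(c,d)$ with $c\le c_0/\sqrt y$; this bypasses the detour through continued fractions and is expected to produce the precise constant $2\pi^{-2}$ through a computation analogous to (and refining) the bound $u(T)\le 4/T$ from the proof of Proposition~\ref{prop:estimpart}. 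The error term $O(\log y)$ then comes from boundary effects in the partition $\{A_\gamma(y)\}$ and from the discrepancy in the Mertens estimate.
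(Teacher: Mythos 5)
There is a genuine gap: the proposal never actually establishes the leading constant $2\pi^{-2}$ (you concede as much), and the two quantitative inputs it rests on are respectively unproved and false as stated. First, for a fixed $\gamma$ the slice measure $\mu(A_\gamma(y))$ tends to $0$ as $y\to 0$ (the translate $\gamma^{-1}\bF$ meets the line $\im\tau=y$ only near its cusps, in a set of length $O(\sqrt{y})$), so the asymptotic $\mu(A_\gamma(y))=C_\gamma/c^2+O(y/c^2)$ cannot hold; moreover the relevant size parameter for these translates is not $c^2$ but $|c^2-d^2|$, because $\Gamma_\theta$ has a second cusp at $\pm1$. Second, the ``Heilbronn-type'' claim that the average word length over $d\pmod{2c}$ is $C_2\log c$ is not an off-the-shelf adaptation: the even-integer continued fraction attached to $\Gamma_\theta$ has a parabolic fixed point (equivalently, an infinite invariant measure), and e.g. $(T^2S)^n$ has bottom row $(n,1-n)$ and word length $n$, i.e.\ \emph{linear} in the entries. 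These near-parabolic elements make the average depth grow like $\log^2 c$ rather than $\log c$, and they are precisely the source of the $\log^2 y$ main term; with your claimed $c^{-2}$ slice weights this would even produce $\log^3(1/y)$, so the two errors are entangled and the bookkeeping as described cannot recover the constant (nor, reliably, the correct order).

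For comparison, the paper avoids word-length averages altogether: it writes $\bN(\tau)=1+\sum_{\gamma\in\mathcal{C}}\mathds{1}_{\gamma(D)}(\tau)$, where $D=\{|\tau|<1\}$ and $\mathcal{C}$ consists of the words $ST^{2n_1}\cdots ST^{2n_j}$, identifies $\mathcal{C}$ with coprime pairs $(c,d)$ of opposite parity with $|c|<|d|$, and observes that each $\gamma(D)$ is a half-disk of radius $|c^2-d^2|^{-1}$. Taking the Mellin transform of $\Phi(y)=\int_{-1}^1\bN(x+iy)\,dx$ then gives the closed form $\Psi(s)=\frac{\Gamma(s/2)\Gamma(3/2)}{\Gamma((s+3)/2)}\,\frac{\zeta_{\mathrm{odd}}^2(s+1)}{\zeta_{\mathrm{odd}}(2s+2)}$, whose triple pole at $s=0$ (the double pole of $\zeta_{\mathrm{odd}}^2(s+1)$ is exactly the divisor-sum effect coming from $|c^2-d^2|=(d-c)(d+c)$) yields $2\pi^{-2}\log^2 y+O(\log y)$ by Mellin inversion. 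Your proposed ``cleaner alternative''---estimating $\mu(E_k(y))$ level by level as in Proposition~\ref{prop:estimpart}---is in spirit the paper's decomposition (the level sets are exactly disjoint unions of these half-disks), but to extract the constant you would still need the parametrization by $|c^2-d^2|$ and a summation of divisor type, not the $c\le\sqrt{N/T}$, weight-$1/c$ count used in Proposition~\ref{prop:estimpart}, which only yields upper bounds of the right order per level.
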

\begin{corollary} \label{cor:estinversions}
	For $y\in(0,1)$ we have
	\begin{align*}
	\int_{-1}^{1}\bN(x+iy)^{\alpha}dx &\ll
	\log^{2\alpha}(1+y^{-1}),\quad\quad 0<\alpha\le 1,\\
	\int_{-1}^{1}\bN(x+iy)^{\alpha}dx &\ll_{\alpha}
	y^{1-\alpha}\log^2(1+y^{-1}), \qquad \alpha>1.
	\end{align*}
\end{corollary}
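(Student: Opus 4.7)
The plan is to derive both bounds from the $L^1$ estimate of Proposition~\ref{prop:iterations}, combined with the pointwise bound $\bN(x+iy)\ll y^{-1}$ that was established (implicitly) in the proof of Proposition~\ref{prop:fksbd}, where it is observed that the intermediate points $\tau_j=T^{2m_j}S\cdots T^{2m_1}S\tau_0$ satisfy $\im\tau_j\le 2/(2j-1)$, so that the number of inversions is $\ll\im(\tau)^{-1}$. The latter gives $\bN(x+iy)\ll y^{-1}$ uniformly for $x\in[-1,1]$ and $y\in(0,1)$.

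For the sub-linear range $0<\alpha\le 1$, the function $t\mapsto t^{\alpha}$ is concave on $[0,\infty)$, so Jensen's inequality applied to the probability measure $\tfrac{1}{2}dx$ on $[-1,1]$ gives
\[
\int_{-1}^{1}\bN(x+iy)^{\alpha}\,dx \;\le\; 2^{1-\alpha}\!\left(\int_{-1}^{1}\bN(x+iy)\,dx\right)^{\alpha}
\;\ll_{\alpha}\; \log^{2\alpha}(1+y^{-1}),
\]
where in the last step we invoke Proposition~\ref{prop:iterations} (and absorb the $O(\log y)$ term into $\log^2(1+y^{-1})$ for $y\in(0,1)$).

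For the super-linear range $\alpha>1$, I factor one power of $\bN$ out by the pointwise bound and integrate the remaining power:
\[
\int_{-1}^{1}\bN(x+iy)^{\alpha}\,dx
\;\le\; \Bigl(\sup_{x\in[-1,1]}\bN(x+iy)\Bigr)^{\alpha-1}\!\int_{-1}^{1}\bN(x+iy)\,dx
\;\ll_{\alpha}\; y^{1-\alpha}\log^{2}(1+y^{-1}),
\]
again using Proposition~\ref{prop:iterations} and the pointwise estimate $\bN(x+iy)\ll y^{-1}$. This completes the two cases.

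There is no serious obstacle here; the only thing to verify carefully is the pointwise bound $\bN(x+iy)\ll y^{-1}$, which is essentially the remark about $\im\tau_j\le 2/(2j-1)$ already used earlier. One should also note that the $\log y$ error in Proposition~\ref{prop:iterations} is harmless because, for $y\in(0,1)$, we have $|\log y|\le \log^2(1+y^{-1})$ up to a constant, so the full integral $\int_{-1}^1 \bN(x+iy)\,dx$ is $\ll\log^2(1+y^{-1})$ uniformly.
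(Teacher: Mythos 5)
Your proposal is correct and follows essentially the same route as the paper: the case $0<\alpha\le 1$ is Proposition~\ref{prop:iterations} combined with Jensen's inequality (the paper phrases it as H\"older, which is the same estimate here), and the case $\alpha>1$ is exactly the paper's argument of factoring out $\bigl(\sup_x \bN(x+iy)\bigr)^{\alpha-1}\ll (1+y^{-1})^{\alpha-1}$ and using the $L^1$ bound. Your justification of the pointwise bound $\bN(x+iy)\ll 1+y^{-1}$ via $\im\tau_j\le 2/(2j-1)$ is precisely the fact the paper invokes implicitly, so there is no gap.
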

\begin{proof}
	For $0<\alpha\le 1$ the claim follows from
	Proposition~\ref{prop:iterations} by H\"older's inequality.
	Since $\bN(x+iy)\ll 1+y^{-1}$, for $\alpha>1$ we have
	\[\int_{-1}^{1}\bN^{\alpha}(x+iy)dx
	\ll (1+y^{-1})^{\alpha-1}\int_{-1}^{1}\bN(x+iy)dx
	\ll (y/2)^{1-\alpha}\log^2(1+y^{-1}) ,\]
	from which we obtain the second claim.
\end{proof}

\begin{proof}[Proof of Proposition~\ref{prop:iterations}]
We set $\mathcal{U}_j\coloneqq\{\tau\colon |\tau|<1, \bN(\tau)\ge j+1\}$.
From the definition of $\bN(\tau)$ it follows
that $\mathcal{U}_1=D=\{\tau\in\HH\colon |\tau|<1\}$.
Moreover, from the description of the greedy algorithm for
computing $\gamma_{\tau}$, we have $\bN(\tau)=\bN(-1/\tau)+1$ for $|\tau|<1$,
which leads to the recursion
$\mathcal{U}_{j+1} = \bigsqcup_{n\ne 0}ST^{2n}\mathcal{U}_{j}$.
This implies that
	\[\mathcal{U}_{j+1} = \bigsqcup_{n_1,\dots,n_j\ne 0}
	ST^{2n_1}\dots ST^{2n_j}D,\quad j\ge 0.\]
By the definition of $\mathcal{U}_j$, we have
	\begin{equation} \label{eq:ndecompose}
	\bN(\tau)=1+\sum_{j\ge 1} \mathds{1}_{\mathcal{U}_j}(\tau)
	=1+\sum_{\gamma\in\mathcal{C}}\mathds{1}_{\gamma(D)}(\tau),
	\quad |\tau|<1,
	\end{equation}
where $\mathcal{C}$ denotes the set of all elements
of the form $\gamma=ST^{2n_1}\dots ST^{2n_j}$ for $j\ge 0$
with $n_1,\dots,n_j\ne 0$.

Note that the preimage (in $\Gamma_{\theta}$) of $(c,d)$ under the
map $(\smat abcd)\xmapsto{r} (c,d)$ is the coset $T^{2\ZZ}(\smat abcd)$.
Any such coset contains exactly one element of the
form $ST^{2n_j}\dots ST^{2n_1}S^{\delta}$. A simple inductive argument shows
that if $(\smat abcd)\in \mathcal{C}$, then $|c|<|d|$, and
if $(\smat abcd)\in \mathcal{C}S$, then $|c|>|d|$. Thus $r$ provides
a bijection between $\mathcal{C}$ and the set $\mathcal{N}$
of all pairs $(c,d)$ with $|c|<|d|$, $\gcd(c,d)=1$, and $c\not\equiv d\Mod{2}$,
modulo the equivalence relation $(c,d)\sim (-c,-d)$.

Define $\Phi(y)\coloneqq \int_{-1}^{1}\bN(x+iy)dx$ and
for $\re s>1$ consider
	\[\Psi(s) \,\change{:=}\, \int_{0}^{\infty}(\Phi(y)-\change{2})y^{s-1}dy.\]
For $\gamma=(\smat abcd)\in\Gamma_{\theta}$,
$\gamma(D)$ is a half-disk with endpoints $\gamma(\pm1)$ and radius
$\frac{1}{2}|\gamma(1)-\gamma(-1)|=|c^2-d^2|^{-1}$.
An elementary calculation then shows that for $\gamma=(\smat abcd)$ we have
	\[\int_{-1}^{1}\int_{0}^{\infty}
	\mathds{1}_{\gamma(D)}(x+iy) y^{s-1}dydx
	= |c^2-d^2|^{-s-1} \frac{\Gamma(s/2)\Gamma(3/2)}{\Gamma((s+3)/2)}.\]
Combined with~\eqref{eq:ndecompose} and the above description of~$\mathcal{C}$
we get
	\[\Psi(s) = \frac{\Gamma(s/2)\Gamma(3/2)}{\Gamma((s+3)/2)}
	\sum_{(c,d)\in\mathcal{N}}|c^2-d^2|^{-s-1}
	=\frac{\Gamma(s/2)\Gamma(3/2)}{\Gamma((s+3)/2)}
	\frac{\zeta_{\rm{odd}}^2(s+1)}{\zeta_{\rm{odd}}(2s+2)},\]
where $\zeta_{\rm{odd}}(s)=\sum_{n\ge 1}(2n-1)^{-s}$.
To obtain this identity we have used the bijection $(c,d)\mapsto (c-d,c+d)$
between $\mathcal{N}$ and the set of all pairs of coprime
odd integers $(m,n)$ with opposite signs, again modulo $(m,n)\sim (-m,-n)$.
The function $\Psi(s)$ is meromorphic in the half-plane $\re s>-1/2$ with
the only singularity at $s=0$ with principal part
	\[\Psi(s) = \frac{4}{\pi^2 s^3}+\frac{c_2}{s^2}+\frac{c_1}{s}+O(1),
	\quad s\to 0\]
for some $c_1,c_2\in\RR$. Since $|\Psi(u+iv)|\ll_u |v|^{-5/4}$ for $u>-1/2$,
by a standard application of the inverse Mellin transform we obtain
	\[\Phi(y) = 2\pi^{-2}\log^2 y - c_2\log y + c_1 + \change{2}
	+ O_{\eps}(y^{1/2-\eps}), \quad y\to 0. \qedhere\]
\end{proof}
The constants $c_1,c_2$ are explicit, albeit complicated, for instance,
	\[c_2=\frac{12+4\log 2-24\log \pi-288\zeta'(-1)}{3\pi^2} = 1.180066...\, .\]

\section{The Fourier interpolation basis of Theorem~A revisited}
\label{sec:sqrt}
It was shown in \cite[Prop. 4]{RV} that~\eqref{eq:sqrt}
holds if one assumes $f(x), \widehat{f}(x) \ll (1+|x|)^{-13}$.
Using the estimates from Section~\ref{sec:cocycleestimates} we may now weaken
these constraints.
\begin{theorem}\label{thm:sqrt}
	Suppose $f$ is an even integrable function on~$\RR$ such that
	also $\widehat{f}$ is integrable.
	Suppose also that both $f$ and $\widehat{f}$ are absolutely continuous and that the two integrability conditions
	\begin{align} \label{eq:fdiff} \int_{-\infty}^{\infty} |f'(x)|(1+|x|)^{1/2} \log^3(e+|x|) dx & < \infty ,\\
	\int_{-\infty}^{\infty} |(\widehat{f})'(\xi)|(1+|\xi|)^{1/2} \log^3(e+|\xi|) d\xi & < \infty\,  \nonumber \end{align}
	hold. Then we may represent $f$ as in \eqref{eq:sqrt} for every real $x$, with the two series in \eqref{eq:sqrt} being in general only conditionally convergent.
\end{theorem}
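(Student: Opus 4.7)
The plan is to reprove~\eqref{eq:sqrt} via the Mellin-transform and contour-integration scheme of Section~\ref{sec:maintheoremproof}, now using the kernel $\Aa_{1/2}^{\pm}(w,s)$ directly (without the $\zeta^{*}$ factor, which made the analysis of Theorem~\ref{thm:zeta} more delicate). For the even function $f$ in the theorem let $F(w)$ be a suitable Mellin-type transform of~$f$ (e.g.\ $F(w) = 2\int_{0}^{\infty}f(y)y^{w-1}dy$, possibly modified by a gamma factor so that~$F$ satisfies a clean reflection $w\leftrightarrow 1-w$ coming from the Fourier inversion for even functions). Integrability of $f$ and $\widehat{f}$ makes $F$ analytic in a strip containing the critical line $\re w=1/2$. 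We then evaluate, in two different ways, the contour integral
\[
T(F)(s) \coloneqq \frac{1}{4\pi i}\int_{c-i\infty}^{c+i\infty}
\bigl[\Aa_{1/2}^{+}(w,s) + \Aa_{1/2}^{-}(w,s)\bigr] F(w)\,dw, \qquad 1-c<\re s<c, \quad c>1.
\]
Expanding the kernel as a Dirichlet series and interchanging sum and integral (after summation by parts, see below) yields the sum $\sum_{n\ge 0} f(\sqrt{n}) a_{n}(x)$ via the Mellin identification~\eqref{eq:reltosqrtn} of $\alpha_{n,1/2}^{\pm}(s/2)$ with the Mellin transform of~$b_{n}^{\pm}$. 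Shifting the contour across $\re w = s$ and $\re w = 1-s$ and invoking the functional equations~\eqref{eq:dirkerfeq} picks up the residue $f(x)$ and converts $F(w)$ into $F(1-w)$, which---by Fourier--Mellin---is the Mellin transform of~$\widehat{f}$, producing the second sum $\sum_{n\ge 0}\widehat{f}(\sqrt{n})\widehat{a}_{n}(x)$.

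The key step is to justify, under the weakened hypothesis~\eqref{eq:fdiff}, that the partial sums $\sum_{n\le N} f(\sqrt{n}) a_{n}(x)$ converge. Setting $A_{N}(x) \coloneqq \sum_{n\le N}a_{n}(x)$ and applying Abel summation gives
\[
\sum_{n\le N} f(\sqrt{n}) a_{n}(x)
= f(\sqrt{N}) A_{N}(x) - \int_{0}^{\sqrt{N}}f'(t) A_{\lfloor t^{2}\rfloor}(x)\,dt.
\]
At fixed~$x$, Proposition~\ref{prop:partsum} translated to the physical variable gives $A_{N}(x) = M(x,N) + O_{x}(N^{1/4}\log^{3}\!N)$, where the main term $M(x,N)$ is of order $N^{1/2}$ and encodes the contributions from the residues at $w=s,1-s$ in the contour shift. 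The main terms on the two sides of~\eqref{eq:sqrt} cancel with each other, while the error part contributes at most a multiple of $\int_{0}^{\infty}|f'(y)|(1+y)^{1/2}\log^{3}(e+y)\,dy$, which is finite by~\eqref{eq:fdiff}. An identical argument, using the hypothesis on~$\widehat{f}'$, controls the sum on the Fourier side.

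The principal obstacle is to make the contour integration rigorous with only weak decay hypotheses at infinity. For this we combine the Phragm\'en--Lindel\"of-type bound of Lemma~\ref{lem:Aest} (applied with $k=1/2$), which yields $|\Aa_{1/2}^{\pm}(u+iv,s)| \ll_{s}(1+|v|)^{(1+\eps-u)/2}$ in vertical strips, with a decay bound for $F(u+iv)$ obtained by one integration by parts: absolute continuity of~$f$ expresses $F(u+iv)$ in terms of a Mellin transform of~$f'$, and the weight $(1+y)^{1/2}\log^{3}(e+y)$ in~\eqref{eq:fdiff} is exactly calibrated so that the integrand $\Aa_{1/2}^{\pm}(w,s)F(w)$ decays like $|v|^{-1-\eps}$ along the two vertical lines $\re w=c$ and $\re w=1-c$. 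Once this uniform decay is established, the contour shift is routine, the Dirichlet-series evaluation of $T(F)$ combines with the summation-by-parts argument to give~\eqref{eq:sqrt}, and only conditional convergence of the resulting series need be asserted.
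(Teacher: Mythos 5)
There is a genuine gap: the direct Mellin/contour-integration scheme cannot be run on $f$ under the hypotheses of Theorem~\ref{thm:sqrt}. To expand the kernel as a Dirichlet series and integrate termwise you must place the contour on a line $\re w=c$ with $c>1$ where the series can be summed against the integral (cf.\ Lemma~\ref{lem:Aest} and Lemma~\ref{lem:Dest}), and after the contour shift you also need $F$ on $\re w=1-c$; so $F(w)$ must exist, be analytic and have decay in a strip of width greater than one. But the hypotheses give only $f,\widehat f\in L^1$ together with \eqref{eq:fdiff}, which yields decay no better than $f(y)=o\big(|y|^{-1/2}\log^{-3}|y|\big)$; hence the Mellin integral $2\int_0^\infty f(y)y^{w-1}dy$ converges only in a narrow strip around the symmetry line (roughly up to $\re w=1/2$ in the unrescaled variable) and simply does not exist on the lines you integrate over (take, e.g., $f(y)\sim y^{-1/2}\log^{-5}y$ at infinity, suitably smoothed). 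No integration by parts repairs this, since the failure is one of absolute convergence at infinity rather than of oscillation; this is precisely why Theorem~\ref{thm:zeta} requires $f\in\mathcal H_1$, i.e.\ analyticity in a strip of width exceeding one, and why Theorem~\ref{thm:sqrt} is interesting: it dispenses with any analyticity of $f$. Your claim that the weight $(1+y)^{1/2}\log^3(e+y)$ is calibrated to give $|v|^{-1-\eps}$ decay of $\Aa_{1/2}^{\pm}(w,s)F(w)$ on vertical lines is therefore unfounded; in the paper that weight is calibrated against the partial sums of the basis functions: Proposition~\ref{prop:part12} gives $\sum_{n\le N}a_n(x)=c(x)N^{1/2}+O(N^{1/4}\log^3N)$, and the weight is exactly what makes Abel summation against this bound go through. (There is also a normalization slip: for $k=1/2$ the symmetry of $\Aa_{1/2}^{\pm}(w,s)$ is $w\mapsto\tfrac12-w$; the paper rescales to $\Aa_{1/2}^{\pm}(w/2,s/2)$ to obtain $w\mapsto 1-w$ and the nodes $\sqrt n$.)

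The paper's proof avoids contour integration on $f$ altogether. It first proves conditional convergence of both series in \eqref{eq:sqrt} by Abel summation against the partial-sum asymptotics of Proposition~\ref{prop:part12} (this is where the modular estimates of Section~\ref{sec:estimates} enter, via the analogue of Proposition~\ref{prop:partsum} for $b_n^{\pm}(x)$), and then establishes the identity by applying the already-known interpolation formula to the Gaussian regularizations $\mathcal R_Mf$ and $\widehat{\mathcal R_Mf}$, which do have ample decay, and letting $M\to\infty$; the difference terms are again controlled by Abel summation against the same partial-sum bound. If you wish to keep a contour-theoretic flavour, you would have to apply your scheme to an analytic, rapidly decaying approximant of $f$ and then pass to the limit---which is, in substance, the paper's argument; as written, your direct approach breaks down at the very first step.
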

The proof of this theorem relies on the following proposition.

\begin{proposition}\label{prop:part12}
	For $x> 0$ and $N\ge 1$ we have
	\begin{equation} \label{eq:sumoh} \sum_{n\le N}b_{n}^{\pm}(x) =
	\pm 2b_{0}^{\pm}(x) N^{1/2} + O(N^{1/4}\log^3\!N) + O(\min (x^{-1/2}N^{1/4}, N^{1/2})) ,
	\end{equation}
	where $b_{n}^{\pm}$ are defined by~\eqref{eq:sqrtndef}
	and the implied constants in the \change{O} terms are absolute.
\end{proposition}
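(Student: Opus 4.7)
The plan is to adapt the proof of Proposition~\ref{prop:partsum} to the spatial side, working with the generating function
\[
B^{\pm}(\tau,x)\coloneqq\sum_{n\ge\nu_{\pm}} b_n^{\pm}(x)e^{\pi i n\tau},
\]
which by~\eqref{eq:sqrtndef} and Proposition~\ref{prop:modkereq} equals $F_{1/2}^{\pm}(\tau,\phi_x)$ for the test function $\phi_x(z)\coloneqq e^{\pi i z x^2}$. The relevant functional equation~\eqref{eq:fksfeq_general} now reads
\[
B^{\pm}(\tau,x)\mp(\tau/i)^{-1/2}B^{\pm}(-1/\tau,x)=e^{\pi i \tau x^2}\mp(\tau/i)^{-1/2}e^{-\pi i x^2/\tau},
\]
and the appearance of the transcendental term $e^{-\pi i x^2/\tau}$ in place of the simple power function $\tau^{s-k}$ of Proposition~\ref{prop:partsum} is the distinctive new feature and the source of the additional error $O(\min(x^{-1/2}N^{1/4},N^{1/2}))$.

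Following the Perron-type argument of Proposition~\ref{prop:partsum}, I would set $y=1/(N+1/2)$ and represent the partial sum as
\[
\sum_{n\le N}b_n^{\pm}(x)=\frac{i}{4}\int_{-1+iy}^{1+iy}B^{\pm}(\tau,x)\frac{e^{-(N+1/2)\pi i\tau}}{\sin(\pi\tau/2)}\,d\tau + (\text{constant adjustment}).
\]
Using $1/\sin(\pi\tau/2)=2/(\pi\tau)+O(\tau)$, the $O(\tau)$ piece is handled directly by Proposition~\ref{prop:fksbd} applied to $F_{1/2}^{\pm}(\tau,\phi_x)$. A key observation here is that $\phi_x$ is uniformly bounded by~$1$ on the domain~$\mathcal{D}$ \emph{independently} of~$x$ (since $|e^{\pi i z x^2}|\le 1$ and $|e^{-\pi i x^2/z}|\le 1$ throughout~$\mathcal{D}$), so the constant $C$ in Proposition~\ref{prop:fksbd} is absolute. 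Combined with the $\bH^{1/4}$- and $\bN^{3/4}$-integral estimates of Propositions~\ref{prop:estimpart} and~\ref{prop:iterations}, this yields the $O(N^{1/4}\log^3 N)$ contribution stated in~\eqref{eq:sumoh}.

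For the principal $2/(\pi\tau)$ piece, after invoking the functional equation and making the substitution $r=-i\tau$, I would split the integrand into three terms. The piece $\pm(\tau/i)^{-1/2}B^{\pm}(-1/\tau,x)/\tau$, once the leading constant Fourier term $b_0^{\pm}(x)$ is extracted, is evaluated in closed form by the inverse-Laplace identity~\eqref{eq:invlaplace} at $\alpha=1/2$ and produces the main term $\pm 2b_0^{\pm}(x)N^{1/2}$, with the non-constant Fourier terms decaying exponentially and absorbed into the error. The piece $e^{\pi i\tau x^2}/\tau$ contributes $O(1)$ by a direct estimate. The remaining piece, from $\mp(\tau/i)^{-1/2}e^{-\pi i x^2/\tau}/\tau$, reduces to an oscillatory integral of the shape
\[
I(x,N)=\int\frac{e^{\pi N r-\pi x^2/r}}{r^{3/2}}\,dr
\]
along the shifted vertical contour, and the trivial modulus bound $|e^{-\pi i x^2/\tau}|\le 1$ immediately gives $|I(x,N)|\ll N^{1/2}$.

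The hard part is deriving the sharper bound $|I(x,N)|\ll x^{-1/2}N^{1/4}$ needed for the $\min$ in~\eqref{eq:sumoh} to kick in when $x\gtrsim\sqrt{N}$. Here the phase $\pi Nr-\pi x^2/r$ has a stationary point at $r\sim x/\sqrt{N}$, and a classical oscillatory-integral bound of the Titchmarsh Lemma~4.3 type (precisely the tool used to control the $I_n$ in Proposition~\ref{prop:partsum}) should deliver a saving of $(xN^{-1/2})^{-1/2}$ over the trivial bound, giving $O(x^{-1/2}N^{1/4})$. The technical obstacle is to verify the monotonicity hypothesis on the ratio of amplitude to derivative of the phase along the chosen contour, and, if necessary, to deform the contour so that it passes near the stationary point without crossing any of the poles or singularities that would invalidate the functional-equation manipulation; taking the minimum of the trivial and stationary-phase bounds then closes the argument and yields the claimed $O(\min(x^{-1/2}N^{1/4},N^{1/2}))$ error.
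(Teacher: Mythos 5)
Your outline follows the paper's proof essentially step for step: identify $\sum_{n}b_n^{\pm}(x)e^{\pi i n\tau}$ with $F_{1/2}^{\pm}(\tau,\phi_x)$, use $|\phi_x|\le1$ on the relevant region to make the constant in Proposition~\ref{prop:fksbd} absolute, rerun the Perron-type contour argument of Proposition~\ref{prop:partsum}, and treat the new term $\int r^{-3/2}e^{\pi N r-\pi x^2/r}\,dr$ by the trivial bound $N^{1/2}$ versus a bound localized at the stationary scale $\im r\asymp xN^{-1/2}$. For that last integral your worry about deforming the contour is unnecessary: the paper stays on the vertical segment, where $|e^{-\pi x^2/r}|=e^{-\pi x^2\re(1/r)}$ already suppresses the range $|\im r|\ll xN^{-1/2}$, the trivial estimate over the window $|\im r|\asymp xN^{-1/2}$ gives amplitude $(xN^{-1/2})^{-3/2}$ times length $xN^{-1/2}$, i.e.\ exactly $x^{-1/2}N^{1/4}$ (your ``saving over the trivial bound'' phrasing is garbled, but the exponents are right), and Titchmarsh's first-derivative test handles $|\im r|\gg xN^{-1/2}$, the monotonicity check being a calculus exercise as in the bound for $J_n$ in Proposition~\ref{prop:partsum}.

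The one step that would fail as you state it is the treatment of the piece containing $B^{\pm}(-1/\tau,x)$: the non-constant Fourier terms decay exponentially only on the central part of the contour, $|\re\tau|\lesssim N^{-1/2}$, where $\im(-1/\tau)\gg1$; near the endpoints $\re\tau\approx\pm1$ one has $\im(-1/\tau)\asymp N^{-1}$ and termwise decay gives nothing. There one must, as in the final step of the proof of Proposition~\ref{prop:partsum}, dissect into the intervals $[\tfrac{1}{2n+1},\tfrac{1}{2n-1}]$ and use the uniform growth bound $\im(\tau)^{1/4}|F_{1/2}^{\pm}(\tau,\phi_x)|\ll \bH(\tau)^{1/4}+\bN(\tau)^{3/4}$ together with Proposition~\ref{prop:estimpart} and Corollary~\ref{cor:estinversions}. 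It is this outer region, not the $O(\tau)$ part of $1/\sin(\pi\tau/2)$ (which contributes at most $N^{1/4}\log^{2}N$), that produces the $O(N^{1/4}\log^3 N)$ term in \eqref{eq:sumoh}. With that repair, and the observation above that no contour deformation is needed, your argument coincides with the paper's.
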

The proposition remains true when $x=0$, but in that case it is better to use the two expressions
	\begin{equation} \label{eq:part0}
	\sum_{n=1}^{N}b_{n}^{-}(0)=0 \quad \text{and} \quad
	\sum_{n=1}^{N}b_{n}^{+}(0)=-2N^{1/2} + O(1), \end{equation}
which are obvious consequences of the facts that $b_n^{-}(0)=0$ for all
$n\ge 1$ and $b_n^{+}(0)=-2$ if $n$ is a square and otherwise $b_n^{+}(0)=0$.
\begin{proof}[Proof of Proposition~\ref{prop:part12}]
From~\cite[Prop.~2]{RV} it follows that that
$F(\tau)=\sum_{n\ge 0}b_n^{\pm}(x)e^{ \pi i n\tau}$ is of moderate growth and
	\begin{equation} \label{eq:feqsqrtn}
	F(\tau)\mp (\tau/i)^{-1/2}F(-1/\tau) =
	e^{\pi i x^2\tau}\mp (\tau/i)^{-1/2}e^{\pi i x^2(-1/\tau)}.
	\end{equation}
Therefore, we have $F(\tau)=F_{1/2}^{\pm}(\tau,\phi)$,
where $\phi(\tau)=e^{\pi i x^2\tau}$. Since $\phi(\tau)$ is bounded
for $|\tau|\ge 1/2$,
Proposition~\ref{prop:fksbd} implies that~\eqref{eq:fksgrowth} holds
for $k=1/2$ and hence
	\[|b_n^{\pm}(x)|\ll n^{1/4}(1+\log^2 n).\]

Then we repeat the argument from the proof of Proposition~\ref{prop:partsum},
the only difference being that after applying~\eqref{eq:feqsqrtn}
we get, along with the two first terms on the right-hand side of \eqref{eq:sumoh}, the term
	\begin{equation}\label{eq:extra}
	\frac{1}{2\pi i} \int_{-i+1/N}^{i+1/N}
	(e^{-\pi x^2r}\pm r^{-1/2}e^{-\pi x^2/r})
	\frac{e^{\pi Nr}}{r}dr\, .
	\end{equation}
The first term in the integrand in \eqref{eq:extra} yields trivially a contribution that
is $O(\log N)$, and this can be absorbed in the first \change{$O$} term
in~\eqref{eq:sumoh}.
We estimate the contribution from the second term in the integrand of~\eqref{eq:extra}
trivially if $x\le N^{-1/2}$ and see that we then get a term that is $O(N^{1/2})$. When
$x\ge N^{-1/2}$,
we estimate the contribution to the integral from the interval $|\im r|\le
\max(1,2xN^{-1/2})$ trivially and use again the bound for oscillatory integrals
from~\cite[Lem.~4.3]{T} to deal with the remaining part. We obtain then a term that is
$O(x^{-1/2} N^{1/4})$ which yields the latter \change{$O$} term in~\eqref{eq:sumoh}.
\end{proof}

\begin{proof}[Proof of Theorem~\ref{thm:sqrt}]
We begin by showing that the two series in \eqref{eq:sqrt} converge. By symmetry, it suffices to consider the first of them. By partial summation, we find that
	\begin{equation} \label{eq:partsum1} \sum_{n=K+1}^{N} f (\sqrt{n}) a_n(x)
	=  f(\sqrt{N}) A(N)  - f(\sqrt{K}) A(K)   -\int_K^{N} f' (\sqrt{y})\frac{1}{2\sqrt{y}}  A(y) dy,\end{equation}
where $A(N):=\sum_{n\le N} a_n(x).$ By Proposition~\ref{prop:part12} and the relation $a_n(x)=(b^+_n(x)+b_n^-(x))/2$, we have
	\begin{equation} \label{eq:Asum}
	A(y)= -b_0^{-}(0) y^{1/2}+O(y^{1/4}\log^3\!y)\end{equation}
when $y\neq 0$, but in view of \eqref{eq:part0}, this is also true for $y=0$. Since the first term on the right-hand side of \eqref{eq:Asum} is smooth, we may now use integration by parts in \eqref{eq:partsum1} along with a change of variables to deduce that
	\begin{align} \label{eq:three} \sum_{n=K+1}^{N}  f (\sqrt{n}) \widehat{a}_n(x)
	& \ll  |f (\sqrt{N})| N^{1/4} \log^3\!N
	      +|f (\sqrt{K})| K^{1/4} \log^3\!K  \\
	&  + \int_{\sqrt{K}}^{\sqrt{N}} | f (y)| dy
	    +\int_{\sqrt{K}}^{\sqrt{N}} | f'  (y)|\, y^{1/2} \log^3\!y\, dy.
	    \nonumber
	\end{align}
The first two terms on the right-hand side of \eqref{eq:three} tend to $0$ when $K, N\to \infty$ because
	\[ \widehat{f}(y)=-\int_{y}^\infty D\widehat{f}(\xi) d\xi
	\ll y^{-1/2}\log^{-3}\!y \int_y^{\infty} |D\widehat{f} (\xi)|(1+|\xi|)^{1/2} \log^3(e+|\xi|) d\xi .\]
Here the integral to the right tends to $0$ when $y\to \infty$ in view of \eqref{eq:fdiff}. The two integrals on the right-hand side of
\eqref{eq:three} also tend to $0$ when $K, N\to \infty$ by the respective integrability conditions on $f$ and $f'$.

We now turn to the proof that equality holds in \eqref{eq:sqrt}. To this end, we follow the proof of \cite[Prop. 4]{RV}. We write
	\[ \mathcal{R}_Mf (x)\coloneqq M^{1/2} e^{-\pi x^2/M} \int_{-\infty}^{\infty} f(x-y) e^{-\pi My^2} dy \]
and
	\[ \widehat{\mathcal{R}_Mf}(x)\coloneqq
	M^{1/2} \int_{-\infty}^{\infty} \widehat{f}(x-y) e^{-\pi (x-y)^2/M-\pi My^2} dy.\]
It is plain that $\mathcal{R}_Mf(x)\to f(x)$ when $M\to \infty$, and  hence it suffices to prove that
	\[ \sum_{n=0}^{\infty}\big(\mathcal{R}_Mf(\sqrt{n}) - f(\sqrt{n})\big) a_n(x) \to 0 \quad \text{and} \quad \sum_{n=0}^{\infty}\big(\widehat{\mathcal{R}_Mf}(\sqrt{n}) - \widehat{f}(\sqrt{n})\big) \widehat{a_n}(x) \to 0  \]
when $M\to \infty$.  We consider only the latter convergence, as
the two cases are almost identical. For convenience, we write
	\[  \widehat{\Delta_M f }(y):=\widehat{\mathcal{R}_Mf}(y) - \widehat{f}(y)\quad  \text{and} \quad Dg:=g'.\]
By the same argument of partial summation and integration as used in the first
part of the proof, we find that
	\begin{equation} \label{eq:two} \sum_{n=1}^{\infty} \widehat{\Delta_M f }(\sqrt{n}) \widehat{a_n}(x)
	\ll  \int_{1}^{\infty} |\widehat{\Delta_M f } (y)| dy
	+\int_{1}^{\infty} |D \widehat{\Delta_M f } (y)|\, (1+y)^{1/2} \log^3(e+y) dy. \end{equation}
A routine argument, using that $\widehat{f}$ is integrable, shows that the first integral on the right-hand side of \eqref{eq:two} tends to $0$ when $M\to \infty$. To deal with the second integral, we write
	\begin{align} \nonumber D \widehat{\Delta_M f } (y)  & =   M^{1/2} \int_{-\infty}^{\infty}
	\big( D\widehat{f}(y-v) - D\widehat{f}(y)\big) e^{-\pi Mv^2} dv \\
	& \nonumber + M^{1/2} \int_{-\infty}^{\infty}
	D\widehat{f}(y-v)\big(e^{-\pi(y-v)^2/M}-1\big) e^{-\pi Mv^2} dv \\
	& \label{eq:finalterm} + M^{1/2} \int_{-\infty}^{\infty}
	\widehat{f}(y-v)2\pi (y-v) M^{-1} e^{-\pi(y-v)^2/M} e^{-\pi Mv^2} dv\end{align}
and apply again routine arguments, along with our integrability assumptions on $\widehat{f}$ and $D\widehat{f}$, to show that each of the corresponding three terms tends to $0$ when $M\to \infty$. We give the details only for the last term in \eqref{eq:finalterm}. To this end, it suffices to observe that
		\[  (1+y)^{1/2} \log^3 (e+y) \ll |y-v|^{\delta} + |v|^{\delta}  \]
for some $\delta$, $1/2 < \delta < 1$, so that
	\begin{align*} \int_1^{\infty}  M^{1/2} \Big|\int_{-\infty}^{\infty}
	\widehat{f}(y-v) 2\pi & (y-v) M^{-1}  e^{-\pi(y-v)^2/M} e^{-\pi Mv^2} dv \Big| (1+y)^{1/2} \log^3 (e+y) dy \\
	& \ll \Big(M^{\delta/2} \int_{-\infty}^{\infty} e^{-\pi M v^2} dv + \int_{-\infty}^{\infty} |v|^{\delta} e^{-\pi M v^2} dv \Big) \| \widehat{f}\|_1, \end{align*}
and the latter term tends to $0$ when $M\to \infty$ since $\delta<1$.
\end{proof}

As was observed in the introduction, formula \eqref{eq:sqrt} reduces to the Poisson summation formula when $x=0$ in view of \eqref{eq:poisson}. Since we have the more precise formula \eqref{eq:part0} (instead of \eqref{eq:Asum}) in that case, the above proof therefore shows that the Poisson summation formula is valid when $f$, $f'$, $\widehat{f}$, $(\widehat{f})'$ are all integrable. Somewhat related and refined conditions can be found in the work of Kahane and Lemari\'{e}-Rieusset~\cite{KL}. See also Gr\"{o}chenig's paper~\cite{G}, where it is shown that the Poisson summation formula holds for functions in the Feichtinger algebra.

On the other hand, by a classical example of Katznelson \cite{Ka}, there exist functions $f$ with both $f$ and $\widehat{f}$ in $L^1$ for which the Poisson summation formula fails. This shows that we need indeed an additional assumption, beyond integrability of $f$ and $\widehat{f}$, for the Fourier interpolation formula \eqref{eq:sqrt} to hold for every real~$x$.


\begin{thebibliography}{BRSH}
	\bibitem{AKN} T.~Asai, M.~Kaneko, and H.~Ninomiya, \emph{Zeros of certain modular functions and an application}, Comment. Math. Univ.~St.~Paul. \textbf{46} (1) (1997), 93--101.
    \bibitem{BK} B.C.~Berndt and M.I.~Knopp,~\emph{Hecke's theory of modular forms and Dirichlet series}, World Scientific, Singapore, 2008.
    \bibitem{B} S.~Bochner, \emph{Some properties of modular relations}, Ann. of Math. \textbf{53} (2) (1951), 332--363.
    \bibitem{D} H.~Davenport, \emph{Multiplicative Number Theory}, Third edition, Revised and with a preface by Hugh L. Montgomery, Graduate Texts in Mathematics \textbf{74}, Springer-Verlag, New York, 2000.
    \bibitem{DIT} W.~Duke, \"O.~Imamo\=glu, and \'A.~T\'oth, \emph{Cycle integrals of the $j$-function and mock modular forms}, Ann. of Math. \textbf{173} (2) (2011), 947--981.
    \bibitem{Dy} F. Dyson, \emph{Birds and frogs}, Notices Amer. Math. Soc. \textbf{56} (2009), 212--223.
    \bibitem{E} M.~Eichler, \emph{The basis problem for modular forms and the traces of the Hecke operators}, in Modular Functions of One Variable I. LNM~320, 75--152, Berlin-Heidelberg-New York: Springer, 1973.
    \bibitem{G} K. Gr\"{o}chenig, \emph{An uncertainty principle related to the Poisson summation formula}, Studia Math. \textbf{121} (1996), 87--104.
    \bibitem{Gu} A. Guinand, \emph{Concordance and the harmonic analysis of sequences}, Acta Math. \textbf{101} (1959), 235--271.
    \bibitem{HB}  D.~R.~Heath-Brown, \emph{Hybrid bounds for Dirichlet $L$-functions}, Invent. Math. \textbf{47} (1978), 149--170.
    \bibitem{HMR} H.~Hedenmalm, A.~Montes-Rodriguez, \emph{Heisenberg uniqueness pairs and the Klein-Gordon equation}, Ann.~of~Math. \textbf{173} (2) (2011), 1507--1527.
    \bibitem{KL} J.-P. Kahane and P.-G.~Lemari\'{e}-Rieusset, \emph{Remarques sur la formule sommatoire de Poisson}, Studia Math. \textbf{109} (1994), 303--316.
    \bibitem{KRD} S. Karnik, J. Romberg, and M.~A.~Davenport, \emph{Improved bounds for the eigenvalues of prolate spheroidal wave functions and discrete prolate spheroidal sequences}, Appl. Comp. Harm. Anal.
\textbf{55} (2021), 97--128.
    \bibitem{Ka} Y. Katznelson, \emph{Une remarque concernant la formule de Poisson},
Studia Math. \textbf{29} (1967), 107--108.
    \bibitem{K3} M.~Knopp, \emph{Some new results on the Eichler cohomology of automorphic forms}. Bull. Amer. Math. Soc.~\textbf{80} (1974), 607--632.
    \bibitem{K4} M.~Knopp, \emph{On the growth of entire automorphic integrals}, Result. Math.~\textbf{8} (1985), 146--152.
    \bibitem{K5} M.~Knopp, \emph{Modular Functions in Analytic Number Theory}, Second edition, AMS Chelsea Publishing, 1993.
    \bibitem{K} M.~Knopp, \emph{On Dirichlet series satisfying Riemann's functional equation}, Invent. Math. \textbf{117} (1994), 361--372.
    \bibitem{K2} M.~Knopp, \emph{Hamburger's theorem on $\zeta(s)$ and the abundance principle for Dirichlet series with functional equations}, Number theory, Trends Math., Birkh\"{a}user, Basel, 2000, 201--216.
    \bibitem{Ku} A.~Kulikov, \emph{Fourier interpolation and time-frequency
    localization}, J. Fourier Anal. Appl. \textbf{27} (2021), Article No. 58.
    \bibitem{KS} P. Kurasov and P. Sarnak, \emph{Stable polynomials and crystalline measures},  J. Math. Phys. \textbf{61} (2020), no. 8, 083501, 13 pp.
    \bibitem{LO} N. Lev and A. Olevskii, \emph{Quasicrystals and Poisson's summation formula}, Invent. Math. \textbf{200} (2015), 585--606.
    \bibitem{Me} Y. Meyer, \emph{Measures with locally finite support and spectrum}, Proc. Natl. Acad. Sci. USA \textbf{113} (2016), 3152--3158.
    \bibitem{M} D.~Mumford, \emph{Tata Lectures on Theta: Jacobian theta functions and differential equations}, Progress in mathematics, Birkh\"auser, 1983.
    \bibitem{RV}  D.~Radchenko and M.~Viazovska, \emph{Fourier interpolation on the real line},
    Publ. Math. Inst. Hautes \'{E}tudes Sci. \textbf{129} (2019), 51--81.
    \bibitem{RS} K.  Ramachandra  and  A.  Sankaranarayanan, \emph{Notes  on  the  Riemann  zeta-function},  J.  Indian  Math.  Soc.(N.S.) \textbf{57} (1991), 67--77.
    \bibitem{S} D.~Slepian, \emph{Some comments on Fourier analysis, uncertainty and modeling}, SIAM Rev. \textbf{25} (1983), 379--393.
    \bibitem{Tf} E.~C.~Titchmarsh, \emph{The Theory of Functions}, Second edition, Oxford University Press, Oxford, 1939.
    \bibitem{T} E.~C.~Titchmarsh, \emph{The Theory of the Riemann Zeta-Function}, Second edition, Edited and with a preface by D.~R.~Heath-Brown, Oxford University Press, New York, 1986.
    \bibitem{W} A.~Weil, \emph{Sur les ``formules explicites'' de la th\'{e}orie des nombres premiers}, Comm. S\'{e}m. Math. Univ. Lund [Medd. Lunds Univ. Mat. Sem.] 1952, Tome Suppl\'{e}mentaire, pp. 252--265.
    \bibitem{Z2} D.~Zagier, \emph{Traces of singular moduli}, in Motives, Polylogarithms and Hodge Theory, Part I, International Press Lecture Series, 2002, 211--244.
    \bibitem{Z} D.~Zagier, \emph{The Mellin transform and other useful analytic techniques}, Appendix to E. Zeidler, Quantum Field Theory I: Basics in Mathematics and Physics. A Bridge Between Mathematicians and Physicists, Springer-Verlag, Berlin-Heidelberg-New York (2006), 305--323.
\end{thebibliography}
\end{document}